\newcommand{\set}[2]{\{#1 \ : \ #2\}}
\DeclareMathOperator{\End}{End}
\DeclareMathOperator{\ann}{ann}
\DeclareMathOperator{\im}{im}
\DeclareMathOperator{\cl}{cl}
\newtheorem{theorem}{Theorem}[section]
\newtheorem{proposition}[theorem]{Proposition}
\newtheorem{lemma}[theorem]{Lemma}
\theoremstyle{definition}
\newtheorem{definition}[theorem]{Definition}
\newtheorem{remark}[theorem]{Remark}
\newtheorem{question}[theorem]{Question}
\newenvironment{step}[1]{\par\smallskip\noindent\textbf{Step~#1.}}{\par\smallskip}
\begin{document}

\title[]{Compressed zero-divisor graphs of noncommutative rings}

\author{Alen \DJ uri\' c}
\address[A. \DJ uri\' c]{Faculty of Natural Sciences and Mathematics, University of Banja Luka, Mladena Stojanovi\' ca 2, 78000 Banja Luka, Bosnia and Herzegovina}
\email{alen.djuric@protonmail.com}

\author{Sara Jev\dj eni\' c}
\address[S. Jev\dj eni\' c]{Faculty of Natural Sciences and Mathematics, University of Banja Luka, Mladena Stojanovi\' ca 2, 78000 Banja Luka, Bosnia and Herzegovina}
\email{sarajevdjenic9@gmail.com}

\author{Nik Stopar}
\address[N. Stopar]{Faculty of Electrical Engineering, University of Ljubljana, Tr\v za\v ska cesta 25, 1000 Ljubljana, Slovenia}
\email{nik.stopar@fe.uni-lj.si}

\begin{abstract}
We extend the notion of the compressed zero-divisor graph $\varTheta(R)$ to noncommutative rings in a way that still induces a product preserving functor $\varTheta$ from the category of finite unital rings to the category of directed graphs.
For a finite field $F$, we investigate the properties of $\varTheta(M_n(F))$, the graph of the matrix ring over $F$, and give a purely graph-theoretic characterization of this graph when $n \neq 3$.
For $n \neq 2$ we prove that every graph automorphism of $\varTheta(M_n(F))$ is induced by a ring automorphism of $M_n(F)$.
We also show that for finite unital rings $R$ and $S$, where $S$ is semisimple and has no homomorphic image isomorphic to a field, if $\varTheta(R) \cong \varTheta(S)$, then $R \cong S$.
In particular, this holds if $S=M_n(F)$ with $n \neq 1$.
\end{abstract}

\maketitle

{\footnotesize \emph{Key Words:} zero-divisor, compressed zero-divisor graph, matrix ring

\emph{2010 Mathematics Subject Classification:}
16U99,
15B33,
05C25,
05C50
}

\section{Introduction}

The zero-divisor graph of a unital commutative ring $K$ was first introduced in 1988 by Beck \cite{Bec} as a tool to study the structure of the ring. His graph $G(K)$ is a simple graph with vertex set $K$ where two distinct vertices are adjacent if and only if their product is equal to $0$. Beck studied the chromatic number and the clique number of this graph.
Later Anderson and Livingston \cite{And-Liv} modified Beck's definition and defined a simple graph $\Gamma(K)$ by taking only nonzero zero-divisors of $K$ as the vertex set and leaving the definition of edges the same.
The properties of $\Gamma(K)$ were subsequently studied by several authors \cite{Akb-Mai-Yas, Akb-Moh, And-etal, And-Liv, Smi}. In particular, the isomorphism problem for such graphs has been considered in \cite{And-etal} and solved for finite reduced rings.
For more results on these graphs we refer the reader to a survey paper \cite{And-Axt-Sti}.

To further reduce the size of the graph, Mulay \cite{Mul} introduced the graph of equivalence classes of zero-divisors $\Gamma_E(K)$, which was later called compressed zero-divisor graph by Anderson and LaGrange \cite{And-LaG}.
The vertex set of $\Gamma_E(K)$ is the set of all equivalence classes of nonzero zero-divisors of $K$, where two zero-divisors are equivalent if and only they have the same annihilator in $K$.
Two equivalence classes $[x]$ and $[y]$ are adjacent in $\Gamma_E(K)$ if and only if $xy=0$. Thus $\Gamma_E(K)$ is obtained from $\Gamma(K)$ by compressing the vertices of $\Gamma(K)$ with the same neighbourhood in $\Gamma(K)$ into one vertex. Compressed zero-divisor graphs were extensively studied in \cite{And-LaG, And-LaG-2, Coy-etal, Spi-Wic}.

The advantage of $\Gamma_E(K)$ is that it may be much smaller than $\Gamma(K)$. In particular, $\Gamma_E(K)$ can be a finite graph even if $K$ is an infinite ring and $\Gamma(K)$ an infinite graph.
However, the disadvantage of this type of compression is that $\Gamma_E$ cannot be extended in a natural way to a functor from the category of  commutative unital rings to the category of graphs. There are two reasons for this.
Firstly, only zero-divisors are included in the definition of $\Gamma_E(K)$, however a homomorphic image of a zero-divisor can easily be invertible.
And secondly, the graph is compressed too much, since the homomorphic image of two equivalent zero-divisors need not be equivalent in general. 
To overcome this shortcoming, a new type of compressed zero-divisor graph $\varTheta(K)$ of a commutative unital ring was recently introduced by the authors in \cite{Dju-Jev-Sto}. We recall the definition of $\varTheta(K)$ for finite rings, the definition for infinite rings can be found in \cite[Definition~6.1]{Dju-Jev-Sto}.
The vertex set of $\varTheta(K)$ is the set of associatedness classes of all elements of $K$. Recall that two elements $a,b \in K$ are associated, denoted $a \sim b$, if $a=bu$ for some invertible element $u \in K$. Two associatedness classes $[a]_\sim$ and $[b]_\sim$ (not necessarily distinct) are adjacent in $\varTheta(K)$ if and only if $ab=0$.
It was proved in \cite{Dju-Jev-Sto} that this construction extends to a product preserving functor $\varTheta$ from the category of finite commutative unital rings to the category of graphs, and that the compression by associatedness relation is the optimal for this purpose. The graph structure of $\varTheta(K)$ was used to characterize important classes of finite commutative rings, namely, local rings and principal ideal rings.

There are also several other graphs constructions over commutative rings related to the zero-divisor graph that appear in the literature, see \cite{Beh-Bey, Dju-Jev-Obl-Sto, Red-3}.

In 2002 Redmond \cite{Red} extended the concept of zero-divisor graphs to noncommutative rings. For a general unital ring $R$ he defined a simple directed graph $\Gamma(R)$ and a simple undirected graph $\overline{\Gamma}(R)$ as follows.
The vertex set of both graphs is the set of all nonzero zero-divisors of $R$. In $\Gamma(R)$ there is a directed edge $x \to y$ if and only if $x \neq y$ and $xy=0$, while in $\overline{\Gamma}(R)$ there is an undirected edge $x - y$ if and only if $x \neq y$ and either $xy=0$ or $yx=0$.
Properties of these graphs have been studied in \cite{Akb-Moh-2, Akb-Moh-3, Boz-Pet, Ma-Wan-Zho, Red-2}. Most attention has been devoted to matrix rings either over fields or over general commutative unital rings, but other ring such as group rings, polynomial rings etc. have been considered as well.

Recently, Ma, Wang and Zhou \cite[Definition~2.5]{Ma-Wan-Zho} extended the notion of compressed zero-divisor graph $\Gamma_E(K)$ to noncommutative rings.
For a noncommutative ring $R$ graph $\Gamma_E(R)$ is a compression of $\Gamma(R)$ by an equivalence relation where two zero-divisors are equivalent if and only if they have the same left annihilator and the same right annihilator in $R$.
They described the connection between the automorphism groups of $\Gamma_E(M)$ and $\Gamma(M)$ where $M$ is the ring of $2 \times 2$ matrices over a finite field.

In this paper we extend the notion of the compressed zero-divisor graph $\varTheta(K)$ to noncommutative unital rings (see Definitions~\ref{def:theta} for finite rings and Definition~\ref{def:inf} for infinite rings) in order to obtain a functor $\varTheta$ from the category of finite unital rings to the category of directed graphs.
All the relevant definitions are contained in Section~\ref{sec:cat}, where we also show that functor $\varTheta$ enjoys similar properties as in the commutative case, namely, it preserves finite products in both directions (see Propositions~\ref{prop:noncommutative--preserves--products} and \ref{prop:noncommutative--preproduct}).

In Section~\ref{sec:matrices} we discuss the properties of the compressed zero-divisor graph $\varTheta(M_n(F))$ of the ring of $n \times n$ matrices over a finite field $F$. We show that it can be described in terms of pairs of vector subspaces of $F^n$, which makes it easier to work with and allows us to bring projective geometry into play later on.
We determine the number of vertices of $\varTheta(M_n(F))$, the sizes of their neighbourhoods, the existence of Hamiltonian paths and cycles in $\varTheta(M_n(F))$, the size of largest directed cliques, and the size of smallest dominating sets.
In Section~\ref{sec:characterization} we give a purely graph-theoretic characterization of graph $\varTheta(M_n(F))$ for $n\neq 3$, with no reference to the ring structure whatsoever (see Theorem~\ref{prop:noncommutative--staircase}). To the best of our knowledge this is the first result of this kind for matrix rings.

In Section~\ref{sec:structure} we study two standard questions; (1) Does $\varTheta(R) \cong \varTheta(S)$ imply $R \cong S$? and (2) Is any graph automorphism of $\varTheta(R)$ induced by a ring automorphism of $R$?
We answer question (1) positively when $S=M_n(F)$, a matrix ring over a finite field $F$, with $n \neq 1$, and more generally when $S$ is a semisimple finite ring with no homomorphic image isomorphic to a field (see Theorems~\ref{prop:matrix--same--graph--same--ring} and \ref{thm:iso-semisimple}).
We also give a positive answer to question (2) when $R=M_n(F)$ with $n \neq 2$ (see Theorem~\ref{thm:automorphisms}).
Lastly, in Section~\ref{sec:inf} we discuss the definition of $\varTheta(R)$ for infinite rings.

\section{Definition and categorial properties of $\varTheta(R)$}\label{sec:cat}

Throughout the paper, $F$ will denote a general finite field, and for a prime power $q>1$, $F_{q}$ will denote the unique finite field with $q$ elements. Unless specified otherwise, all rings will be finite unital rings, except in the last section, where we discuss the definition of $\varTheta(R)$ for infinite rings.
In this section we will extend some of the categorial results obtained in \cite{Dju-Jev-Sto} to noncommutative rings. We first need an appropriate equivalence relation to compress the zero divisor graph $\Gamma(R)$. We remark that our graphs will primarily be directed graphs.

Let $\sim$ denote a relation on a finite unital ring $R$, defined
by $a\sim b$ if and only if $a=bu=vb$ for some units $u$ and $v$.
Observe that on a commutative ring, the relation $\sim$ is just the associatedness relation, so there should be no confusion if we use the same notation for the relation as was used in the commutative setting in \cite{Dju-Jev-Sto}.
We claim that relation $\sim$ is an equivalence relation. Reflexivity is obvious.
The relation is symmetric since the equality $x=yu=vy$, where $u$ and $v$ are units, implies $y=xu^{-1}=v^{-1}x$. If $x\sim y$ and $y\sim z$, then $x=yu=vy$ and
$y=zs=tz$ for some units $u,v,s,t$, hence $x=z\left(su\right)=\left(vt\right)z$. This implies $x\sim z$, so the relation is also transitive.
The equivalence class of an element $a$ with respect to $\sim$ will be denoted simply by $[a]$, while the equivalence class with respect to any other equivalence relation $\approx$ will be denoted by $[a]_\approx$.

The following proposition shows that, at least on matrix rings over fields, the relation $\sim$ is the best possible equivalence relation to compress the zero-divisor graph because it will compress the graph as much as possible in a way that edges will be well-defined by zero products.

\begin{proposition}\label{prop:noncommutative--equivalence}
Let $\approx$ be an equivalence
relation on the ring $R=M_{n}\left(F\right)$ such that for all $x,y,z,w\in R$,
$x\approx z$, $y\approx w$ and $xy=0$ imply $zw=0$. Then for
any $a,b\in R$, $a\approx b$ implies $a\sim b$.
\end{proposition}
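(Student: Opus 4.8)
The plan is to show that the equivalence relation $\approx$ cannot distinguish matrices of the same rank in a way that contradicts the edge-compatibility hypothesis, and then to use the fact that on $M_n(F)$ two matrices are $\sim$-equivalent precisely when they have the same rank together with matching left/right behaviour — in fact, since $GL_n(F)$ acts transitively on matrices of a fixed rank from both sides, $a \sim b$ on $M_n(F)$ is equivalent to $\operatorname{rank}(a) = \operatorname{rank}(b)$. So the real content is: \emph{if $a \approx b$ then $a$ and $b$ have the same rank.} I would first record this reduction as the opening move of the proof.

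Next I would exploit the hypothesis on $\approx$ to show it is preserved under multiplication by units on both sides. Suppose $a \approx b$; taking $x = z = u$ (a unit), $y = a$, $w = b$ is not directly useful since $ua \ne 0$ in general, so instead the right approach is: the hypothesis, applied with the roles symmetric, forces $\approx$ to be a congruence-like relation with respect to the annihilator structure. Concretely, if $a \approx b$ and $ca = 0$ for some $c$, I want to deduce $cb = 0$; this follows by applying the hypothesis with $x = c \approx c = z$ and $y = a \approx b = w$. Hence $a \approx b$ implies $\ell(a) = \ell(b)$ (equal left annihilators), and symmetrically — applying the hypothesis on the other side, with $x = a \approx b$, $y = d \approx d$ whenever $ad = 0$ — that $r(a) = r(b)$ (equal right annihilators). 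But on $M_n(F)$, having the same left annihilator already pins down the column space, and having the same right annihilator pins down the row space, so in particular $\operatorname{rank}(a) = \dim(\text{column space of }a) = \operatorname{rank}(b)$. This is the heart of the argument.

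The last step is to convert "same rank" into "$a \sim b$". Given $\operatorname{rank}(a) = \operatorname{rank}(b) = r$, by the standard normal form there are $P, Q, P', Q' \in GL_n(F)$ with $PaQ = P'bQ' = E_r$, the matrix with an $r \times r$ identity block in the top-left corner and zeros elsewhere; hence $a = P^{-1}(P'bQ')Q^{-1} = (P^{-1}P')\,b\,(Q'Q^{-1})$, which realizes $a$ as $b$ pre- and post-multiplied by units, i.e. $a \sim b$. (Strictly, $\sim$ requires a single unit on the left and a single unit on the right giving the same element; this is exactly what the displayed equation provides.) So the chain is $a \approx b \Rightarrow \ell(a) = \ell(b)$ and $r(a) = r(b) \Rightarrow \operatorname{rank}(a) = \operatorname{rank}(b) \Rightarrow a \sim b$.

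The step I expect to be the main obstacle is the middle one: extracting from the single hypothesis "$x \approx z$, $y \approx w$, $xy = 0$ $\Rightarrow$ $zw = 0$" the conclusion that $a \approx b$ forces equality of \emph{both} one-sided annihilators. The left-annihilator direction is immediate as sketched, but the right-annihilator direction needs the hypothesis applied with $a, b$ in the first coordinate; one must be careful that the implication is genuinely usable in that direction (it is, since the roles of the two pairs in the hypothesis are not assumed symmetric but the hypothesis is stated for \emph{all} $x, y, z, w$, so we may put the $\approx$-related pair either first or second). A secondary subtlety is confirming that equal left annihilator in $M_n(F)$ forces equal rank — this is because $\ell(a)$ is determined by, and determines, the column space of $a$, a fact I would state without belaboring, citing the correspondence between matrices and their column spaces used later in Section~\ref{sec:matrices}.
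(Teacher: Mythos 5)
There is a genuine error, and it sits in your opening reduction and your final step. You claim that on $M_n(F)$ the relation $a\sim b$ is equivalent to $\operatorname{rank}(a)=\operatorname{rank}(b)$, "since $GL_n(F)$ acts transitively on matrices of a fixed rank from both sides." That is false for the relation $\sim$ as the paper defines it: $a\sim b$ means there exist units $u,v$ with $a=bu$ \emph{and} $a=vb$, i.e.\ $aR=bR$ and $Ra=Rb$, i.e.\ equal column space \emph{and} equal kernel — not merely equal rank. For instance $E_{11}$ and $E_{22}$ in $M_2(F)$ have the same rank but are not $\sim$-equivalent, since $E_{11}u$ always has column space $Fe_1\neq Fe_2$. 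Your closing normal-form argument produces $a=(P^{-1}P')\,b\,(Q'Q^{-1})$, which is of the form $a=vbu$ with units on \emph{both} sides of $b$ in a single equation; your parenthetical asserting that this "is exactly what the displayed equation provides" misreads the definition, which demands two separate factorizations $a=bu$ and $a=vb$. So the chain "equal annihilators $\Rightarrow$ equal rank $\Rightarrow a\sim b$" breaks at the second arrow, and in fact you discard precisely the information you need when you pass from equal annihilators to rank.

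The good news is that your middle step is correct and is all you need. From $a\approx b$ you correctly get $\ann_\ell(a)=\ann_\ell(b)$ and $\ann_r(a)=\ann_r(b)$ (using that the hypothesis quantifies over all four entries and that $\approx$ is symmetric). In $M_n(F)$, equal left annihilators force equal column spaces, hence $aR=bR$, hence $a=bu$ for an invertible $u$; equal right annihilators force equal kernels, hence $Ra=Rb$, hence $a=vb$ for an invertible $v$; together this is exactly $a\sim b$, with no detour through rank or the normal form. This corrected route differs mildly from the paper's: the paper uses unit-regularity of $M_n(F)$ to write $a=es$ with $e$ idempotent and $s$ a unit, applies the hypothesis to the single annihilating element $1-e$ to get $b=as^{-1}b\in aR$, and concludes $aR=bR$ by symmetry, whereas you use the full annihilators; both are fine, but the final conversion must go through "same image and same kernel," not "same rank."
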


\begin{proof}
Let $a\approx b$. Since the ring $M_{n}\left(F\right)$ is unit-regular,
element $a$ can be written as $a=es$, where $e^{2}=e$ and $s$
is a unit. Then $1-e\approx1-e$, $a\approx b$ and $\left(1-e\right)a=0$
imply $\left(1-e\right)b=0$, so $b=eb=as^{-1}b\in aR$. Since $\approx$
is symmetric, we also have $a\in bR$, hence $aR=bR$. This means
that matrices $a$ and $b$ have the same image, so there is an invertible
matrix $u$ such that $a=bu$. Similarly, there is an invertible matrix
$v$, such that $a=vb$.
\end{proof}

Next we show that if we want to obtain a functor, the relation $\sim$ is the best possible also on products of finite unital algebras over fields. This partially generalizes \cite[Proposition~3.1]{Dju-Jev-Sto}. Before we can state the result, we need the following auxiliary proposition. We denote by $\ann_\ell a$ and $\ann_r a$ the left and right annihilator of $a$ respectively.

\begin{proposition}\label{prop:aux}
Let $R$ be a finite unital ring and $a, b \in R$. Then $aR=bR$ if and only if $a=bu$ for some invertible element $u \in R$.
\end{proposition}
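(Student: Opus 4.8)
The backward implication is immediate, as $aR=buR=bR$ whenever $u$ is invertible. For the forward implication my plan is to work in the category of right $R$-modules, writing $R_R$ for $R$ regarded as a right module over itself. Each $c\in R$ acts on $R_R$ as the endomorphism $L_c\colon x\mapsto cx$, and $c\mapsto L_c$ identifies $R$ with $\End(R_R)$ in such a way that units correspond exactly to module automorphisms and composition corresponds to multiplication in $R$. Hence $a=bu$ for some unit $u$ if and only if there is an automorphism $\psi$ of $R_R$ with $L_b\circ\psi=L_a$: in that case $\psi=L_u$ for a unit $u$, and evaluating $L_bL_u=L_a$ at $1$ gives $a=bu$.

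So I would assume $aR=bR$, put $M=aR=bR$, and view $L_a$ and $L_b$ as surjections $\alpha,\beta\colon R_R\twoheadrightarrow M$. As $R_R$ is free, hence projective, and $\beta$ is onto, $\alpha$ factors through $\beta$: there is $\psi_0=L_{u_0}\in\End(R_R)$ with $\beta\psi_0=\alpha$, which already yields $a=bu_0$ on evaluating at $1$. The only trouble is that $u_0$ need not be a unit. However, surjectivity of $\beta\psi_0$ gives $u_0R+\ann_r b=R$, and $b(u_0+z)=bu_0=a$ for every $z\in\ann_r b$, so $u_0$ may be replaced by any element of the coset $u_0+\ann_r b$. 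Thus the whole proof reduces to showing that this coset contains a unit of $R$.

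This last step is the crux, and it is exactly where finiteness of $R$ is essential --- for a general ring, $aR=bR$ does not force $a$ and $b$ to be right associates. There are two natural ways to settle it. The soft route: a finite ring is semilocal, hence has stable range one, and a routine induction then yields an element of $\ann_r b$ whose sum with $u_0$ is right invertible, hence invertible since $R$ is finite. The concrete route, which avoids quoting stable range one: rewrite $aR=bR$ as $a=bs$, $b=at$; since the sequences $(ts)^k$ and $(st)^k$ are eventually periodic in the finite ring $R$, one can replace $s$ and $t$ by suitable products so that $e:=ts$ and $f:=st$ become idempotents, still satisfying $a=bs$, $b=at$, and hence $ae=a$, $bf=b$. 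Then $x\mapsto sx$ and $x\mapsto tx$ restrict to mutually inverse right-module isomorphisms $eR\cong fR$, their composites acting on $eR$, resp.\ $fR$, as multiplication by the idempotent $e$, resp.\ $f$, that is, as the identity. Combining the Peirce decompositions $R=eR\oplus(1-e)R=fR\oplus(1-f)R$ with $eR\cong fR$, cancellation of finite-length modules (Krull--Schmidt) gives $(1-e)R\cong(1-f)R$; gluing this isomorphism to $x\mapsto sx$ along the two decompositions produces an automorphism $\psi$ of $R_R$, and a short computation using $ae=a$ and $b(1-f)=0$ verifies $L_b\circ\psi=L_a$, which by the first paragraph completes the proof.
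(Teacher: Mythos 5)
Your proposal is correct, and its main skeleton is the same as the paper's: reduce to the observation that $b(u_0+z)=a$ for every $z\in\ann_r b$, note that $u_0R+\ann_r b=R$, and then find a unit in the coset $u_0+\ann_r b$. Your ``soft route'' for this last step is exactly the paper's proof, which invokes Bass' theorem (a semilocal ring has stable range one, so a coset $x+I$ with $xR+I=R$ contains a unit); incidentally, no induction is needed there, since the theorem as cited already applies to right ideals, and the initial projective-lifting argument is also an unnecessary detour, as $a=bx$ follows at once from $aR=bR$. Your ``concrete route'' is a genuinely different and self-contained alternative: passing to idempotent powers of $ts$ and $st$ (available by finiteness), producing $e=t's'$, $f=s't'$ with $a e=a$, $bf=b$ and $eR\cong fR$, and then using Peirce decomposition together with Krull--Schmidt cancellation for the finite-length module $R_R$ to build an automorphism $\psi$ of $R_R$ with $L_b\circ\psi=L_a$; all the verifications you indicate do go through. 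What each approach buys: the paper's argument is shorter but rests on a nontrivial citation (Bass' theorem), while your concrete route replaces that citation by elementary semigroup and module-theoretic facts (idempotent powers, Peirce decomposition, Krull--Schmidt), at the cost of more bookkeeping; both use finiteness only through semilocality, respectively finite length and eventual periodicity, so both in fact work for (semi)artinian situations beyond the finite case.
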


\proof The if part of the claim is obvious. To prove the only if part, assume that $aR=bR$. Choose $x,y \in R$ such that $a=bx$ and $b=ay$.
Then $b(1-xy)=0$, so $1-xy$ is an element of $\ann_r b$, which is a right ideal of $R$. This implies that $1 \in xR+\ann_r b$, hence $xR+\ann_r b=R$.
Every finite unital ring is (left) artinian and hence semilocal, so by Bass' Theorem (see \cite[Theorem~20.9]{Lam}), the coset $x+\ann_r b$ contains an invertible element of $R$, say $u=x+z$, where $z \in \ann_r b$. Multiplying from the left by $b$ we obtain $bu=bx=a$. \endproof

In what follows, the category of finite unital rings and unital ring homomorphisms
will be denoted by $\mathbf{FinRing}$ and the category of directed
graphs and graph homomorphisms will be denoted by $\mathbf{Digraph}$.

\begin{proposition}\label{prop:best_possible}
For every $R\in\mathrm{obj}\mathbf{FinRing}$ let $\approx_{R}$ be
an equivalence relation on $R$, such that the family $\left\{ \approx_{R}\right\} _{R\in\mathrm{obj}\mathbf{FinRing}}$
induces a well-defined functor $T:\mathbf{FinRing}\to\mathbf{Digraph}$
in the following way.
\begin{enumerate}
\item \label{enu:compression-1} For $R\in\mathrm{obj}\mathbf{FinRing}$, the
vertices of $T\left(R\right)$ are equivalence classes of $\approx_{R}$,
and there is an edge $\left[a\right]_{\approx_{R}}\rightarrow\left[b\right]_{\approx_{R}}$
if and only if $ab=0$.
\item\label{enu:arrow-1} For $f\in\mathbf{FinRing}$$\left(R,S\right)$,
we have $T\left(f\right)\left(\left[a\right]_{\approx_{R}}\right)=\left[f\left(a\right)\right]_{\approx_{S}}$.
\end{enumerate}
If $A\in\mathrm{obj}\mathbf{FinRing}$ is a direct product of finite
unital algebras over possibly different fields, then $a\approx_{A}b$
implies $a\sim b$.
\end{proposition}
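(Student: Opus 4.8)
The plan is to reduce the product case to the single-algebra case handled (for matrix rings) by Proposition~\ref{prop:noncommutative--equivalence}, and then to push information between the factors of the direct product using the functoriality of $T$ applied to projections and inclusions. Write $A = A_1 \times \cdots \times A_k$, where each $A_i$ is a finite unital algebra over a field $F_i$. Let $\pi_i \colon A \to A_i$ be the projection and $\iota_i \colon A_i \to A$ the inclusion; these are morphisms in $\mathbf{FinRing}$ (the $\iota_i$ are nonunital as maps but one should be careful here — if the category insists on unital homomorphisms, replace $\iota_i$ by a suitable argument through $A$ itself, see below). The key observation is that functoriality forces the relation $\approx_A$ to be compatible with $\approx_{A_i}$ via $\pi_i$: if $a \approx_A b$ then $\pi_i(a) \approx_{A_i} \pi_i(b)$ for every $i$. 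So it suffices to show (1) that each $\approx_{A_i}$ refines $\sim$ on $A_i$, and (2) that $a \sim_{A_i} b$ in every coordinate implies $a \sim_A b$ in $A$.

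For step (1), the restriction of the family $\{\approx_R\}$ to algebras over a fixed field is governed by the same edge condition \eqref{enu:compression-1}, and the argument of Proposition~\ref{prop:noncommutative--equivalence} is what we want; but that proposition was stated only for full matrix rings $M_n(F)$, whereas $A_i$ is an arbitrary finite $F_i$-algebra. The honest route is: first show that for \emph{any} finite unital ring $R$, the hypothesis that $\{\approx_R\}$ induces a functor $T$ as in \eqref{enu:compression-1}--\eqref{enu:arrow-1} forces $a \approx_R b \implies aR = bR$ and $Ra = Rb$, hence $a \sim b$ by Proposition~\ref{prop:aux} (and its left-right mirror). To get $aR = bR$: since $R$ is finite, $a = es$ with $e$ idempotent and $s$ a unit is not available in general, but one can instead use that $\pi \colon R \to R/Ra$ might not be unital; the clean trick that does work in arbitrary finite unital rings is to exploit the morphism $R \to R \times R$, $x \mapsto (x,x)$, together with the structure of $R$ as the image ring — or, more simply, to repeat the idempotent argument using the fact that $aR$ is generated by an idempotent whenever... this is exactly where I expect the main obstacle.

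\textbf{Main obstacle.} The single-ring statement "$\approx_R$ refines $\sim$" need not hold for an arbitrary finite ring $R$ (e.g.\ if $R$ has a homomorphic image that is a field, the associatedness classes can be merged — this is precisely why fields are excluded in the main theorems). What \emph{is} true, and what I would prove, is the following: for a finite unital \emph{algebra} $A_i$ over a field, writing $A_i \cong$ (by Wedderburn on its semisimple part, or just directly) we can find enough idempotents and units so that the argument of Proposition~\ref{prop:noncommutative--equivalence} goes through — specifically, if $a \in A_i$ then $a = es$ with $e^2 = e$, $s$ a unit, provided $A_i$ is \emph{unit-regular}, and finite algebras over fields need not be unit-regular. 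So the real content is to isolate what minimal property of the factor is used: one needs, for each $a$, a unit $s$ and an idempotent $e$ with $aA_i = eA_i$ and $(1-e)a = 0$; this holds as soon as the right ideal $aA_i$ is a direct summand, i.e.\ $A_i$ is right hereditary / the relevant module is projective, which may fail. I would therefore argue more carefully: use functoriality with the \emph{unital} maps $A \to A/\mathfrak{m}$ for two-sided ideals $\mathfrak{m}$ and with product maps to reduce to the case where $A$ is a single simple algebra $M_n(D)$ over a division ring; over a finite field $D = F$ is a field so $M_n(F)$ is unit-regular and Proposition~\ref{prop:noncommutative--equivalence} applies directly, and the "algebra over a field" hypothesis is used exactly to guarantee $D$ is finite hence commutative.

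Once step (1) is in hand, step (2) is routine: if $a = (a_1,\dots,a_k)$, $b = (b_1,\dots,b_k)$ with $a_i = b_i u_i = v_i b_i$ for units $u_i \in A_i$, $v_i \in A_i$, then $u = (u_1,\dots,u_k)$ and $v = (v_1,\dots,v_k)$ are units in $A$ and $a = bu = vb$, so $a \sim b$. To transfer the coordinatewise information I would apply $T(\pi_i)$ to the edge $[a]_{\approx_A} \to [c]_{\approx_A}$ and back, and invoke the fact (from step (1) applied inside $A_i$, now legitimately a matrix ring over a finite field after a further Wedderburn reduction of $A_i$ itself) that $\approx_{A_i}$ refines $\sim$ on $A_i$. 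Assembling: $a \approx_A b \Rightarrow \pi_i(a) \approx_{A_i} \pi_i(b) \Rightarrow \pi_i(a) \sim \pi_i(b)$ for all $i \Rightarrow a \sim b$, which is the claim. I expect the write-up to spend most of its length on the careful reduction to simple matrix factors over finite fields in step (1); steps (2) and the final assembly are short.
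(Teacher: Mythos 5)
There is a genuine gap, and it sits exactly where you flagged the ``main obstacle.'' Your outer structure (apply $T(\pi_i)$ to the coordinate projections, prove the claim for each factor $A_i$, then assemble coordinatewise units into a unit of $A$) is the same as the paper's, and that part is fine. But your proposed resolution for a single factor --- reduce via the unital quotient maps $A_i \to A_i/\mathfrak{m}$ and a ``Wedderburn reduction'' to the case of a simple algebra $M_n(D)$ --- cannot work. Functoriality only pushes the relation \emph{forward}: from $a \approx_{A_i} b$ you learn that the images of $a$ and $b$ are $\approx$-related, hence associated, in each simple quotient. That tells you nothing about associatedness in $A_i$ itself; e.g.\ in $A_i = F_q[x]/(x^2)$ every element of the maximal ideal maps to $0$ in the unique simple quotient, so $x$ and $0$ become indistinguishable there even though $x \not\sim 0$. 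A general finite algebra over a field has a nonzero radical and does not embed into the product of its simple quotients, so there is no legitimate ``reduction to the simple case'' along quotient maps. (Also, your aside that the single-ring statement ``need not hold'' for arbitrary finite rings because of field images is not supported: the paper leaves the general finite-ring case open in Question~\ref{que:all}, and for a field the edge-well-definedness in condition \ref{enu:compression-1} already prevents merging $0$ with a unit.)

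The idea you are missing is to go \emph{up} rather than down: use the left regular representation $L\colon A_i \to \End_{F_i}(A_i)$, which is a unital ring homomorphism into a ring isomorphic to a full matrix algebra over $F_i$ (this is where the hypothesis ``algebra over a field'' is really used --- it makes $A_i$ a finite-dimensional $F_i$-vector space). Functoriality applied to $L$ gives $L_a \approx L_b$ in $\End_{F_i}(A_i)$, where Proposition~\ref{prop:noncommutative--equivalence} does apply since that ring \emph{is} unit-regular; so $L_a = L_b \circ U = V \circ L_b$ for invertible $U,V$. Evaluating at $1 \in A_i$ yields $a \in bA_i$, and by symmetry of $\approx$ also $b \in aA_i$, hence $aA_i = bA_i$; Proposition~\ref{prop:aux} (Bass' theorem) then produces an honest unit $u$ with $a = bu$. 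Repeating with the right regular representation into $\End_{F_i}(A_i)^{\mathrm{op}}$ gives $A_i a = A_i b$ and, by the left-sided version of Proposition~\ref{prop:aux}, a unit $v$ with $a = vb$. With this in place of your step (1), the rest of your argument (projections and coordinatewise assembly) goes through as written.
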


\begin{proof}
First, suppose $A$ is an algebra over a field $F$. Let $\End_{F}\left(A\right)$
denote the algebra of all $F$-linear maps $A \to A$. Let $L:A \to \End_{F}\left(A\right)$
be the left regular representation of $A$, i.e. $L\left(r\right)=L_{r}$,
where $L_{r}$ denotes left multiplication by $r$. Suppose $a\approx_{A}b$.
Then $L_{a}\approx_{\End_{F}\left(A\right)}L_{b}$ by \ref{enu:arrow-1}. Since
$A$ is finite dimensional vector space over $F$, the algebra $\End_{F}\left(A\right)$
is isomorphic to a matrix algebra. By \ref{enu:compression-1} the conditions of Proposition~\ref{prop:noncommutative--equivalence} are satisfied. Using \ref{enu:arrow-1}, we thus conclude that $L_{a}\sim L_{b}$, i.e. $L_{a}=L_{b}\circ U=V\circ L_{b}$
for some invertible $U,V\in\End_{F}\left(A\right)$. Applying
these maps to $1\in A$, we get $a=b\cdot U\left(1\right)=V\left(b\right)$.
This implies $a\in bA$. Since $\approx_{A}$ is symmetric, we also
have $b\in aA$. Therefore, $aA=bA$ and Proposition~\ref{prop:aux} implies $a=bu$ for some invertible element $u$. Similarly, using right regular
representation $R:A \to \End_{F}\left(A\right)^{\textrm{op}}$,
where $\End_{F}\left(A\right)^{\textrm{op}}$ denotes the
opposite algebra of $\End_{F}\left(A\right)$, we obtain $Aa=Ab$. By a left-hand sided version of Proposition~\ref{prop:aux} we get $a=vb$ for some invertible element $v$.
This shows that $a \sim b$.

Now, suppose $A\cong\prod_{i=1}^{n}A_{i}$, where each $A_{i}$ is
an algebra over some field. Let $a=\left(a_{i}\right)_{i}$ and $b=\left(b_{i}\right)_{i}$
be elements of $A$, such that $a\approx_{A}b$. Applying \ref{enu:arrow-1}
to canonical projection $A \to A_{i}$, we get $a_{i}\approx_{A_{i}}b_{i}$,
hence $a_{i}\sim b_{i}$ by the first part of the proof. It is easy
to see that this implies $a\sim b$.
\end{proof}

Regarding Proposition~\ref{prop:best_possible} we pose the following open question.

\begin{question}\label{que:all}
Does the conclusion of Proposition~\ref{prop:best_possible} hold for any finite unital ring $A$?
\end{question}

To answer Question~\ref{que:all} it would suffice to consider the endomorphism ring $\End_\mathbb{Z}(H_p)$ of a finite abelian $p$-group $H_p$ (where $p$ is a prime), i.e. a group of the form
\[ H_p=\mathbb{Z}/{p^{k_1}}\mathbb{Z}\times\mathbb{Z}/{p^{k_2}}\mathbb{Z}\times\ldots\times\mathbb{Z}/{p^{k_m}}\mathbb{Z} \]
for some positive integers $k_1\geq k_2 \geq \ldots \geq k_m$. Indeed, if $A$ is an arbitrary finite unital ring, then by the classification of finite abelian groups, the additive group of $A$ is isomorphic to a direct product
\[ H_{p_1}\times H_{p_2}\times\ldots\times H_{p_n}, \]
where $p_1,p_2,\ldots,p_n$ are distinct prime numbers and each $H_{p_i}$ is a finite abelian $p_i$-group. Since $p_i$ are distinct, it follows that
\[ \End_\mathbb{Z}(A) \cong \End_\mathbb{Z}(H_{p_1}) \times \End_\mathbb{Z}(H_{p_2}) \times \ldots \times \End_\mathbb{Z}(H_{p_n}). \]
If every $\End_\mathbb{Z}(H_{p_i})$ satisfies the conclusion of Proposition~\ref{prop:best_possible}, then so does their direct product $\End_\mathbb{Z}(A)$ (see the end of the proof of Proposition~\ref{prop:best_possible}). Hence, a similar argument as in the proof of Proposition~\ref{prop:best_possible} would show that $A$ also satisfies the conclusion of Proposition~\ref{prop:best_possible}.

We remark that the structure of $\End_\mathbb{Z}(H_p)$ and the invertible elements of $\End_\mathbb{Z}(H_p)$ were described by Hillar and Rhea in \cite{Hil-Rhe}.

Propositions~\ref{prop:noncommutative--equivalence} and \ref{prop:best_possible} motivate us to define a compressed zero-divisor graph of finite unital rings in
the following way.

\begin{definition}\label{def:theta}
For a finite unital ring $R$, $\varTheta\left(R\right)$ is a directed
graph whose vertices are equivalence classes $\left[a\right]=\left[a\right]_{\sim}$ of elements of $R$, where $a \sim b$ if and only if $a=bu=vb$ for some units $u$ and $v$, and there is a directed edge $\left[x\right] \to \left[y\right]$ in $\varTheta(R)$ ($[x]$ and $[y]$ not necessarily distinct) if and only if $xy=0$.
We also define $\overline{\varTheta}\left(R\right)$ to be an undirected graph whose vertices are equivalence classes $\left[a\right]=\left[a\right]_{\sim}$ of elements of $R$
and there is an edge joining $\left[x\right]$ and $\left[y\right]$
(not necessarily distinct) if and only if either $xy=0$ or $yx=0$.
\end{definition}

Observe that edges in $\varTheta\left(R\right)$ and $\overline{\varTheta}\left(R\right)$ are well-defined
because if $x_{1}\sim y_{1}$, $x_{2}\sim y_{2}$, and $x_{1}x_{2}=0$,
then $y_{1}=vx_{1}$ and $y_{2}=x_{2}u$ for some units $u,v$,
so $y_{1}y_{2}=\left(vx_{1}\right)\left(x_{2}u\right)=v\left(x_{1}x_{2}\right)u=0$. However, unlike in the commutative setting, the equivalence classes do not naturally form a monoid, because the operation $[x]\cdot[y]=[xy]$ is not well-defined in general.

\begin{remark}\label{rem:NC-C}
Let $K$ be a commutative ring. If we ignore the directions of edges in the directed graph $\varTheta\left(K\right)$ and then replace every double edge by a single edge, we obtain the undirected graph $\overline{\varTheta}\left(K\right)$, which is actually equal to the undirected compressed zero-divisor graph of $K$ as defined in \cite{Dju-Jev-Sto}.
\end{remark}

\begin{proposition}
The mapping $R\mapsto\varTheta\left(R\right)$ extends to
a functor $\varTheta:\mathbf{FinRing}\to\mathbf{Digraph}$.
\end{proposition}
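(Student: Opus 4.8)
The plan is to specify the action of $\varTheta$ on morphisms and then verify the functor axioms, essentially mirroring the commutative argument from \cite{Dju-Jev-Sto}. Given a unital ring homomorphism $f\colon R\to S$, I would define $\varTheta(f)\colon\varTheta(R)\to\varTheta(S)$ on vertices by $\varTheta(f)\bigl([a]\bigr)=[f(a)]$, exactly as in condition~\ref{enu:arrow-1} of Proposition~\ref{prop:best_possible}. The first point to settle is that this is well-defined, i.e. independent of the chosen representative. If $a\sim b$, then $a=bu=vb$ for some units $u,v\in R$; since $f$ is unital it maps $1$ to $1$ and hence carries units to units (with $f(u)^{-1}=f(u^{-1})$, and likewise for $v$), so applying $f$ gives $f(a)=f(b)f(u)=f(v)f(b)$ with $f(u),f(v)$ units in $S$. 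Thus $f(a)\sim f(b)$ and $[f(a)]=[f(b)]$, as required.

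Next I would check that $\varTheta(f)$ is a morphism in $\mathbf{Digraph}$, i.e. that it preserves directed edges, including loops. If $[x]\to[y]$ is an edge of $\varTheta(R)$, then by definition $xy=0$, whence $f(x)f(y)=f(xy)=f(0)=0$, so $[f(x)]\to[f(y)]$ is an edge of $\varTheta(S)$. This argument applies verbatim when $x=y$, so a loop at $[x]$ is sent to a loop at $[f(x)]$ and no separate case is needed. Since edges of $\varTheta$ were already observed to be well-defined by zero products, there is nothing further to check here.

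Finally, functoriality is immediate: $\varTheta(\mathrm{id}_R)$ sends $[a]$ to $[a]$, hence is the identity morphism of $\varTheta(R)$; and for composable homomorphisms $f\colon R\to S$ and $g\colon S\to T$ both $\varTheta(g\circ f)$ and $\varTheta(g)\circ\varTheta(f)$ send $[a]$ to $[g(f(a))]$, so they coincide. Therefore $\varTheta$ is a functor $\mathbf{FinRing}\to\mathbf{Digraph}$.

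There is no real obstacle in this proof; the only point demanding a moment's attention is the well-definedness of $\varTheta(f)$ on $\sim$-classes, which rests on the fact that a \emph{unital} homomorphism maps units to units. This would fail for non-unital homomorphisms, and it is precisely why the functor is set up on the category $\mathbf{FinRing}$ with unital morphisms.
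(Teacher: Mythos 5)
Your proof is correct and follows essentially the same route as the paper: define $\varTheta(f)([a])=[f(a)]$, note well-definedness because a unital homomorphism carries units to units, check edges via $xy=0\Rightarrow f(x)f(y)=0$, and verify the identity and composition axioms directly. The only difference is that you spell out details the paper leaves as routine, which is fine.
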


\begin{proof}
Let $f:R\to S$ be a unital ring homomorphism, where $R$
and $S$ are finite unital rings. Define $\varTheta\left(f\right):\varTheta\left(R\right)\to\varTheta\left(S\right)$
by $\varTheta\left(f\right)\left(\left[x\right]\right)=\left[f\left(x\right)\right]$.
Observe that $\varTheta\left(f\right)$ is well-defined since $f$
preserves units, and it is easy to verify that $\varTheta\left(f\right)$ is a graph
homomorphism. Clearly, $\varTheta\left(\textrm{id}_{R}\right)=\textrm{id}_{\varTheta\left(R\right)}$
and $\varTheta\left(g\circ f\right)=\varTheta\left(g\right)\circ\varTheta\left(f\right)$
for $f:R\to S$ and $g:S\to T$. So $\varTheta:\mathbf{FinRing}\to\mathbf{Digraph}$
is a functor.
\end{proof}

Observe that both categories involved have all finite products. Binary
product in the category $\mathbf{FinRing}$ is the direct product of
rings, while binary product in category $\mathbf{Digraph}$
is the tensor product of graphs (see \cite{Ham-Imr-Kla}). Recall that for graphs
$G$ and $H$, their tensor product $G\times H$ is defined as follows.
The set of vertices of $G\times H$ is the Cartesian product $V\left(G\right)\times V\left(H\right)$
and there is an edge $\left(g,h\right)\rightarrow\left(g',h'\right)$
if and only if there are edges $g\rightarrow g'$ and $h\rightarrow h'$
in $G$ and $H$ respectively.
Final object in the category $\mathbf{FinRing}$ is the
zero ring $0$ and final object in the category $\mathbf{Digraph}$
is the graph with precisely one vertex and one directed loop.

%Observe that both categories involved have all finite products. Binary
%product in the category $\mathbf{Ring}$ is the direct product of
%rings -- Cartesian product of underlying sets equipped with operations
%defined coordinate-wise. The same binary product is in $\mathbf{FinRing}$,
%since the direct product of two finite unital rings is again finite
%unital ring. Final object in the category $\mathbf{Ring}$ is the
%zero ring $0$. Binary product in the category $\mathbf{Digraph}$
%is the tensor product of graphs\underline{{[}Hammack{]}}. For graphs
%$G$ and $H$, their tensor product $G\times H$ is defined as follows.
%The set of vertices of $G\times H$ is Cartesian product $V\left(G\right)\times V\left(H\right)$
%and there is an edge $\left(g,h\right)\rightarrow\left(g',h'\right)$
%if and only if there are edges $g\rightarrow g'$ and $h\rightarrow h'$
%in $G$ and $H$ respectively. Final object in the category $\mathbf{Digraph}$
%is the graph with precisely one vertex and one directed loop.

\begin{proposition}\label{prop:noncommutative--preserves--products}
The functor $\varTheta:\mathbf{FinRing}\to\mathbf{Digraph}$
preserves finite products.
\end{proposition}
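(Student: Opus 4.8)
The plan is to verify that the two conditions defining preservation of finite products hold: first that $\varTheta$ sends the terminal object of $\mathbf{FinRing}$ to the terminal object of $\mathbf{Digraph}$, and second that for finite unital rings $R$ and $S$ there is a natural isomorphism $\varTheta(R \times S) \cong \varTheta(R) \times \varTheta(S)$ in $\mathbf{Digraph}$. The first point is immediate: the zero ring has a single element $0$ with $0 \cdot 0 = 0$, so $\varTheta(0)$ has one vertex $[0]$ with a loop, which is precisely the terminal object of $\mathbf{Digraph}$ described above. The substance of the proof is the second point.

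For the product, I would define the candidate isomorphism on vertices by $[(a,b)]_\sim \mapsto ([a]_\sim, [b]_\sim)$. The first task is to check this map is well-defined and injective, i.e. that $(a,b) \sim (a',b')$ in $R \times S$ if and only if $a \sim a'$ in $R$ and $b \sim b'$ in $S$. This follows because units in $R \times S$ are exactly pairs of units, so the equation $(a,b) = (a',b')(u,v) = (w,z)(a',b')$ with $(u,v),(w,z)$ units is equivalent to the conjunction $a = a'u = wa'$ and $b = b'v = zb'$. Surjectivity on vertices is clear. For the edge condition, observe that in $R \times S$ one has $(a,b)(a',b') = (aa', bb') = 0$ if and only if $aa' = 0$ and $bb' = 0$, which is exactly the condition for there to be edges $[a] \to [a']$ in $\varTheta(R)$ and $[b] \to [b']$ in $\varTheta(S)$ simultaneously; by the definition of the tensor product recalled above, this is precisely the condition for an edge $([a],[b]) \to ([a'],[b'])$ in $\varTheta(R) \times \varTheta(S)$. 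Hence the bijection is an isomorphism of directed graphs. Finally I would note naturality: for ring homomorphisms $f \colon R \to R'$ and $g \colon S \to S'$, the square relating $\varTheta(f \times g)$ to $\varTheta(f) \times \varTheta(g)$ commutes since both send $[(a,b)]_\sim$ to $([f(a)]_\sim, [g(b)]_\sim)$, and this together with preservation of the terminal object gives preservation of all finite products.

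I do not anticipate a serious obstacle here; the argument is essentially bookkeeping that rests on three elementary facts about direct products of rings — units factor componentwise, multiplication is componentwise, and the zero element is the pair of zeros — combined with the explicit description of the tensor product of digraphs. The only point requiring any care is making sure the "if and only if" in the well-definedness/injectivity step genuinely uses both the left and right multiplier conditions in the definition of $\sim$, but since units in a product factor componentwise in both variables, this presents no difficulty. I would therefore present the proof compactly, spelling out the vertex bijection and the edge equivalence and leaving the routine verification of functoriality of the isomorphism to the reader.
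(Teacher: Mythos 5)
Your proof is correct and follows essentially the same route as the paper: reduce to binary products and the terminal object, use that units in $R\times S$ are exactly pairs of units to identify the vertex sets, and use componentwise multiplication to match edges with the tensor product of digraphs. The paper's proof is a more compact version of exactly this argument, so no further comment is needed.
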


\begin{proof}
It is sufficient to show that functor $\varTheta$ preserves binary products and
final object. Let $R,S\in\mathrm{obj}\mathbf{FinRing}$. Since operations
in $R\times S$ are defined coordinate-wise, we have that $\left[\left(x,y\right)\right]=\left[\left(x',y'\right)\right]$
in $R\times S$ if and only if both $\left[x\right]=\left[x'\right]$
and $\left[y\right]=\left[y'\right]$. This shows that $V\left(\varTheta\left(R\times S\right)\right)=V\left(\varTheta\left(R\right)\right)\times V\left(\varTheta\left(S\right)\right)$.
There is an edge $\left[\left(x,y\right)\right]\rightarrow\left[\left(z,w\right)\right]$
in $\varTheta\left(R\times S\right)$ if and only if there are edges
$\left[x\right]\rightarrow\left[z\right]$ and $\left[y\right]\rightarrow\left[w\right]$
in $\varTheta\left(R\right)$ and $\varTheta\left(S\right)$ respectively.
Hence, $\varTheta\left(R\times S\right)$ is isomorphic to the tensor
product of graphs $\varTheta\left(R\right)$ and $\varTheta\left(S\right)$, where the isomorphism is given by $\left[\left(x,y\right)\right]\mapsto\left(\left[x\right],\left[y\right]\right)$.
Clearly, $\varTheta\left(0\right)$, the graph of the zero ring, is
the graph with precisely one vertex and one loop.
\end{proof}

Given an edge $a\rightarrow b$ in a directed graph $G$, we will
say that vertex $a$ is a \emph{pre-neighbour} of vertex $b$, and vertex $b$ a \emph{post-neighbour} of vertex $a$.
For a given set $X\subseteq V\left(G\right)$, let $N^{+}\left(X\right)$
denote the set of all common post-neighbours of all vertices in $X$.
Similarly, $N^{-}\left(X\right)$ will denote the set of all common
pre-neighbours of all vertices in $X$. For convenience we will simply
write $N^{+}\left(v\right)$ (respectively $N^{-}\left(v\right)$)
instead of $N^{+}\left(\left\{ v\right\} \right)$ (respectively $N^{-}\left(\left\{ v\right\} \right)$),
where $v\in V\left(G\right)$. We remark that $N^{-}\left(\left\{ v\right\} \right)$ and $N^{+}\left(\left\{ v\right\} \right)$ may or may not contain $v$ depending on whether there is a loop on $v$. Observe that in a graph $G$ with no equally oriented multiple edges we have $\left|N^{+}\left(v\right)\right|=\deg^{+}\left(v\right)$
and $\left|N^{-}\left(v\right)\right|=\deg^{-}\left(v\right)$, where $\deg^{+}\left(v\right)$ and $\deg^{-}\left(v\right)$ denote the outdegree and the indegree of $v$ respectively. All graphs considered in this paper have this property.

Next proposition generalizes \cite[Theorem~3.6]{Dju-Jev-Sto}. The proof is essentially the same except that extra care is needed due to the lack of commutativity.

\begin{proposition}\label{prop:noncommutative--preproduct}
Suppose $K,L_{1},L_{2}\in\mathrm{obj}\mathbf{FinRing}$
such that $\varTheta\left(K\right)\cong\varTheta\left(L_{1}\right)\times\varTheta\left(L_{2}\right)$.
Then $K=K_{1}\times K_{2}$ for some subrings $K_{1},K_{2}\subseteq K$
with $\varTheta\left(K_{1}\right)\cong\varTheta\left(L_{1}\right)$
and $\varTheta\left(K_{2}\right)\cong\varTheta\left(L_{2}\right)$.
\end{proposition}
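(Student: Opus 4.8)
The plan is to transport the product structure through a fixed graph isomorphism $\varphi\colon \varTheta(L_1)\times\varTheta(L_2)\to\varTheta(K)$ and to extract from $K$ a central idempotent realizing the decomposition. If $L_1$ or $L_2$ is the zero ring, the corresponding tensor factor is the one-vertex loop and the claim is immediate, so I assume both $L_i$ are nonzero; then $[1]_{L_i}\neq[0]_{L_i}$. A direct computation in $\varTheta(L_1)\times\varTheta(L_2)$ shows that $s:=([1]_{L_1},[0]_{L_2})$ has $N^{+}(s)=N^{-}(s)=\{[0]_{L_1}\}\times V(\varTheta(L_2))$, which as an induced subgraph is isomorphic to $\varTheta(L_2)$; symmetrically $s':=([0]_{L_1},[1]_{L_2})$ has $N^{+}(s')=N^{-}(s')=V(\varTheta(L_1))\times\{[0]_{L_2}\}\cong\varTheta(L_1)$; moreover $s\to s'$ and $s'\to s$, there is no loop at $s$ or $s'$, and $N^{+}(s)\cap N^{+}(s')$ is the single vertex $([0]_{L_1},[0]_{L_2})$. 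Fix representatives $a\in\varphi(s)$ and $a'\in\varphi(s')$.

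Next I would translate these graph facts into ring-theoretic statements, using that the isomorphism $\varphi$ carries $N^{+}$, $N^{-}$, and induced subgraphs to $N^{+}$, $N^{-}$, and induced subgraphs. From $N^{+}(\varphi(s))=N^{-}(\varphi(s))$ we obtain $\ann_r a=\ann_\ell a=:J$, a two-sided ideal of $K$; likewise $\ann_r a'=\ann_\ell a'=:J'$ is a two-sided ideal. From $\varphi(s)\to\varphi(s')$ and $\varphi(s')\to\varphi(s)$ we get $aa'=a'a=0$, hence $a\in J'$. Finally $N^{+}(\varphi(s))\cap N^{+}(\varphi(s'))$ equals the set of classes $\{[k]:k\in J\cap J'\}$ and is a single vertex; since it contains $[0]$ it equals $\{[0]\}$, and as $[0]=\{0\}$ this gives $J\cap J'=0$.

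The heart of the matter, and the step I expect to be the main obstacle since the vertices of $\varTheta(K)$ no longer form a monoid as in \cite{Dju-Jev-Sto}, is to prove that $J'$ is a unital ring whose identity is central in $K$. Because $J\cap J'=0$, left multiplication $L_a\colon J'\to J'$ is injective (if $ax=0$ with $x\in J'$, then $x\in\ann_r a\cap J'=0$), hence bijective as $J'$ is finite; so there is a unique $e\in J'$ with $ae=a$, and since $ae^{2}=(ae)e=ae$, injectivity of $L_a$ forces $e^{2}=e$. From $a(eJ')=(ae)J'=aJ'=J'$ and injectivity of $L_a$ one deduces $eJ'=J'$, so $e$ is a left identity of $J'$; the mirror argument with right multiplication produces an idempotent that is a right identity of $J'$, hence $J'$ has a two-sided identity $e$. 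A two-sided ideal with an identity $e$ has $e$ central in $K$ (for $k\in K$, $ek=eke=ke$ because $ek,ke\in J'$) and satisfies $eK=J'$, so $e$ is a central idempotent and $K=eK\times(1-e)K$. Since $a(1-e)x=(a-ae)x=0$ for every $x\in K$, we have $(1-e)K\subseteq J$, and together with $K=eK\oplus(1-e)K$ and $J\cap J'=0$ this forces $(1-e)K=J$. Put $K_1:=eK=J'$ and $K_2:=(1-e)K=J$, so that $K=K_1\times K_2$.

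It remains to identify $\varTheta(K_1)$ with $\varTheta(L_1)$ and $\varTheta(K_2)$ with $\varTheta(L_2)$. I would examine the subgraph of $\varTheta(K)$ induced on the vertex set $N^{+}(\varphi(s'))=\{[k]:k\in J'\}$. On one hand, $\varphi^{-1}$ carries it isomorphically onto the subgraph of $\varTheta(L_1)\times\varTheta(L_2)$ induced on $N^{+}(s')$, which is $\cong\varTheta(L_1)$. On the other hand, for $k\in J'=eK$ the whole $\sim$-class of $k$ lies in $J'$, and since $K=K_1\times K_2$ the relation $\sim$ of $K$ restricted to $K_1$ coincides with the relation $\sim$ of $K_1$ (a unit of $K$ restricts to a unit of $K_1$, and a unit of $K_1$ becomes a unit of $K$ after adding $1-e$, exactly as at the end of the proof of Proposition~\ref{prop:noncommutative--preserves--products}), while the adjacency condition $kk'=0$ is independent of the ambient ring; hence the same induced subgraph is $\cong\varTheta(K_1)$. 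Therefore $\varTheta(K_1)\cong\varTheta(L_1)$, and the symmetric argument using $\varphi(s)$ and $N^{+}(\varphi(s))=\{[k]:k\in J\}$ gives $\varTheta(K_2)\cong\varTheta(L_2)$.
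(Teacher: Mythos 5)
Your proposal is correct, and while it opens exactly as the paper does --- transporting $([1],[0])$ and $([0],[1])$ through the isomorphism, reading off $N^{+}=N^{-}$ at their images so that $J=\ann_r a=\ann_\ell a$ and $J'=\ann_r a'=\ann_\ell a'$ are two-sided ideals with $aa'=a'a=0$ and $J\cap J'=0$, and closing with the same identification of $\varTheta(K_1)$ and $\varTheta(K_2)$ with the induced subgraphs on $N^{+}(\varphi(s'))$ and $N^{+}(\varphi(s))$ --- the decisive middle step is genuinely different. The paper stays inside the graph: working in the induced copies of $\varTheta(L_1)$ and $\varTheta(L_2)$ it shows $[k_1^2]=[k_1]$ and then $[1-k_1u_1]=[k_2]$ (your $a,a'$ are its $k_1,k_2$), which gives $1=k_1u_1+k_2u_2$ and hence $K=K_1+K_2$; this requires a somewhat delicate case analysis using the uniqueness of the vertex whose only post-neighbour is $[0]$, and it leans on the unit-factor built into the definition of $\sim$. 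You instead argue ring-theoretically: since $J\cap J'=0$ and $J'$ is finite, left and right multiplication by $a$ are bijections of $J'$, which yields a two-sided identity $e$ of $J'$; $e$ is then a central idempotent of $K$ with $eK=J'$ and $(1-e)K=J$, and the Peirce decomposition gives $K=K_1\times K_2$ at once. Your route uses only the crudest graph data (the two distinguished neighbourhoods, the edge between the two vertices, and their unique common post-neighbour) and shifts the work to an elementary finiteness argument, avoiding the paper's induced-subgraph analysis entirely; the paper's argument, in exchange, extracts the identity-like behaviour of $[k_1]$ and $[k_2]$ from the graph itself, which is closer in spirit to the functorial viewpoint of the section. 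All the steps you give check out (in particular $e^2=e$, $eJ'=J'$, centrality of $e$, and $(1-e)K=J$ follow as you say); the only trivial slip is that the coordinatewise description of $\sim$ and of units in a product appears at the beginning, not the end, of the proof of Proposition~\ref{prop:noncommutative--preserves--products}.
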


\begin{proof}
If $\varTheta\left(L_{1}\right)\cong\varTheta\left(0\right)$,
then $\varTheta\left(L_{1}\right)\times\varTheta\left(L_{2}\right)\cong\varTheta\left(L_{2}\right)$
so we may take $K_{1}=0$ and $K_{2}=K$. We argue similarly if $\varTheta\left(L_{2}\right)\cong\varTheta\left(0\right)$.
So assume $\varTheta\left(L_{1}\right)\ncong\varTheta\left(0\right)$
and $\varTheta\left(L_{2}\right)\ncong\varTheta\left(0\right)$.

Let $f:\varTheta\left(L_{1}\right)\times\varTheta\left(L_{2}\right)\to\varTheta\left(K\right)$
be any isomorphism. Choose $k_{1},k_{2}\in K$ such that $f\left(\left(\left[1\right],\left[0\right]\right)\right)=\left[k_{1}\right]$
and $f\left(\left(\left[0\right],\left[1\right]\right)\right)=\left[k_{2}\right]$.
Observe that $N^{+}\left(\left(\left[1\right],\left[0\right]\right)\right)=N^{-}\left(\left(\left[1\right],\left[0\right]\right)\right)$,
hence $N^{+}\left(\left[k_{1}\right]\right)=N^{-}\left(\left[k_{1}\right]\right)$.
This implies that $\ann_{r}\left(k_{1}\right)=\ann_{\ell}\left(k_{1}\right)$.
Similarly, $\ann_{r}\left(k_{2}\right)=\ann_{\ell}\left(k_{2}\right)$.
Define 
\begin{equation}
K_{1}=\ann_{r}\left(k_{2}\right)=\ann_{\ell}\left(k_{2}\right) \quad\textup{and} \quad K_{2}=\ann_{r}\left(k_{1}\right)=\ann_{\ell}\left(k_{1}\right).\label{eq:24}
\end{equation}
Clearly, $K_{1}$ and $K_{2}$ are two-sided ideals of $K$. If $x\in K_{1}\cap K_{2}$,
then 
\begin{align*}
\left[x\right]\in N^{+}\left(\left[k_{1}\right]\right)\cap N^{+}\left(\left[k_{2}\right]\right) &=f\left(N^{+}\left(\left(\left[1\right],\left[0\right]\right)\right)\cap N^{+}\left(\left(\left[0\right],\left[1\right]\right)\right)\right)= \\
&=f\left(\left\{ \left(\left[0\right],\left[0\right]\right)\right\} \right)=\{[0]\}.
\end{align*}
Thus, $K_{1}\cap K_{2}=0$.

Observe that the subgraph of $\varTheta\left(L_{1}\right)\times\varTheta\left(L_{2}\right)$,
induced by $N^{+}\left(\left(\left[0\right],\left[1\right]\right)\right)$,
is isomorphic to $\varTheta\left(L_{1}\right)$. Hence, the
subgraph $G_{1}$ of $\varTheta\left(K\right)$, induced by $N^{+}\left(\left[k_{2}\right]\right)$,
is also isomorphic to $\varTheta\left(L_{1}\right)$. By definition of $K_1$ we clearly have
$V\left(G_{1}\right)=\left\{ \left[x\right]\in V\left(\varTheta\left(K\right)\right):x\in K_{1}\right\} $
and $k_{1}\in K_{1}$. Since $K_{1}$ is an ideal, it thus follows that $\left[k_{1}^{2}\right]\in V\left(G_{1}\right)$. Observe that $[k_1^2] \neq [0]$, since $[k_1]$ has no loop due to the fact that $L_1 \neq 0$.
Suppose $\left[k_{1}^{2}\right]\neq\left[k_{1}\right]$. Then $\left[k_{1}^{2}\right]\in V\left(G_{1}\right)\setminus\left\{ \left[0\right],\left[k_{1}\right]\right\} $.
Since $G_{1}\cong\varTheta\left(L_{1}\right)$, the neighbourhood
$N_{G_{1}}^{+}\left(V\left(G_{1}\right)\right)$ contains only one
vertex, i.e. $\left[0\right]$, and there is only one vertex in $G_{1}$
whose only post-neighbour in $G_{1}$ is $\left[0\right]$, i.e. $f\left(\left(\left[1\right],\left[0\right]\right)\right)=\left[k_{1}\right]$.
This implies that $\left[k_{1}^{2}\right]$ has a post-neighbour in
$G_{1}$ different from $\left[0\right]$, say $\left[a\right]$,
where $a\in K_{1}$. Hence, $k_{1}^{2}a=0$ because $G_{1}$ is an
induced subgraph of $\varTheta\left(K\right)$. This implies that $\left[k_{1}a\right]$
is a post-neighbour of $\left[k_{1}\right]$ in $\varTheta\left(K\right)$,
and since $K_{1}$ is an ideal, $\left[k_{1}a\right]\in V\left(G_{1}\right)$.
Therefore, $k_{1}a=0$ by the above. Similarly, this implies that
$\left[a\right]$ is a post-neighbour of $\left[k_{1}\right]$, hence
$a=0$, a contradiction. We have thus shown that $\left[k_{1}^{2}\right]=\left[k_{1}\right]$.
In particular, $k_{1}=k_{1}^{2}u_{1}$ for some unit $u_{1}\in K$.

Observe that $k_{1}\left(1-k_{1}u_{1}\right)=0$, hence $1-k_{1}u_{1}\in K_{2}$
by (\ref{eq:24}). If $1-k_{1}u_{1}=0$, then $k_{1}$ is a unit in
$K$, hence $\left[k_{1}\right]=\left[1\right]$. But this would imply
that $\left[0\right]$ is the only post-neighbour of $\left[k_{1}\right]$
in $\varTheta\left(K\right)$, which would further imply
$K_{2}=0$. In this case, $\varTheta\left(L_{2}\right)\cong\varTheta\left(0\right)$,
a contradiction. So $1-k_{1}u_{1}\neq0$.

Suppose $\left[1-k_{1}u_{1}\right]\neq\left[k_{2}\right]$. Let $G_{2}$
be the subgraph of $\varTheta\left(K\right)$, induced by $N^{+}\left(\left[k_{1}\right]\right)$.
Then the same argument as above shows that $\left[1-k_{1}u_{1}\right]\in V\left(G_{2}\right)\setminus\left\{ \left[0\right],\left[k_{2}\right]\right\} $
has a post-neighbour in $G_{2}$ different from $\left[0\right]$,
say $\left[b\right]$, where $0 \neq b\in K_{2}$. Hence, 
\begin{equation}
\left(1-k_{1}u_{1}\right)b=0\label{eq:24-1}
\end{equation}
 because $G_{2}$ is an induced subgraph of $\varTheta\left(K\right)$.
Since $k_{2}k_{1}=0$, we have $k_{2}=k_{2}\left(1-k_{1}u_{1}\right)$.
Hence, $k_{2}b=0$ by (\ref{eq:24-1}). This implies $b\in K_{1}$,
so $b\in K_{1}\cap K_{2}=0$, a contradiction. Thus, $\left[1-k_{1}u_{1}\right]=\left[k_{2}\right]$,
and consequently $1=k_{1}u_{1}+k_{2}u_{2}$ for some unit $u_{2}\in K$. This
shows that $K=K_{1}+K_{2}$. Since we already know that $K_{1}\cap K_{2}=0$,
we conclude that $K=K_{1}\times K_{2}$.

Observe that if $x\in K_{1}$ and $x\sim y$ in $K$, then $y\in K_{1}$
and $x\sim y$ in $K_{1}$. Hence, $\varTheta\left(K_{1}\right)\cong G_{1}\cong\varTheta\left(L_{1}\right)$
and similarly $\varTheta\left(K_{2}\right)\cong\varTheta\left(L_{2}\right)$.
\end{proof}

It follows from Remark~\ref{rem:NC-C} and \cite[Example~3.5]{Dju-Jev-Sto} that functor $\varTheta$ does not preserve limits.

\section{Rings of matrices over finite fields}\label{sec:matrices}

We now investigate the graph of the matrix ring $M_{n}\left(F\right)$, where $F$ is a finite field. In particular, we determine the number of vertices, their degrees, existence of Hamiltonian paths and cycles, directed cliques of maximal size, and dominating sets of minimal size.

Below we first establish a bijective correspondence
between the set of vertices of $\varTheta\left(M_{n}\left(F\right)\right)$ and the set of ordered
pairs $\left(V,W\right)$ of subspaces of $F^{n}$ such that $\dim V+\dim W=n$. This correspondence is given by the map $[A] \mapsto (\textup{im} A, \textup{ker} A)$, where $\textup{im} A$ and $\textup{ker} A$ denote respectively the image and the kernel of the linear transformation $x \mapsto Ax$.

Assume $A\sim B$ in $M_{n}\left(F\right)$. Then $A=BP=QB$ for some invertible matrices $P$ and $Q$. The invertibility of $P$ clearly implies that $\im A=\im B$ and the invertibility of $Q$ implies that $\ker A=\ker B$.
So for $\left[A\right]\in V\left(\varTheta\left(M_{n}\left(F\right)\right)\right)$,
there is a well-defined ordered pair $\left(\im A,\ker A\right)$
of subspaces of $F^n$ whose dimensions add up to $n$.
%If $y=Ax$
%where $x,y\in F^{n}$, then $y=Ax=\left(BP\right)x=B\left(Px\right)$,
%and similarly, if $z=Bx$ then $z=A\left(P^{-1}x\right)$. Hence $\imA=\imB$. Also if $Bx=0$, where $x \in F^{n}$,
%then $Ax=\left(QB\right)x=Q\left(Bx\right)=0$ and similarly if $Ax=0$
%then $Bx=0$ hence $\kerA=\kerB$.

Conversely, given a pair $\left(V,W\right)$ of subspaces of $F^{n}$
such that $\dim V+\dim W=n$, fix some bases of $V$ and $W$,
say $\mathcal{B}_{V}$ and $\mathcal{B}_{W}$. Let $\mathcal{B}$ be some basis of $F^{n}$ that
contains $\mathcal{B}_{W}$. Let $A$ be a matrix that represents (in the standard basis) a linear transformation which maps $\mathcal{B}\setminus \mathcal{B}_{W}$ bijectively onto $\mathcal{B}_{V}$ and maps $\mathcal{B}_{W}$
to $0$. Clearly, $\left(\im A,\ker A\right)=\left(V,W\right)$.
If $B$ is another matrix with $\left(\im B,\ker B\right)=\left(V,W\right)$,
then the equality $\im A=\im B$ implies $A=BP$ for
some invertible matrix $P$ and the equality $\ker A=\ker B$
implies $A=QB$ for some invertible matrix $Q$. This establishes the aforementioned bijective correspondence.

We will use the above correspondence throughout the paper. Observe also that, by the this correspondence, there is an edge $\left(V_{1},W_{1}\right)\rightarrow\left(V_{2},W_{2}\right)$
if and only if $W_{1}\supseteq V_{2}$.

Let $q>1$ be a prime power. We will denote the number of $k$-dimensional subspaces of $F_{q}^{n}$
by $\binom{n}{k}_{q}$. This is usually called a \emph{$q$-binomial
coefficient}. In particular, $\binom{n}{k}_{q}=0$ if $k>n$ or $k<0$.
For a detailed treatment of $q$-binomial coefficients we refer the reader to Stanley's book \cite[\S 7.1]{Sta}, where, in particular, it is shown that the $q$-binomial coefficients satisfy the following equalities
\begin{equation}\label{eq:razpisano}
 \binom{n}{k}_{q}=\frac{\left(q^{n}-1\right)\left(q^{n}-q\right)\cdots\left(q^{n}-q^{k-1}\right)}{\left(q^{k}-1\right)\left(q^{k}-q\right)\cdots\left(q^{k}-q^{k-1}\right)},
\end{equation}
\begin{equation}\label{eq:simetrija}
\binom{n}{k}_{q}=\binom{n}{n-k}_{q},
\end{equation}
\begin{equation}\label{eq:rekurzija}
\binom{n}{k}_{q}=\binom{n-1}{k}_{q}+q^{n-k}\binom{n-1}{k-1}_{q}.
\end{equation}
We remark that Stanley's book also gives a generating function for $\binom{n}{k}_{q}$.

We begin by counting the vertices in $\varTheta\left(M_{n}\left(F_{q}\right)\right)$ and determining their degrees.

\begin{proposition}\label{prop:degree}
If $R=M_{n}\left(F_{q}\right)$, then 
\[
\left|V\left(\varTheta\left(R\right)\right)\right|=\sum_{i=0}^{n}\binom{n}{i}_{q}^{2}
\]
and 
\[
\deg^{+}\left(\left(V,W\right)\right)=\deg^{-}\left(\left(V,W\right)\right)=\sum_{i=0}^{\dim W}\binom{\dim W}{i}_{q}\binom{n}{i}_{q}.
\]
\end{proposition}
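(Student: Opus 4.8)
The plan is to exploit the bijective correspondence $[A] \leftrightarrow (\operatorname{im}A, \ker A)$ established just above, together with the adjacency rule: there is an edge $(V_1, W_1) \to (V_2, W_2)$ if and only if $W_1 \supseteq V_2$. For the vertex count, I would simply observe that the vertices are in bijection with pairs $(V, W)$ of subspaces of $F_q^n$ satisfying $\dim V + \dim W = n$; summing over $i = \dim V$ (so $\dim W = n - i$), the number of such pairs is $\sum_{i=0}^n \binom{n}{i}_q \binom{n}{n-i}_q$, which equals $\sum_{i=0}^n \binom{n}{i}_q^2$ by the symmetry identity \eqref{eq:simetrija}. That part is immediate.

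For the degree formula, first note that since $\varTheta(M_n(F_q))$ has no equally oriented multiple edges (as remarked in the text), $\deg^{+}(v) = |N^{+}(v)|$ and $\deg^{-}(v) = |N^{-}(v)|$. I would compute $\deg^{+}((V, W))$ by counting the post-neighbours: these are the vertices $(V', W')$ with $W \supseteq V'$. So I need to count pairs $(V', W')$ with $\dim V' + \dim W' = n$ and $V' \subseteq W$. For each choice of $i = \dim V'$ with $0 \le i \le \dim W$, there are $\binom{\dim W}{i}_q$ choices for the $i$-dimensional subspace $V' \subseteq W$, and then $\dim W' = n - i$ is determined, with $\binom{n}{n-i}_q = \binom{n}{i}_q$ choices for $W'$ (a subspace of all of $F_q^n$, with no containment constraint). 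Summing over $i$ gives $\sum_{i=0}^{\dim W} \binom{\dim W}{i}_q \binom{n}{i}_q$, as claimed.

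For $\deg^{-}((V, W))$ I would argue symmetrically: the pre-neighbours of $(V, W)$ are the vertices $(V', W')$ with $W' \supseteq V$. Writing $j = \dim W'$, the constraint $V \subseteq W'$ forces $j \ge \dim V$, and there are $\binom{n - \dim V}{j - \dim V}_q$ choices of $W'$ containing $V$ (subspaces of the $(n-\dim V)$-dimensional quotient $F_q^n / V$), while $\dim V' = n - j$ is then free, contributing $\binom{n}{n-j}_q = \binom{n}{j}_q$ choices. Reindexing by $i = j - \dim V$ and using $\dim V = n - \dim W$, one gets $\sum_{i=0}^{\dim W} \binom{\dim W}{i}_q \binom{n}{\, i + (n - \dim W)\,}_q = \sum_{i=0}^{\dim W} \binom{\dim W}{i}_q \binom{n}{\dim W - i}_q$, which by \eqref{eq:simetrija} on the second factor equals $\sum_{i=0}^{\dim W} \binom{\dim W}{i}_q \binom{n}{i}_q$ — the same expression. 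Hence $\deg^{+} = \deg^{-}$.

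The only real subtlety — and the step I would be most careful about — is the counting of subspaces containing a fixed subspace: the number of $j$-dimensional subspaces of $F_q^n$ containing a fixed $d$-dimensional subspace equals $\binom{n-d}{j-d}_q$, via the lattice isomorphism with subspaces of the quotient space. Everything else is bookkeeping with the $q$-binomial identities \eqref{eq:simetrija}, and perhaps a sanity check at the extreme cases $W = 0$ (only post-neighbour $(F_q^n, 0)$, degree $1$) and $W = F_q^n$ (so $V = 0$; every vertex is a post-neighbour, degree $\sum_i \binom{n}{i}_q^2$, the full vertex count).
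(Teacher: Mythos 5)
Your proposal is correct and takes essentially the same route as the paper's proof: the same bijective correspondence, the same counting of post-neighbours ($V'\subseteq W$ free choice of $W'$) and pre-neighbours ($W'\supseteq V$, counted via subspaces of the quotient $F_q^n/V$), the paper merely performing the substitution $j=n-i$ directly instead of your $i=j-\dim V$. The only slip is your last justification: $\binom{n}{\dim W-i}_q$ is not $\binom{n}{i}_q$ ``by \eqref{eq:simetrija} on the second factor'' (symmetry gives $\binom{n}{n-\dim W+i}_q$); the two sums do coincide, but because one reverses the summation index $i\mapsto \dim W-i$ and applies \eqref{eq:simetrija} to the first factor, $\binom{\dim W}{\dim W-i}_q=\binom{\dim W}{i}_q$ --- a harmless, one-line fix.
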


\begin{proof}
The number of pairs $(V,W)$ of subspaces of $F_q^n$, where $\dim V+\dim W=n$ and $\dim V=i$ equals $\binom{n}{i}_q\binom{n}{n-i}_q=\binom{n}{i}_q^2$. Summing over all $i$ gives the formula for the number of vertices.

A vertex $(V',W')$ is a post-neighbour of $(V,W)$ if and only if $V' \subseteq W$. There are $\binom{\dim W}{i}_q$ subspaces of $W$ with dimension equal to $i$ and for each such subspace $X$ there are $\binom{n}{n-i}_q=\binom{n}{i}_q$ vertices of the form $(X,*)$.
Hence, $(V,W)$ has $\binom{\dim W}{i}_q\binom{n}{i}_q$ post-neighbours $(V',W')$ with $\dim V'=i$. Summing over all $0\leq i\leq \dim W$ gives
\[ \deg^{+}\left(\left(V,W\right)\right)=\sum_{i=0}^{\dim W}\binom{\dim W}{i}_{q}\binom{n}{i}_{q}. \]

A vertex $(V',W')$ is a pre-neighbour of $(V,W)$ if and only if $W' \supseteq V$. The number of spaces of $F_q^n$ with dimension $j$ that contain $V$,
is equal to the number of subspaces of $F_q^n/V \cong F_q^{n-\dim V}$ with dimension $j-\dim V$, and this equals $\binom{n-\dim V}{j-\dim V}_q=\binom{n-\dim V}{n-j}_q$.
For each subspace $Y$ of the former kind there are $\binom{n}{n-j}_q$ vertices of the form $(*,Y)$. Hence, $(V,W)$ has $\binom{n-\dim V}{n-j}_q\binom{n}{n-j}_q$ pre-neighbours $(V',W')$ with $\dim W'=j$.
Summing over all $\dim V\leq j \leq n$ gives
\[ \deg^{-}\left(\left(V,W\right)\right)=\sum_{j=\dim V}^{n}\binom{n-\dim V}{n-j}_q\binom{n}{n-j}_q. \]
If we substitute $j=n-i$ into this sum we obtain
\[ \deg^{-}\left(\left(V,W\right)\right)=\sum_{i=0}^{n-\dim V}\binom{n-\dim V}{i}_q\binom{n}{i}_q=\sum_{i=0}^{\dim W}\binom{\dim W}{i}_q\binom{n}{i}_q. \]
\end{proof}

This already enables us to show that non-isomorphic matrix rings have non-isomorphic graphs.

\begin{proposition}\label{prop:same--graphs--same--rings}
If $\varTheta\left(M_{n}\left(F\right)\right)\cong\varTheta\left(M_{m}\left(E\right)\right)$,
where $n>1$ and $F$ and $E$ are finite fields, then $n=m$ and $F\cong E$.
\end{proposition}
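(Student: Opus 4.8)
The plan is to recover the parameters of the matrix ring from purely graph-theoretic data, using the degree formula of Proposition~\ref{prop:degree}. Write $F = F_q$ and $E = F_{q'}$, put $R = M_n(F_q)$, and set
\[
g_{n,q}(d) = \sum_{i=0}^{d}\binom{d}{i}_q\binom{n}{i}_q \qquad (0 \le d \le n).
\]
By Proposition~\ref{prop:degree} a vertex $(V,W)$ of $\varTheta(R)$ has out-degree $g_{n,q}(\dim W)$, where $\dim W$ ranges over $\{0,1,\dots,n\}$; thus the out-degree value $g_{n,q}(d)$ occurs exactly $\binom{n}{d}_q^2$ times in $\varTheta(R)$. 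Since an isomorphism of directed graphs preserves out-degrees, the list of out-degree values together with their multiplicities is an isomorphism invariant, and this is the invariant I will exploit.

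First I would prove the monotonicity $g_{n,q}(0) < g_{n,q}(1) < \dots < g_{n,q}(n)$. Substituting the $q$-Pascal recurrence \eqref{eq:rekurzija}, written as $\binom{d+1}{i}_q = \binom{d}{i}_q + q^{d+1-i}\binom{d}{i-1}_q$, into $g_{n,q}(d+1)$ gives
\[
g_{n,q}(d+1) = g_{n,q}(d) + \sum_{i=1}^{d+1} q^{\,d+1-i}\binom{d}{i-1}_q\binom{n}{i}_q,
\]
and for $0 \le d \le n-1$ every term of the last sum is strictly positive because $1 \le i \le d+1 \le n$. Consequently $\varTheta(M_n(F_q))$ has exactly $n+1$ distinct out-degrees. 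The same count for $\varTheta(M_m(F_{q'}))$ is $m+1$, so the assumed isomorphism forces $n = m$.

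With $n = m$ fixed, the isomorphism matches the distinct out-degree values of the two graphs in increasing order; the smallest value $g_{n,q}(0) = 1$ carries no information, but equating the second-smallest values gives
\[
1 + \binom{n}{1}_q = 1 + \binom{n}{1}_{q'}, \qquad\text{that is,}\qquad 1 + q + \cdots + q^{\,n-1} = 1 + q' + \cdots + (q')^{\,n-1}.
\]
As $n \ge 2$, the two sides are strictly increasing in the base, so $q = q'$, and therefore $F \cong E$. (Alternatively one could note that the vertex count $\sum_{i=0}^{n}\binom{n}{i}_q^2$ is strictly increasing in $q$ for fixed $n \ge 2$.)

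I do not expect a real obstacle, but the one point requiring genuine care is the strict monotonicity of $g_{n,q}$: it is exactly what makes ``$n+1$ distinct out-degrees'' a legitimate invariant. One cannot simply deduce $n = m$ and $q = q'$ from $\binom{n}{1}_q = \binom{m}{1}_{q'}$, since this can happen with $(n,q) \neq (m,q')$ --- for instance $\binom{3}{1}_5 = 31 = \binom{5}{1}_2$ --- so it is the count of distinct out-degrees, rather than the out-degrees themselves, that ultimately separates the graphs.
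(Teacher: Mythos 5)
Your proof is correct and follows essentially the same route as the paper: both arguments use the fact that the outdegree from Proposition~\ref{prop:degree} is strictly increasing in $\dim W$, so the number of distinct outdegrees gives $n=m$, and then the second-smallest outdegree $1+\binom{n}{1}_q$, being strictly increasing in $q$ for $n>1$, gives $q=q'$. The only difference is that you spell out the monotonicity of $g_{n,q}$ via the $q$-Pascal recurrence, a detail the paper merely asserts.
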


\begin{proof}
Let $G=\varTheta\left(M_{n}\left(F\right)\right)$ and $F\cong F_{q}$, $E\cong F_{q'}$. Observe that, 
by Proposition~\ref{prop:degree}, $\deg^{+}\left(\left(V,W\right)\right)$ only depends on $\dim W$, and as a function of $\dim W$ it is strictly increasing. 
%Observe
%that, by Proposition~\ref{prop:degree}, we have $\deg^{+}\left(\left(V,W\right)\right)=\deg^{+}\left(\left(X,Y\right)\right)$
%if and only if $\dim W=\dim Y$. Clearly, if $\dim W=\dim Y$
%then $\deg^{+}\left(\left(V,W\right)\right)=\deg^{+}\left(\left(X,Y\right)\right)$.
%Conversely, if $\dim W<\dim Y$ then
%\begin{eqnarray*}
%\deg^{+}\left(\left(V,W\right)\right) & = & \sum_{i=0}^{\dim W}\left(\begin{array}{c}
%\dim W\\
%i
%\end{array}\right)_{q}\left(\begin{array}{c}
%n\\
%i
%\end{array}\right)_{q}<\sum_{i=0}^{\dim W}\left(\begin{array}{c}
%\dim Y\\
%i
%\end{array}\right)_{q}\left(\begin{array}{c}
%n\\
%i
%\end{array}\right)_{q}<\\
% & < & \sum_{i=0}^{\dim Y}\left(\begin{array}{c}
%\dim Y\\
%i
%\end{array}\right)_{q}\left(\begin{array}{c}
%n\\
%i
%\end{array}\right)_{q}=\deg^{+}\left(\left(X,Y\right)\right).
%\end{eqnarray*}
Hence, the number of different outdegrees in $G$ is equal to the number of different
dimensions of subspaces of $F^{n}$ which is $n+1$. This implies $n=m$.
In addition, the least outdegree in $G$ different from $1$ is equal
to
\begin{align*}
d &=\sum_{i=0}^{1}\binom{1}{i}_{q}\binom{n}{i}_{q} = 1+\binom{n}{1}_{q}=1+\frac{q^{n}-1}{q-1}=\\
&= q^{n-1}+q^{n-2}+\cdots+q^{2}+q+2.
\end{align*}
Every term $q^{i}$, $i\geq 1$, is an increasing function of $q\in\mathbb{N}$. Given that $n>1$, this implies that $d$ is an increasing function of $q$, so it is injective.
Since $n=m$, this implies $q=q'$, hence $F\cong E$.
\end{proof}

In the next few results we discuss Hamiltonian cycles and paths in graph $\varTheta(M_n(F))$. Recall that a cycle in a directed graph is called \emph{simple} if no vertex in this cycle is repeated. A cycle of length $1$ is a vertex with a loop.

\begin{lemma}\label{lem:cycle--subspace}
Let $V$ be a nontrivial proper subspace
of $F^{n}$ and $\mathscr{S}$ a set of subspaces of $F^{n}$, such
that $V\notin\mathscr{S}$. Then, there is a simple (possibly empty)
cycle in $\varTheta\left(M_{n}\left(F\right)\right)\setminus\left\{ \left[0\right],\left[1\right]\right\} $
that contains all vertices of the form $\left(V,W\right)$ or $\left(W,V\right)$
where $W\notin\mathscr{S}$, and every edge in this cycle is of the
form $\left(X_{1},X_{2}\right)\rightarrow\left(X_{2},X_{3}\right)$.
\end{lemma}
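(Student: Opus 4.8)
The plan is to exhibit the cycle explicitly as a cyclic list of subspaces and then verify the four things that are asked for: that it is a simple cycle in $\varTheta(M_n(F))$, that it avoids $[0]$ and $[1]$, that all of its edges have the prescribed shape, and that it covers the required vertices. Write $k=\dim V$, so $1\le k\le n-1$, and let $\mathscr W$ be the set of all subspaces $W$ of $F^n$ with $\dim W=n-k$ and $W\notin\mathscr S$. By the correspondence between vertices of $\varTheta(M_n(F))$ and pairs of subspaces, $\mathscr W$ is precisely the set of those $W$ for which $(V,W)$ and $(W,V)$ are vertices of the form demanded in the statement. If $\mathscr W=\emptyset$ the empty cycle does the job, so assume $\mathscr W=\{W_1,\dots,W_m\}$ with $m\ge 1$.

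The observation that makes everything work is that for \emph{any} subspaces $X_1,X_2,X_3$ of $F^n$ with $\dim X_1+\dim X_2=\dim X_2+\dim X_3=n$ there is automatically an edge $(X_1,X_2)\to(X_2,X_3)$ in $\varTheta(M_n(F))$, since this edge only requires $X_2\supseteq X_2$. Consequently any cyclic list $A_1,A_2,\dots,A_\ell$ of subspaces in which consecutive dimensions sum to $n$ produces a closed walk $(A_1,A_2)\to(A_2,A_3)\to\cdots\to(A_\ell,A_1)\to(A_1,A_2)$, every edge of which already has the prescribed form $(X_1,X_2)\to(X_2,X_3)$; this walk is a simple cycle precisely when the consecutive pairs $(A_i,A_{i+1})$ are pairwise distinct.

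Accordingly, when $V\notin\mathscr W$ I would take the cyclic list
\[ V,\;W_1,\;V,\;W_2,\;V,\;\dots,\;V,\;W_m \]
of length $2m$, whose consecutive pairs are $(V,W_1),(W_1,V),(V,W_2),(W_2,V),\dots,(V,W_m),(W_m,V)$; here consecutive dimensions alternate between $k$ and $n-k$, and since $W_1,\dots,W_m$ are distinct and each differs from $V$ (as $W_i\in\mathscr W$ but $V\notin\mathscr W$), these $2m$ vertices are pairwise distinct. The only case needing a different list is $\dim V=n/2$: then $V\notin\mathscr S$ forces $V\in\mathscr W$, say $W_m=V$, and the list above would repeat the vertex $(V,V)$; instead I would use the cyclic list
\[ V,\;W_1,\;V,\;W_2,\;V,\;\dots,\;V,\;W_{m-1},\;V \]
of length $2m-1$ (which for $m=1$ collapses to the single vertex $(V,V)$ with its loop), whose consecutive pairs are $(V,W_1),(W_1,V),\dots,(V,W_{m-1}),(W_{m-1},V)$ together with $(V,V)$ from wrapping around, again pairwise distinct.

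The remaining checks are then immediate. Every vertex on the cycle is of the form $(V,W)$ or $(W,V)$ with $W\in\mathscr W$ (reading $(V,V)$ as an instance of both), and conversely every such vertex occurs, so the cycle covers exactly the vertices named in the statement; none of them equals $[0]=(\{0\},F^n)$ or $[1]=(F^n,\{0\})$ because $V$ is nontrivial and proper and every $W\in\mathscr W$ has $0<\dim W<n$, so the cycle lies in $\varTheta(M_n(F))\setminus\{[0],[1]\}$; and, as already noted, each of its edges has the form $(X_1,X_2)\to(X_2,X_3)$. I expect the only point requiring genuine care to be the bookkeeping in the case $\dim V=n/2$, where $V$ itself lies in $\mathscr W$ and one must drop a term from the list to avoid listing $(V,V)$ twice; everything else is a routine verification once the cyclic-list description is set up.
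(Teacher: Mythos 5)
Your proposal is correct and follows essentially the same construction as the paper: enumerate the admissible complements $W_1,\dots,W_m$, form the alternating cycle $(V,W_1)\to(W_1,V)\to(V,W_2)\to\cdots$, and in the exceptional case $\dim V=n/2$ (where $V$ itself lies in $\mathscr W$) collapse the repeated vertex $(V,V)$ to a single occurrence. The cyclic-list packaging and placing $V$ last instead of first are only cosmetic differences.
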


\begin{proof}
Let $\mathscr{W}=\left\{ W:W\textrm{ is subspace of }F^{n},\ \dim W+\dim V=n,\ W\notin\mathscr{S}\right\}$.
Since $\mathscr{W}$ is a finite set, we can enumerate all of its elements,
say $\mathscr{W}=\left\{ W_{1},W_{2},\ldots,W_{k}\right\} $. Then
the cycle 
\begin{align*}
\left(V,W_{1}\right) & \rightarrow  \left(W_{1},V\right)\rightarrow\left(V,W_{2}\right)\rightarrow\left(W_{2},V\right)\rightarrow\cdots\\
\cdots & \rightarrow  \left(V,W_{k}\right)\rightarrow\left(W_{k},V\right)\rightarrow\left(V,W_{1}\right)
\end{align*}
has all the desired properties except that it might not be simple.
Observe that it is simple unless $n$ is even and $\dim V=\frac{n}{2}$,
in which case $V$ is an element of $\mathfrak{\mathscr{W}}$, say $V=W_{1}$. In this
case, we replace $\left(V,W_{1}\right)\rightarrow\left(W_{1},V\right)\rightarrow\left(V,W_{2}\right)$
by $\left(V,V\right)\rightarrow\left(V,W_{2}\right)$ to make the
cycle simple.
\end{proof}

In what follows, we denote 
\[
S_{k,m}=\left\{ \left(V,W\right):V,W\textrm{ are subspaces of }F^{n},\ \dim V=k,\ \dim W=m\right\}.
\]

\begin{lemma}\label{lem:cycle--partition}
Let $n=k+m$ be a partition of $n$,
where $k,m\neq0$. Then there is a simple cycle in $\varTheta\left(M_{n}\left(F\right)\right)\setminus\left\{ \left[0\right],\left[1\right]\right\} $
that contains all the vertices in $S_{k,m}\cup S_{m,k}$, and every
edge in this cycle is of the form $\left(X_{1},X_{2}\right)\rightarrow\left(X_{2},X_{3}\right)$.
\end{lemma}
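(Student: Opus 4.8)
The plan is to recast the problem as finding an Eulerian circuit in an auxiliary digraph. Recall from the correspondence established above that the vertices of $\varTheta(M_n(F))$ are the pairs $(V,W)$ of subspaces of $F^n$ with $\dim V+\dim W=n$, and that an edge of the form $(X_1,X_2)\to(X_2,X_3)$ exists whenever $\dim X_1+\dim X_2=\dim X_2+\dim X_3=n$, the adjacency condition $X_2\supseteq X_2$ being automatic. Hence a closed walk $V_0\to V_1\to\cdots\to V_{\ell-1}\to V_0$ through subspaces of $F^n$ whose dimensions alternate between $k$ and $m$ produces the closed walk $(V_0,V_1)\to(V_1,V_2)\to\cdots\to(V_{\ell-1},V_0)\to(V_0,V_1)$ in $\varTheta(M_n(F))$ consisting only of edges of the prescribed form, and this is a \emph{simple} cycle exactly when the ordered pairs $(V_i,V_{i+1})$ are pairwise distinct.

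First I would build the auxiliary digraph $D$. If $k\neq m$, let $D$ have vertex set $\mathcal{A}\sqcup\mathcal{B}$, where $\mathcal{A}$ is the set of $k$-dimensional and $\mathcal{B}$ the set of $m$-dimensional subspaces of $F^n$, and put one directed edge $V\to W$ for each ordered pair in $(\mathcal{A}\times\mathcal{B})\cup(\mathcal{B}\times\mathcal{A})$. If $k=m$, let $D$ be the complete digraph with a loop at each vertex on the set $\mathcal{A}$ of $k$-dimensional subspaces, so its edges are all ordered pairs $(V,W)$ of $k$-subspaces, including those with $V=W$. In either case the edge set of $D$ is exactly $S_{k,m}\cup S_{m,k}$, and every vertex of $D$ has equal in- and out-degree (both equal to $\binom{n}{k}_q=\binom{n}{m}_q\geq 3$). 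Since $0<k<n$, both $\mathcal{A}$ and $\mathcal{B}$ are nonempty, so the underlying graph of $D$ is connected; being balanced with all degrees positive, $D$ therefore has an Eulerian circuit.

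Next I would pick an Eulerian circuit of $D$, written as a sequence of edges $V_0\to V_1\to\cdots\to V_{\ell-1}\to V_0$, where $\ell=|S_{k,m}\cup S_{m,k}|$ is the number of edges of $D$, and take $(V_0,V_1)\to(V_1,V_2)\to\cdots\to(V_{\ell-1},V_0)\to(V_0,V_1)$ as the claimed cycle. Then: each $(V_i,V_{i+1})$ is a vertex of $\varTheta(M_n(F))$ because consecutive subspaces along $D$ have dimensions summing to $n$; the vertices $(V_i,V_{i+1})$ for $0\le i\le\ell-1$ are pairwise distinct, being the $\ell$ distinct edges of $D$, so the cycle is simple and its vertex set is precisely $S_{k,m}\cup S_{m,k}$; every edge of the cycle has the required form $(X_1,X_2)\to(X_2,X_3)$ by construction; and no vertex of the cycle is $[0]=(\{0\},F^n)$ or $[1]=(F^n,\{0\})$, since $k,m\neq 0$ forces both coordinates of each vertex of $S_{k,m}\cup S_{m,k}$ to be nonzero, so the cycle lies in $\varTheta(M_n(F))\setminus\{[0],[1]\}$.

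I do not expect a genuine obstacle here; the only point requiring a little care is the case $k=m$, where $D$ has loops and, correspondingly, the cycle passes through vertices $(V,V)$ that carry loops in $\varTheta(M_n(F))$ — but these are used merely as ordinary vertices of a simple cycle, so this does no harm. As an alternative to introducing $D$, one could instead apply Lemma~\ref{lem:cycle--subspace} once for each $k$-dimensional subspace (using the exclusion set $\mathscr{S}$ to keep the resulting simple cycles pairwise vertex-disjoint) and then repeatedly merge two such cycles by rerouting a pair of edges $(V,W_1)\to(W_1,V)$ and $(V',W_1)\to(W_1,V')$ sharing a subspace $W_1$ into $(V,W_1)\to(W_1,V')$ and $(V',W_1)\to(W_1,V)$; the care there goes into checking that each reroute genuinely splices two cycles into one.
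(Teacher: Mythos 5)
Your proof is correct, but it takes a genuinely different route from the paper. The paper's proof invokes Lemma~\ref{lem:cycle--subspace} once for each $k$-dimensional subspace $V_i$, deletes one edge from each resulting simple cycle, and splices the paths so obtained into a single cycle, with a separate treatment of the case $k=m$ (there the exclusion set $\mathscr{S}$ is used to keep the per-subspace cycles vertex-disjoint). You instead observe that, restricted to $S_{k,m}\cup S_{m,k}$ and to edges of the shape $\left(X_{1},X_{2}\right)\rightarrow\left(X_{2},X_{3}\right)$, the graph is in effect the line digraph of your auxiliary digraph $D$ (all ordered pairs of subspaces of dimensions $k$ and $m$ respectively, with loops when $k=m$), so that a simple cycle through all these vertices is exactly an Eulerian circuit of $D$; since $D$ is balanced, all degrees are positive ($\binom{n}{k}_{q}=\binom{n}{m}_{q}$, the bound $\geq 3$ being unnecessary), and its underlying graph is connected, the directed Euler theorem applies. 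The supporting details all check out: dimensions of consecutive subspaces along a walk in $D$ sum to $n$, the adjacency condition $X_{2}\supseteq X_{2}$ is automatic, distinctness of the vertices of the induced cycle is precisely the single use of each edge of $D$ (in particular no edge of the cycle is a loop of $\varTheta\left(M_{n}\left(F\right)\right)$), and $\left[0\right],\left[1\right]$ are avoided because $k,m\neq0$. Your reduction is shorter, treats $k=m$ and $k\neq m$ uniformly, and renders Lemma~\ref{lem:cycle--subspace} superfluous, at the cost of importing the Eulerian-circuit criterion for balanced connected digraphs; the paper's splicing argument is more hands-on and self-contained but needs the auxiliary lemma and a case distinction. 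Your sketched alternative (merging the per-subspace cycles by rerouting pairs of edges through a shared $W_{1}$) is essentially the paper's construction in disguise.
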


\begin{proof}
Let $\mathscr{V}=\left\{ V_{1},V_{2},\ldots,V_{s}\right\} $ be the
set of all subspaces of $F^{n}$ whose dimension is $k$.

Suppose that $k\neq m$. By Lemma~\ref{lem:cycle--subspace}, for
every $i\in\left\{ 1,2,\ldots,s\right\} $ there is a simple cycle
$\mathcal{C}_{i}$ in $\varTheta\left(M_{n}\left(F\right)\right)\setminus\left\{ \left[0\right],\left[1\right]\right\} $
containing all the vertices of the form $\left(V_{i},W\right)$ or
$\left(W,V_{i}\right)$, where $W$ is an arbitrary subspace of dimension $m$.
Fix a subspace $X$ whose dimension is $m$. Then $X\neq V_{i}$ since $m\neq k$,
so there is an edge in $\mathcal{C}_{i}$ of the form $\left(V_{i},X\right)\rightarrow\left(X,V_{i}\right)$.
Remove this edge from $\mathcal{C}_{i}$ to obtain a simple path.
We denote this path by $\left(X,V_{i}\right)\rightarrow\mathscr{\mathcal{P}}_{i}\rightarrow\left(V_{i},X\right)$.
Observe that these paths for different $i$ are disjoint since $k\neq m$, and their union
contains all the vertices in $S_{k,m}\cup S_{m,k}$. In this case,
the cycle 
\begin{align*}
\left(X,V_{1}\right) & \rightarrow  \mathscr{\mathcal{P}}_{1}\rightarrow\left(V_{1},X\right)\rightarrow\left(X,V_{2}\right)\rightarrow\mathscr{\mathcal{P}}_{2}\rightarrow\left(V_{2},X\right)\rightarrow\cdots\\
\cdots & \rightarrow  \left(X,V_{s}\right)\rightarrow\mathscr{\mathcal{P}}_{s}\rightarrow\left(V_{s},X\right)\rightarrow\left(X,V_{1}\right)
\end{align*}
has the desired property.

Now suppose $k=m$. For every $i\in\left\{ 1,2,\ldots,s\right\} $, by Lemma~\ref{lem:cycle--subspace} (with $\mathscr{S}=\left\{ V_{1},V_{2},\ldots,V_{i-1}\right\} $),
there is a simple cycle $\mathcal{D}_{i}$ in $\varTheta\left(M_{n}\left(F\right)\right)\setminus\left\{ \left[0\right],\left[1\right]\right\} $
that contains all the vertices of the form $\left(V_{i},W\right)$
or $\left(W,V_{i}\right)$, where $W\notin\left\{ V_{1},V_{2},\ldots,V_{i-1}\right\} $.
The cycle $\mathcal{D}_{i}$ contains the edge $\left(V_{i},V_{s}\right)\rightarrow\left(V_{s},V_{i}\right)$.
Remove this edge from $\mathcal{D}_{i}$ to obtain a simple path and denote
this path by $\left(V_{s},V_{i}\right)\rightarrow\mathcal{R}_{i}\rightarrow\left(V_{i},V_{s}\right)$
(when $i=s$, this path is just one vertex $\left(V_{s},V_{s}\right)$).
Observe that these paths for different $i$ are disjoint and their union contains all
the vertices in $S_{k,k}$. In this case, the cycle 
\begin{align*}
\left(V_{s},V_{1}\right) & \rightarrow  \mathcal{R}_{1}\rightarrow\left(V_{1},V_{s}\right)\rightarrow\left(V_{s},V_{2}\right)\rightarrow\mathcal{R}_{2}\rightarrow\left(V_{2},V_{s}\right)\rightarrow\cdots\\
\cdots & \rightarrow \left(V_{s},V_{s-1}\right)\rightarrow\mathcal{R}_{s-1}\rightarrow\left(V_{s-1},V_{s}\right)\rightarrow\left(V_{s},V_{s}\right)\rightarrow\left(V_{s},V_{1}\right)
\end{align*}
has the desired property.
\end{proof}

We are now ready to describe Hamiltonian paths and cycles in the zero-divisor graph of a matrix ring.

\begin{theorem} Let $F$ be a finite field.
\begin{enumerate}
\item \label{enu:hamilton--cycle}
If $n\in\left\{ 2,3\right\} $ then $\varTheta\left(M_{n}\left(F\right)\right)\setminus\left\{ \left[0\right],\left[1\right]\right\}$
contains a Hamiltonian cycle.
\item If $n\geq4$ then $\varTheta\left(M_{n}\left(F\right)\right)\setminus\left\{ \left[0\right],\left[1\right]\right\} $
contains a Hamiltonian path but $\overline{\varTheta}\left(M_{n}\left(F\right)\right)\setminus\left\{ \left[0\right],\left[1\right]\right\} $
does not contain a Hamiltonian cycle.
\end{enumerate}
\end{theorem}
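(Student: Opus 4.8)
The plan is to build both the Hamiltonian cycle (for $n\in\{2,3\}$) and the Hamiltonian path (for $n\ge 4$) by stitching together the cycles produced in Lemma~\ref{lem:cycle--partition}, one for each partition $n=k+m$ with $k,m\ne 0$, and then handling the "diagonal" vertices $(V,V)$ with $\dim V=n/2$ (when $n$ is even) separately, since Lemma~\ref{lem:cycle--partition} already absorbs the case $k=m$ into a single cycle on $S_{k,k}$. The vertices of $\varTheta(M_n(F))\setminus\{[0],[1]\}$ are exactly the pairs $(V,W)$ with $\dim V+\dim W=n$ and both subspaces proper and nontrivial; these are partitioned according to the unordered pair of dimensions $\{k,m\}$, $k+m=n$, $1\le k\le m\le n-1$. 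For each such pair with $k<m$, Lemma~\ref{lem:cycle--partition} gives a simple cycle through $S_{k,m}\cup S_{m,k}$ using only edges of the form $(X_1,X_2)\to(X_2,X_3)$; for $k=m$ it gives a simple cycle through $S_{k,k}$ with the same edge shape.

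For $n=2$: the only partition is $1+1$, so Lemma~\ref{lem:cycle--partition} (case $k=m$) already yields a single Hamiltonian cycle on all of $S_{1,1}=V(\varTheta(M_2(F))\setminus\{[0],[1]\})$. For $n=3$: the partitions are $1+2$ (with $k\ne m$) and there is no even-splitting case; Lemma~\ref{lem:cycle--partition} gives one cycle on $S_{1,2}\cup S_{2,1}$, which is everything, so again we are done. Thus part~\ref{enu:hamilton--cycle} follows directly. For $n\ge 4$ I would merge the cycles $\mathcal{C}_{\{k,m\}}$ for the various unordered pairs into a single cycle (hence Hamiltonian path, or even cycle) by a standard cycle-splicing argument: if two cycles $\mathcal{C}$ and $\mathcal{C}'$ share no vertices but $\mathcal{C}$ contains an edge $a\to b$ and $\mathcal{C}'$ an edge $c\to d$ such that $a\to d$ and $c\to b$ are also edges of the graph, one can reroute $\mathcal{C}\cup\mathcal{C}'$ into one cycle. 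Here the edges in all our cycles have the form $(X_1,X_2)\to(X_2,X_3)$, and the adjacency condition is simply $W_1\supseteq V_2$; so given an edge $(X_1,X_2)\to(X_2,X_3)$ in one cycle and an edge $(Y_1,Y_2)\to(Y_2,Y_3)$ in another, I need the "crossed" pairs $(X_1,X_2)\to(Y_2,Y_3)$ and $(Y_1,Y_2)\to(X_2,X_3)$, i.e. $X_2\supseteq Y_2$ and $Y_2\supseteq X_2$, which forces $X_2=Y_2$ — too rigid. So instead I would splice along a common vertex: arrange the cycles so that consecutive ones (for dimension pairs that differ by moving one unit of dimension) share at least one vertex, then break each shared vertex and concatenate; alternatively, use an intermediary vertex reachable from one cycle and reaching into the next. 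Concretely, pick a flag $0\subsetneq U_1\subsetneq U_2\subsetneq\cdots\subsetneq U_{n-1}\subsetneq F^n$ with $\dim U_i=i$, so that $(U_{n-k},U_k)\in S_{n-k,k}$ lies on cycle $\mathcal{C}_{\{k,n-k\}}$ and $(U_{n-k},U_k)\to(U_k,*)$-type edges connect it to cycle $\mathcal{C}_{\{k,\,\cdot\}}$; the containments among the $U_i$ give the required edges to hop between consecutive cycles, and one finishes by an induction on the number of distinct dimension pairs, merging them one at a time.

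For the negative statement — that $\overline{\varTheta}(M_n(F))\setminus\{[0],[1]\}$ has no Hamiltonian cycle when $n\ge 4$ — I would look for a small vertex cut or an independent-set obstruction. The natural candidate: in the undirected graph, $(V,W)$ is adjacent to $(V',W')$ iff $W\supseteq V'$ or $W'\supseteq V$. Consider the set $A$ of vertices $(V,W)$ with $\dim W=1$ (so $\dim V=n-1$) together with the vertices of the form $(V,W)$ with $\dim V=1$; I expect that removing a suitably small set $S$ of vertices (for instance all $(V,W)$ with $\dim W\in\{1,n-1\}$, or the "middle-dimension" vertices) disconnects the graph into more than $|S|$ components, which rules out a Hamiltonian cycle by the classical necessary condition $c(G\setminus S)\le |S|$. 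More promising: the vertices $(V,W)$ with $1<\dim W<n-1$ each have, in the undirected graph restricted to vertices of extreme dimension, limited adjacency; one isolates a large independent set whose every vertex has all its neighbours inside a small set $S$, contradicting Hamiltonicity. The precise choice of $S$ and the component count is the step I expect to require the most care, since it must use $n\ge 4$ (for $n=3$ a Hamiltonian cycle exists even in the directed graph, hence in the undirected one too) — so the argument has to fail exactly at $n=3$, which suggests the relevant cut involves the jump from having only the pair $\{1,2\}$ of dimensions ($n=3$) to having at least two genuinely different unordered dimension pairs or a middle dimension ($n\ge 4$).

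The main obstacle I anticipate is the splicing bookkeeping in the $n\ge 4$ positive case: Lemma~\ref{lem:cycle--partition}'s cycles are disjoint across different dimension pairs, so one must manufacture the connecting edges from an explicitly chosen flag and verify that each merge keeps the result simple (no repeated vertices), especially around the even-dimension diagonal cycle on $S_{n/2,n/2}$, which has a slightly irregular structure near $(V,V)$. The counting argument for the non-existence of the undirected Hamiltonian cycle is the second delicate point, as it must be tight enough to distinguish $n\ge 4$ from $n=3$.
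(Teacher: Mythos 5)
Part (i) of your proposal is correct and is exactly the paper's argument: for $n\in\{2,3\}$ there is only one partition of $n$ into nonzero parts, so Lemma~\ref{lem:cycle--partition} already yields a Hamiltonian cycle. For $n\ge 4$, however, both halves of your plan stop short of a proof. On the positive side your flag idea is the right one, but the splicing you defer as ``bookkeeping'' is the actual content of the step, and it is done as follows: fix a chain $Y_1\subseteq\cdots\subseteq Y_{\lfloor n/2\rfloor}$ with $\dim Y_k=k$; since every edge of the cycle $\mathcal{E}_k$ through $S_{k,n-k}\cup S_{n-k,k}$ has the form $(X_1,X_2)\rightarrow(X_2,X_3)$, the edge leaving any vertex of $\mathcal{E}_k$ whose kernel equals $Y_k$ has pivot $Y_k$, i.e.\ $\mathcal{E}_k$ contains an edge $(X_k,Y_k)\rightarrow(Y_k,Z_k)$; delete it to obtain a path from $(Y_k,Z_k)$ to $(X_k,Y_k)$, and concatenate these paths in decreasing order of $k$ via the edges $(X_k,Y_k)\rightarrow(Y_{k-1},Z_{k-1})$, which exist because $Y_{k-1}\subseteq Y_k$. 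Note that the outcome is only a Hamiltonian path: your parenthetical ``(hence Hamiltonian path, or even cycle)'' cannot be salvaged, since a directed Hamiltonian cycle would give an undirected one in $\overline{\varTheta}\left(M_{n}\left(F\right)\right)\setminus\left\{\left[0\right],\left[1\right]\right\}$, contradicting the second half of the statement; concretely, the concatenated path ends at a vertex with $1$-dimensional kernel and cannot be closed up.

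The more serious gap is the negative part: you invoke the correct necessary condition $c(G\setminus S)\le\left|S\right|$ but never exhibit a working cut, and the candidates you float do not work. Removing all vertices with $\dim W\in\{1,n-1\}$, i.e.\ $S_{1,n-1}\cup S_{n-1,1}$, leaves the well-connected middle-dimension part, with far fewer components than $\left|S\right|$, and ``middle-dimension'' cuts fare no better. The cut that works is the asymmetric one: take $S=S_{1,n-1}$. Any undirected neighbour $(V',W')$ of a vertex $(V,W)\in S_{n-1,1}$ satisfies $V'\subseteq W$ or $V\subseteq W'$; since $\dim W=1$ and $\dim V=n-1$, after discarding $\left[0\right]$ and $\left[1\right]$ this forces $(V',W')\in S_{1,n-1}$. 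Hence every vertex of $S_{n-1,1}$ is isolated in $\overline{\varTheta}\left(M_{n}\left(F\right)\right)\setminus\left(\left\{\left[0\right],\left[1\right]\right\}\cup S_{1,n-1}\right)$, and since $n\ge4$ there is at least one further vertex (e.g.\ in $S_{2,n-2}$), so the number of components is at least $\left|S_{n-1,1}\right|+1=\left|S\right|+1$, contradicting Hamiltonicity. This also answers your question of why the argument must fail at $n=3$: there, deleting $S_{1,2}$ isolates the vertices of $S_{2,1}$ but nothing else remains, so one gets exactly $\left|S\right|$ components, which is consistent with a Hamiltonian cycle.
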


\begin{proof}
If $\left(V,W\right)$ is a vertex in $\varTheta\left(M_{n}\left(F\right)\right)\setminus\left\{ \left[0\right],\left[1\right]\right\} $,
then $n=\dim V+\dim W$ is a partition of $n$ with
nonzero parts.
There is only one partition of $2$ with nonzero parts, namely $2=1+1$,
and only one partition of $3$ with nonzero parts, namely $3=1+2$.
Hence, \ref{enu:hamilton--cycle} follows directly from Lemma~\ref{lem:cycle--partition}.

Let $n\geq4$ and denote $G=\varTheta\left(M_{n}\left(F\right)\right)\setminus\left\{ \left[0\right],\left[1\right]\right\} $ and
$\overline{G}=\overline{\varTheta}\left(M_{n}\left(F\right)\right)\setminus\left\{ \left[0\right],\left[1\right]\right\} $.
Observe that in $\overline{G}\setminus S_{1,n-1}$ there is no edge
incident with any vertex in $S_{n-1,1}$. Hence, every element of $S_{n-1,1}$
is an isolated vertex of $\overline{G}\setminus S_{1,n-1}$ and since
$n\geq4$ there is at least one vertex in $\overline{G}\setminus\left(S_{1,n-1}\cup S_{n-1,1}\right)$.
Hence, there are at least $\left|S_{n-1,1}\right|+1=\left|S_{1,n-1}\right|+1$
connected components in $\overline{G}\setminus S_{1,n-1}$, therefore
$\overline{G}$ does not contain a Hamiltonian cycle by \cite[Theorem~4.2]{Bon-Mur}).

By Lemma~\ref{lem:cycle--partition}, there is a simple cycle $\mathcal{E}_{k}$
in $G$ containing all vertices in $S_{k,m}\cup S_{m,k}$, where $k+m=n$.
Observe that cycles $\mathcal{E}_{1},\mathcal{E}_{2},\ldots,\mathcal{E}_{\left\lfloor \frac{n}{2}\right\rfloor }$
are disjoint and their union contains all the vertices of $G$. Choose
a chain of subspaces $Y_{1}\subseteq Y_{2}\subseteq\cdots\subseteq Y_{\left\lfloor \frac{n}{2}\right\rfloor }$,
where $\dim Y_{k}=k$. Then there is an edge in $\mathcal{E}_{k}$
of the form $\left(X_{k},Y_{k}\right)\rightarrow\left(Y_{k},Z_{k}\right)$
for some $X_{k},Z_{k}$. Remove this edge from $\mathcal{E}_{k}$
to obtain a path, which we denote by $\left(Y_{k},Z_{k}\right)\rightarrow\mathcal{Q}_{k}\rightarrow\left(X_{k},Y_{k}\right)$.
Then the path 
\begin{align*}
\left(Y_{\left\lfloor \frac{n}{2}\right\rfloor },Z_{\left\lfloor \frac{n}{2}\right\rfloor }\right) & \rightarrow  \mathcal{Q}_{\left\lfloor \frac{n}{2}\right\rfloor }\rightarrow\left(X_{\left\lfloor \frac{n}{2}\right\rfloor },Y_{\left\lfloor \frac{n}{2}\right\rfloor }\right)\rightarrow\\
\rightarrow\left(Y_{\left\lfloor \frac{n}{2}\right\rfloor -1},Z_{\left\lfloor \frac{n}{2}\right\rfloor -1}\right) & \rightarrow  \mathcal{Q}_{\left\lfloor \frac{n}{2}\right\rfloor -1}\rightarrow\left(X_{\left\lfloor \frac{n}{2}\right\rfloor -1},Y_{\left\lfloor \frac{n}{2}\right\rfloor -1}\right)\rightarrow\\
&\rightarrow\cdots\rightarrow\\
\rightarrow  \left(Y_{2},Z_{2}\right) & \rightarrow\mathcal{Q}_{2}\rightarrow\left(X_{2},Y_{2}\right)\rightarrow\\
\rightarrow  \left(Y_{1},Z_{1}\right) & \rightarrow\mathcal{Q}_{1}\rightarrow\left(X_{1},Y_{1}\right)
\end{align*}
is a Hamiltonian path in $G$.
\end{proof}

Next we determine the size of the largest directed clique in $\varTheta\left(M_{n}\left(F\right)\right)$. By a \emph{directed clique}, we mean a subgraph in which for any two (not necessarily
distinct) vertices $u$ and $v$, there is a directed edge from $u$ to $v$.
For a subspace $U\subseteq F^{n}$, let 
\[ K\left(U\right) =\{\left(V,W\right) \in V(\varTheta\left(M_{n}\left(F\right)\right)) : V\subseteq U\subseteq W\}. \]
It is easily checked that $K\left(U\right)$
is a directed clique in $\varTheta\left(M_{n}\left(F\right)\right)$.

\begin{lemma}\label{lem:max--clique}
Every directed clique in $\varTheta\left(M_{n}\left(F\right)\right)$
is contained in $K\left(U\right)$ for some subspace $U\subseteq F^{n}$.
\end{lemma}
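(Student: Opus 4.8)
The plan is to take an arbitrary directed clique $\mathcal{C}$ in $\varTheta\left(M_{n}\left(F\right)\right)$ and produce a single subspace $U$ with $V \subseteq U \subseteq W$ for every vertex $\left(V,W\right) \in \mathcal{C}$. Recall from the correspondence established in this section that an edge $\left(V_{1},W_{1}\right) \rightarrow \left(V_{2},W_{2}\right)$ exists exactly when $V_{2} \subseteq W_{1}$. So the clique condition translates into: for all $\left(V_{1},W_{1}\right), \left(V_{2},W_{2}\right) \in \mathcal{C}$ we have $V_{2} \subseteq W_{1}$; in particular (taking the two vertices equal) every vertex $\left(V,W\right)$ in the clique satisfies $V \subseteq W$, and (taking them distinct) the image of any vertex is contained in the kernel of any vertex.

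First I would define $U = \sum_{\left(V,W\right) \in \mathcal{C}} V$, the sum of all the images occurring in the clique. By construction $V \subseteq U$ for every vertex of $\mathcal{C}$, so the first containment is automatic. For the second containment, fix a vertex $\left(V_{0},W_{0}\right) \in \mathcal{C}$; I must show $U \subseteq W_{0}$. Since $U$ is the sum of the subspaces $V$ over vertices $\left(V,W\right)$ of $\mathcal{C}$, it suffices to show $V \subseteq W_{0}$ for each such $V$. But this is exactly the edge condition for the edge $\left(V,W\right) \rightarrow \left(V_{0},W_{0}\right)$, which holds because $\mathcal{C}$ is a directed clique (this edge is required even when $\left(V,W\right) = \left(V_{0},W_{0}\right)$, giving $V \subseteq W_{0}$ in that case too). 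Hence $U \subseteq W_{0}$, and therefore $V_{0} \subseteq U \subseteq W_{0}$, so $\left(V_{0},W_{0}\right) \in K\left(U\right)$. Since $\left(V_{0},W_{0}\right)$ was an arbitrary vertex of $\mathcal{C}$, we conclude $\mathcal{C} \subseteq K\left(U\right)$, with the understanding that if $\mathcal{C}$ is empty we may take $U$ to be any subspace, say $0$.

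Honestly, there is no serious obstacle here: the entire content is bookkeeping with the image/kernel dictionary and the translation of "directed edge" into "kernel contains image." The only point that requires the tiniest bit of care is making sure the diagonal edges $\left(V,W\right) \rightarrow \left(V,W\right)$ of the clique are used — these are what force $V \subseteq W$ and hence make the two halves of the argument fit together; and checking that $U$ so defined is genuinely one of the subspaces of $F^{n}$ (it is, being a sum of subspaces), so that $K\left(U\right)$ is defined. One could alternatively define $U$ as the intersection $\bigcap_{\left(V,W\right) \in \mathcal{C}} W$ of all the kernels and run the dual argument; either choice works, and I would present whichever reads more cleanly alongside the already-verified observation that $K\left(U\right)$ is itself a directed clique.
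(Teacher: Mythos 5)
Your argument is correct and is essentially the same as the paper's proof, which simply takes the dual choice $U=\bigcap_{(V,W)\in\mathcal{C}}W$ (the intersection of the kernels) that you mention at the end, and notes $V_j\subseteq W_i$ for all pairs forces $V_j\subseteq U\subseteq W_j$. The diagonal/loop edges are indeed the point that makes both containments hold, and your sum-of-images version handles this just as the paper's intersection-of-kernels version does.
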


\begin{proof}
Let $K=\left\{ \left(V_{i},W_{i}\right):1\leq i\leq k \right\} $ be any directed clique in $\varTheta\left(M_{n}\left(F\right)\right)$. Take $U=\bigcap_{i=1}^{k}W_{i}$. Since
$K$ is a directed clique, we have $V_{j}\subseteq W_{i}$ for all
$i,j\in\left\{ 1,2,\ldots,k\right\} $. This implies $V_{j}\subseteq U \subseteq W_{j}$ for every $j\in\left\{ 1,2,\ldots,k\right\}$. We conclude that $K \subseteq K(U)$.
\end{proof}

\begin{proposition}
The size of the largest directed clique in $\varTheta\left(M_{n}\left(F_q\right)\right)$ is 
\[ \sum_{i=0}^{\left\lfloor \frac{n}{2}\right\rfloor}\binom{\left\lfloor \frac{n}{2}\right\rfloor}{i}_{q}\binom{\left\lceil \frac{n}{2}\right\rceil}{i}_{q} \]
and any directed clique of this size is of the form $K(U)$, where $\dim U$ is equal to $\left\lfloor \frac{n}{2}\right\rfloor $ or $\left\lceil \frac{n}{2}\right\rceil$.
\end{proposition}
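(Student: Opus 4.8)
The plan is to combine Lemma~\ref{lem:max--clique} with an explicit count of $|K(U)|$ and an analysis of how that count varies with $\dim U$.

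First I would compute $|K(U)|$. If $\dim U=d$, then $(V,W)\in K(U)$ exactly when $V\subseteq U\subseteq W$ and $\dim V+\dim W=n$; for a fixed value $\dim V=i$ there are $\binom{d}{i}_q$ choices for $V$, while the $(n-i)$-dimensional subspaces $W$ containing $U$ correspond to the $((n-i)-d)$-dimensional subspaces of $F_q^n/U\cong F_q^{n-d}$, of which there are $\binom{n-d}{(n-i)-d}_q=\binom{n-d}{i}_q$ by (\ref{eq:simetrija}). Hence $|K(U)|=\psi(\dim U)$, where
\[ \psi(d):=\sum_{i\ge 0}\binom{d}{i}_q\binom{n-d}{i}_q . \]
By Lemma~\ref{lem:max--clique} this reduces the statement to determining $\max_{0\le d\le n}\psi(d)$ together with the set of $d$ attaining it: every directed clique lies in some $K(U)$, while each $K(U)$ is itself a directed clique, so the maximum size of a directed clique equals $\max_d\psi(d)$; and if $K$ is a directed clique of that size, then $K\subseteq K(U)$ for some $U$ with $|K(U)|\ge|K|=\max_d\psi(d)\ge\psi(\dim U)=|K(U)|$, forcing $K=K(U)$ with $\psi(\dim U)$ maximal.

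The heart of the argument is to show that $\psi$ is strictly increasing on $\{0,1,\dots,\left\lfloor\frac{n}{2}\right\rfloor\}$; since $\psi(d)=\psi(n-d)$ by (\ref{eq:simetrija}), this will show that $\psi$ attains its maximum exactly at $d\in\{\left\lfloor\frac{n}{2}\right\rfloor,\left\lceil\frac{n}{2}\right\rceil\}$. To compare $\psi(d)$ with $\psi(d-1)$ for $1\le d\le\left\lfloor\frac{n}{2}\right\rfloor$, I would apply the identity $\binom{m}{i}_q=q^i\binom{m-1}{i}_q+\binom{m-1}{i-1}_q$ (which follows from (\ref{eq:simetrija}) and (\ref{eq:rekurzija})) to $\binom{d}{i}_q$ inside $\psi(d)$ and to $\binom{n-d+1}{i}_q$ inside $\psi(d-1)$. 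The two partial sums carrying the factor $q^i$ are identical and cancel, and after shifting the summation index one is left with
\[ \psi(d)-\psi(d-1)=\sum_{i\ge 0}\left(\binom{d-1}{i}_q\binom{n-d}{i+1}_q-\binom{d-1}{i+1}_q\binom{n-d}{i}_q\right). \]
Setting $e=d-1$ and $f=n-d$, so that $e<f$ because $2d\le n$, I would then check that every summand is nonnegative and the sum is positive: the summands with $i>e$ vanish, the $i=e$ summand equals $\binom{f}{e+1}_q=\binom{n-d}{d}_q>0$, and for $0\le i\le e-1$ the summand equals $\binom{e}{i}_q\binom{f}{i}_q\cdot\frac{q^{f-i}-q^{e-i}}{q^{i+1}-1}>0$, using $\binom{m}{i+1}_q=\binom{m}{i}_q\cdot\frac{q^{m-i}-1}{q^{i+1}-1}$ together with $f>e$. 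Hence $\psi(d)>\psi(d-1)$.

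Assembling everything, $\max_d\psi(d)=\psi(\left\lfloor\frac{n}{2}\right\rfloor)=\sum_{i=0}^{\left\lfloor\frac{n}{2}\right\rfloor}\binom{\left\lfloor\frac{n}{2}\right\rfloor}{i}_q\binom{\left\lceil\frac{n}{2}\right\rceil}{i}_q$ (the terms with $i>\left\lfloor\frac{n}{2}\right\rfloor$ vanish, and $n-\left\lfloor\frac{n}{2}\right\rfloor=\left\lceil\frac{n}{2}\right\rceil$), which is the claimed value; and the directed cliques of this size are precisely the sets $K(U)$ with $\dim U\in\{\left\lfloor\frac{n}{2}\right\rfloor,\left\lceil\frac{n}{2}\right\rceil\}$. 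The one step that I expect to require genuine care is the monotonicity of $\psi$: the sum $\psi(d)$ has no convenient closed form (it is a $q$-analogue of the Vandermonde identity $\sum_i\binom{d}{i}\binom{n-d}{i}=\binom{n}{d}$ with the $q$-Vandermonde weights stripped off), so the location of the maximum cannot simply be read off, and it is the telescoping identity above (or the analogous one obtained from (\ref{eq:rekurzija}) alone) that makes the comparison $\psi(d-1)<\psi(d)$ transparent. The remaining parts are routine bookkeeping with Lemma~\ref{lem:max--clique} and the standard $q$-binomial identities.
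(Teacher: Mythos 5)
Your proposal is correct, and its skeleton coincides with the paper's: both invoke Lemma~\ref{lem:max--clique} to reduce the problem to maximizing $|K(U)|$, and both arrive at the same count $|K(U)|=\sum_{i}\binom{u}{i}_q\binom{n-u}{i}_q$ with $u=\dim U$. The difference lies in how the maximum over $u$ is located. The paper proves the \emph{termwise} inequality $\binom{u}{i}_{q}\binom{n-u}{i}_{q}\leq\binom{\left\lfloor \frac{n}{2}\right\rfloor }{i}_{q}\binom{\left\lceil \frac{n}{2}\right\rceil }{i}_{q}$ by expanding the product formula \eqref{eq:razpisano}, reducing it to $q^{n-u+r}+q^{u+r}\geq q^{n-\left\lfloor \frac{n}{2}\right\rfloor+r}+q^{\left\lfloor \frac{n}{2}\right\rfloor +r}$, and minimizing $f(x)=q^{n-x+r}+q^{x+r}$ over the integers with a short calculus argument. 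You instead prove strict monotonicity of the whole sum, $\psi(d-1)<\psi(d)$ for $1\leq d\leq\left\lfloor \frac{n}{2}\right\rfloor$, via the $q$-Pascal identity $\binom{m}{i}_q=q^i\binom{m-1}{i}_q+\binom{m-1}{i-1}_q$ (a correct consequence of \eqref{eq:simetrija} and \eqref{eq:rekurzija}), cancellation of the $q^i$-weighted parts, and a term-by-term positivity check using $\binom{m}{i+1}_q=\binom{m}{i}_q\cdot\frac{q^{m-i}-1}{q^{i+1}-1}$; all of these computations check out, including the vanishing of terms with $i>e$ and the positive $i=e$ term $\binom{n-d}{d}_q$. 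What your route buys is an explicitly strict comparison between consecutive values of $\psi$, which you then use to give a cleaner and more complete justification of the second assertion (a maximum-size clique $K\subseteq K(U)$ forces $K=K(U)$ and $\dim U\in\{\left\lfloor \frac{n}{2}\right\rfloor,\left\lceil \frac{n}{2}\right\rceil\}$) — a point the paper's proof leaves largely implicit, since its displayed inequalities are non-strict; what the paper's route buys is a slightly more self-contained argument that never needs a Pascal-type recurrence manipulation, only the product formula and an elementary convexity fact about $q^x+q^{n-x}$.
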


\begin{proof}
By Lemma~\ref{lem:max--clique}, it suffices to determine the maximum of $|K(U)|$, where $U$ is a subspace of $F_{q}^{n}$.
Let $u=\dim U$. Subspaces containing $U$ are in a bijective correspondence with the subspaces of $F_q^n/U$, hence
\[
\left|K\left(U\right)\right|=\sum_{i=0}^{\min\{u,n-u\}}\binom{u}{i}_{q}\binom{n-u}{i}_{q}.
\]
Due to the symmetry in the formula for $|K(U)|$ we may assume $u \leq \left\lfloor \frac{n}{2}\right\rfloor$. In this case $\min\{u,n-u\}=u \leq \left\lfloor \frac{n}{2}\right\rfloor$.
Since $q$-binomial coefficients are nonnegative, it thus suffices to show that
\begin{equation}
\binom{u}{i}_{q}\binom{n-u}{i}_{q}\leq\binom{\left\lfloor \frac{n}{2}\right\rfloor }{i}_{q}\binom{n-\left\lfloor \frac{n}{2}\right\rfloor }{i}_{q}\label{eq:1-2}
\end{equation}
for all $i\in\left\{ 0,1,\ldots,u\right\}$.
Writing out $q$-binomial coefficients as in \eqref{eq:razpisano} and multiplying the inequality
by $\prod_{r=0}^{i-1}\left(q^{i}-q^{r}\right)^{2}$, we get
\[
\prod_{r=0}^{i-1}\left(q^{u}-q^{r}\right)\left(q^{n-u}-q^{r}\right)\leq\prod_{r=0}^{i-1}\left(q^{\left\lfloor \frac{n}{2}\right\rfloor }-q^{r}\right)\left(q^{n-\left\lfloor \frac{n}{2}\right\rfloor }-q^{r}\right).
\]
So it suffices to show that
\[
\left(q^{u}-q^{r}\right)\left(q^{n-u}-q^{r}\right)\leq\left(q^{\left\lfloor \frac{n}{2}\right\rfloor }-q^{r}\right)\left(q^{n-\left\lfloor \frac{n}{2}\right\rfloor }-q^{r}\right)
\]
for all $r\in\left\{ 0,1,\ldots,u-1\right\}$.
Multiplying out both sides of the last inequality and simplifying, we get
\[
q^{n-u+r}+q^{u+r}\geq q^{n-\left\lfloor \frac{n}{2}\right\rfloor+r}+q^{\left\lfloor \frac{n}{2}\right\rfloor +r}.
\]
We thus need to determine the minimum on the set of integers of the function $f \colon \mathbb{\mathbb{R}\to R}$ defined by $f\left(x\right)=q^{n-x+r}+q^{x+r}$.
Observe that $f'\left(x\right)=\left(q^{x+r}-q^{n-x+r}\right)\ln q$. From this it is easy to see that $f$ has a local minimum at $x=\frac{n}{2}$. In addition, $f'\left(x\right)<0$
for $x<\frac{n}{2}$ and $f'\left(x\right)>0$ for $x>\frac{n}{2}$. Hence, if we
restrict the domain of $f$ to integers, then its global minimum is equal to $f\left(\left\lfloor \frac{n}{2} \right\rfloor \right)=f\left(\left\lceil \frac{n}{2}\right\rceil \right)$ as required.
\end{proof}

The following question remains open.

\begin{question}
What is the size of the largest clique in $\overline{\varTheta}(M_n(F_q))$?
\end{question}

Recall that a \emph{dominating set} in an undirected graph $G$ is a subset $D \subseteq V(G)$ such that every vertex of $G$ is either in $D$ or is adjacent to at least one vertex in $D$.
The least possible size of a dominating set is called the \emph{domination number} of $G$. In next two propositions we determine the domination number of graph $\overline{\varTheta}(M_n(F_q))$ as well as what could be called the ``directed domination number'' of $\varTheta(M_n(F_q))$.

\begin{proposition}\label{prop:dominating}
Let $n\geq3$ and $R=M_{n}\left(F_{q}\right)$.
There is a subset $D\subseteq V\left(\varTheta\left(R\right)\right)$
with $\binom{n}{1}_{q}$ elements such that:
\begin{enumerate}
\item\label{enu:edge_dominating} $D$ is a dominating set for $\overline{\varTheta}\left(R\right)\setminus\left\{ \left[0\right],\left[1\right]\right\}$ of the least possible size,
\item\label{enu:edge--v--D} for every vertex $v\in V\left(\varTheta\left(R\right)\right)\setminus D$
there is an edge in $\varTheta\left(R\right)$ with source $v$ and
target in $D$,
\item\label{enu:edge--D--v} for every vertex $v\in V\left(\varTheta\left(R\right)\right)\setminus D$
there is an edge in $\varTheta\left(R\right)$ with target $v$ and
source in $D$.
\end{enumerate}
\end{proposition}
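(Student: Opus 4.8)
The plan is to write down $D$ explicitly, verify \ref{enu:edge--v--D} and \ref{enu:edge--D--v} directly from the subspace description of the edges, and then obtain \ref{enu:edge_dominating} by matching an easy upper bound against a counting lower bound for the domination number. By \eqref{eq:simetrija} there are equally many lines (one-dimensional subspaces) and hyperplanes in $F_{q}^{n}$, namely $\binom{n}{1}_{q}$ of each; fix a bijection $\phi$ from the set of lines to the set of hyperplanes of $F_{q}^{n}$ and put
\[ D=\set{\,(L,\phi(L))\,}{L\text{ a line of }F_{q}^{n}}. \]
Then $|D|=\binom{n}{1}_{q}$, and since $n\geq3$ both coordinates of every element of $D$ have dimension strictly between $0$ and $n$, so $D\subseteq V(\varTheta(R))\setminus\{[0],[1]\}$.

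Recall that $(V_{1},W_{1})\to(V_{2},W_{2})$ is an edge of $\varTheta(R)$ if and only if $W_{1}\supseteq V_{2}$, and that every vertex $v=(V,W)$ other than $[0]$ and $[1]$ satisfies $1\leq\dim V\leq n-1$ and $1\leq\dim W\leq n-1$. For \ref{enu:edge--v--D}, pick a line $L\subseteq W$ (possible as $\dim W\geq1$); then $(L,\phi(L))\in D$ and $W\supseteq L$, so $v\to(L,\phi(L))$ is an edge of $\varTheta(R)$. For \ref{enu:edge--D--v}, pick a hyperplane $H\supseteq V$ (possible as $\dim V\leq n-1$); since $\phi$ is surjective, $H=\phi(L)$ for some line $L$, and then $(L,\phi(L))\in D$ with $\phi(L)\supseteq V$, so $(L,\phi(L))\to v$ is an edge of $\varTheta(R)$. (For $v=[0]$ both statements are trivial, as $[0]$ is a source and a target of every vertex of $\varTheta(R)$; the vertex $v=[1]$, whose only in- and out-neighbour is $[0]\notin D$, is to be excluded from \ref{enu:edge--v--D}--\ref{enu:edge--D--v}.)

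For \ref{enu:edge_dominating}, the upper bound is immediate: by the argument for \ref{enu:edge--v--D}, every vertex of $\overline{\varTheta}(R)\setminus\{[0],[1]\}$ is in $D$ or is adjacent, via an edge inherited from $\varTheta(R)$ with both endpoints outside $\{[0],[1]\}$, to a vertex of $D$; hence $D$ is a dominating set of $\overline{\varTheta}(R)\setminus\{[0],[1]\}$. The main point is the matching lower bound. For it I would use the set $T=S_{n-1,1}$, of size $\binom{n}{1}_{q}^{2}$, viewed as a $\binom{n}{1}_{q}\times\binom{n}{1}_{q}$ grid with rows indexed by hyperplanes and columns by lines, the cell in row $H$ and column $L$ being the vertex $(H,L)$. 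A dimension count, using $n\geq3$, shows that the only vertices of $\overline{\varTheta}(R)\setminus\{[0],[1]\}$ that belong to $T$ or are adjacent in $\overline{\varTheta}(R)$ to a vertex of $T$ are the vertices of $T$ itself (each adjacent to no other vertex of $T$) and the vertices of $S_{1,n-1}$, where a vertex $(L,H)\in S_{1,n-1}$ is adjacent within $T$ precisely to the cells of row $H$ and of column $L$. Hence, if $D'$ is any dominating set of $\overline{\varTheta}(R)\setminus\{[0],[1]\}$ and $a=|D'\cap T|$, $b=|D'\cap S_{1,n-1}|$, then the $a$ ``single cells'' and the $b$ ``crosses'' (a full row together with a full column each) must cover all $\binom{n}{1}_{q}^{2}$ cells of the grid. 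When $b<\binom{n}{1}_{q}$ the crosses leave at least $(\binom{n}{1}_{q}-b)^{2}$ cells uncovered, so $a\geq(\binom{n}{1}_{q}-b)^{2}$ and therefore
\[ |D'|\geq a+b\geq(\tbinom{n}{1}_{q}-b)^{2}+b\geq\tbinom{n}{1}_{q} \]
(the last inequality since $t^{2}\geq t$ for every integer $t\geq1$); and if $b\geq\binom{n}{1}_{q}$ then $|D'|\geq b\geq\binom{n}{1}_{q}$. So the domination number of $\overline{\varTheta}(R)\setminus\{[0],[1]\}$ equals $\binom{n}{1}_{q}$, attained by $D$, which gives \ref{enu:edge_dominating}.

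The substantive step is the lower bound: recognizing $S_{n-1,1}$ as the appropriate test set and casting the estimate as a rook-type covering bound on a square grid. The underlying dimension bookkeeping --- ruling out that vertices of intermediate first-coordinate dimension, or the exceptional vertices $[0]$ and $[1]$, can help cover $S_{n-1,1}$, and pinning down that a vertex of $S_{1,n-1}$ covers exactly one row and one column --- is routine but must be done with care, and it is precisely there that the hypothesis $n\geq3$ enters.
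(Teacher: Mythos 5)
Your proposal is correct, and for the construction and parts \ref{enu:edge--v--D}--\ref{enu:edge--D--v} it coincides with the paper's proof: the paper likewise takes $D=\{(V_i,W_i)\}$ for an arbitrary pairing of the $\binom{n}{1}_{q}$ lines with the $\binom{n}{1}_{q}$ hyperplanes and verifies these parts exactly as you do (and, like you, it only treats vertices other than $[0]$ and $[1]$, so your explicit remark that $[1]$ must be excluded matches the paper's reading of the statement). Where you genuinely differ is the lower bound. The paper argues by a dichotomy: if every hyperplane $W_i$ occurs as a second coordinate (``kernel'') of some element of the dominating set, then its size is at least $\binom{n}{1}_{q}$ at once; if some $W_i$ is not a kernel, then by examining how each vertex $(W_i,V_j)$ is dominated one sees that every line $V_j$ must occur as a first or second coordinate of an element of the set, and since $n\geq 3$ no vertex has both coordinates $1$-dimensional, again the size is at least $\binom{n}{1}_{q}$. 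You instead use all of $S_{n-1,1}$ as a test set: only vertices of $S_{1,n-1}$ can dominate its cells (each covering one row and one column of the $\binom{n}{1}_{q}\times\binom{n}{1}_{q}$ grid), cells in the dominating set cover only themselves, and the rook-type estimate $\bigl(\binom{n}{1}_{q}-b\bigr)^{2}+b\geq\binom{n}{1}_{q}$ finishes. Both arguments rest on the same dimension analysis---every in- or out-neighbour of a vertex of $S_{n-1,1}$, other than $[0]$, lies in $S_{1,n-1}$, which is exactly where $n\geq 3$ enters---so the difference is only in the final count: the paper's dichotomy is shorter and needs no inequality, while your grid formulation is more quantitative and makes the extremal structure (crosses versus single cells) visible. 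The ``dimension bookkeeping'' you defer is indeed routine and checks out, so there is no gap.
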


\begin{proof}
Let $\mathscr{V}=\left\{ V_{1},V_{2},\ldots,V_{k}\right\} $ be the
set of all $1$-dimensional subspaces of $F_{q}^{n}$ and let $\mathscr{W}=\left\{ W_{1},W_{2},\ldots,W_{k}\right\} $
be the set of all $\left(n-1\right)$-dimensional subspaces of $F_{q}^{n}$.
These sets indeed have the same size $k=\binom{n}{1}_{q}$.

Let $D$ be any dominating set for $\overline{\varTheta}\left(R\right)\setminus\left\{ \left[0\right],\left[1\right]\right\} $.
We will say that a subspace $X\subseteq F_{q}^{n}$ is an image in
$D$ (respectively a kernel in $D$) if $\left(X,Y\right)\in D$
(respectively $\left(Y,X\right)\in D$) for some subspace $Y\subseteq F_{q}^{n}$.

Fix some $i\in\left\{ 1,2,\ldots,k\right\} $. Suppose first that for some $i \in \{1,2,\ldots,k\}$, $W_{i}$ is
not a kernel in $D$. We claim that, in this case, for every $j\in\left\{ 1,2,\ldots,k\right\} $,
$V_{j}$ is either an image or a kernel in $D$. To see this, suppose $W_{i}$ is
not a kernel in $D$. If $\left(W_{i},V_{j}\right)\in D$
then $V_{j}$ is a kernel in $D$, otherwise $\left(W_{i},V_{j}\right)$
is adjacent to some vertex $\left(X,Y\right)\in D$. There is no edge
$\left(X,Y\right)\rightarrow\left(W_{i},V_{j}\right)$ since this
would imply $W_{i}\subseteq Y$ and hence $W_{i}=Y$ due to dimension
of $W_{i}$ being $n-1$. This would contradict the assumption that
$W_{i}$ is not a kernel in $D$. Hence, there is an edge $\left(W_{i},V_{j}\right)\rightarrow\left(X,Y\right)$
which implies $X\subseteq V_{j}$ and thus $X=V_{j}$ due to the dimension
of $V_{j}$ being $1$. This means that $V_{j}$ is an image in $D$, which proves our claim.
We conclude that in this case $\left|D\right|\geq k=\binom{n}{1}_{q}$, because vertices $\left(W_{s},V_{t}\right),\left(V_{t},W_{s}\right)$
for $s,t\in\left\{ 1,2,\ldots,k\right\} $ are all distinct due to our assumption
$n\geq3$.
On the other hand, if $W_{i}$ is a kernel in $D$ for all $i\in\left\{ 1,2,\ldots,k\right\} $,
then clearly $\left|D\right|\geq k=\binom{n}{1}_{q}$.

To finish the proof, we construct a set $D$ with $\binom{n}{1}_{q}$ vertices that satisfies \ref{enu:edge--v--D} and \ref{enu:edge--D--v}, and hence also \ref{enu:edge_dominating}.
Let $D=\left\{ \left(V_{i},W_{i}\right):i\in\left\{ 1,2,\ldots,k\right\} \right\} $.
If $\left(X,Y\right)$ is a vertex distinct from $\left[0\right]$
and $\left[1\right]$, then $X$ is a proper subspace of $F_{q}^{n}$
and $Y$ is a nonzero subspace of $F_{q}^{n}$. Hence, $X\subseteq W_{i}$ for some
$i$ and $V_{j}\subseteq Y$ for some $j$, so we have a path $\left(V_{i},W_{i}\right)\rightarrow\left(X,Y\right)\rightarrow\left(V_{j},W_{j}\right)$.
\end{proof}

\begin{proposition}
Let $n=2$ and $R=M_{n}\left(F_{q}\right)$. There is a subset $D\subseteq V\left(\varTheta\left(R\right)\right)$,
with $\binom{n}{1}_{q}-1$ elements and a vertex $d\in V\left(\varTheta\left(R\right)\right)\setminus D$
such that:
\begin{enumerate}
\item $D$ is a dominating set for $\overline{\varTheta}\left(R\right)\setminus\left\{ \left[0\right],\left[1\right]\right\}$ of the least possible size,
\item\label{enu:edge--v--D'} $D'=D\cup\left\{ d\right\} $ is a subset
of $V\left(\varTheta\left(R\right)\right)$ of the least possible size
 such that for every vertex $v\in V\left(\varTheta\left(R\right)\right)\setminus D'$
there is an edge in $\varTheta\left(R\right)$ with source $v$ and
target in $D'$,
\item\label{enu:edge--D'--v} $D'=D\cup\left\{ d\right\} $ is a subset
of $V\left(\varTheta\left(R\right)\right)$ of the least possible size
 such that for every vertex $v\in V\left(\varTheta\left(R\right)\right)\setminus D'$
there is an edge in $\varTheta\left(R\right)$ with target $v$ and
source in $D'$.
\end{enumerate}
\end{proposition}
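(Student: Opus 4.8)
The plan is to discard the two vertices $[0]=(0,F_q^{2})$ and $[1]=(F_q^{2},0)$ and to work entirely inside $\varTheta(R)\setminus\{[0],[1]\}$; conditions (ii) and (iii) are to be read in this subgraph as well, since $[0]$ has an edge to and from every vertex of $\varTheta(R)$ and would otherwise trivialize them. By the correspondence established above, the remaining vertices are exactly the ordered pairs $(a,b)$ of lines of $F_q^{2}$, and there are $\binom{2}{1}_{q}=q+1$ lines. There is an edge $(a,b)\to(c,d)$ precisely when $b=c$, and $(a,b)$ and $(c,d)$ are joined by an edge of $\overline{\varTheta}(R)$ precisely when $b=c$ or $a=d$. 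For a set $S$ of such vertices, write $S_{1}$ and $S_{2}$ for the sets of lines occurring as, respectively, a first and a second coordinate of some element of $S$.

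First I would establish the lower bound $\binom{2}{1}_{q}-1=q$ for (i). If $D$ is a dominating set of $\overline{\varTheta}(R)\setminus\{[0],[1]\}$, then a vertex $(a,b)$ with $a\notin D_{2}$ and $b\notin D_{1}$ has no neighbour in $D$, hence lies in $D$; thus $A\times B\subseteq D$, where $A$ (resp. $B$) denotes the set of lines not in $D_{2}$ (resp. not in $D_{1}$). If $A$ and $B$ are both nonempty, reading off coordinates of an element of $A\times B$ gives $A\subseteq D_{1}$ and $B\subseteq D_{2}$; combined with $A\cap D_{2}=\emptyset=B\cap D_{1}$, this shows that $A$, $B$ and $C:=D_{1}\cap D_{2}$ are pairwise disjoint and together exhaust all $q+1$ lines. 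Since $C\subseteq D_{1}$, the set $D$ contains at least $|C|$ elements whose first coordinate lies in $C$, and these lie outside $A\times B$; therefore $|D|\ge|A|\,|B|+|C|=|A|\,|B|+(q+1-|A|-|B|)=(|A|-1)(|B|-1)+q\ge q$. If instead $A=\emptyset$ (resp. $B=\emptyset$) then $D_{2}$ (resp. $D_{1}$) consists of all lines, so $|D|\ge q+1$. Hence $|D|\ge q$ in every case.

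Next I would exhibit the extremal sets. Fix an enumeration $\ell_{1},\dots,\ell_{q+1}$ of the lines and put
\[
D=\{(\ell_{i},\ell_{i}):1\le i\le q-1\}\cup\{(\ell_{q},\ell_{q+1})\},\qquad d=(\ell_{q+1},\ell_{q}),\qquad D'=D\cup\{d\}.
\]
Then $|D|=q$, $d\notin D$, and $|D'|=q+1$. A direct check gives $D_{1}=\{\ell_{1},\dots,\ell_{q}\}$ and $D_{2}=\{\ell_{1},\dots,\ell_{q-1},\ell_{q+1}\}$, so the only vertex $(a,b)$ with $a\notin D_{2}$ and $b\notin D_{1}$ is $(\ell_{q},\ell_{q+1})\in D$; together with the previous paragraph this shows that $D$ is a dominating set of least size, giving (i). Furthermore $D'_{1}=D'_{2}=\{\ell_{1},\dots,\ell_{q+1}\}$ is the set of all lines, so for each vertex $(a,b)\notin D'$ we have $b\in D'_{1}$, hence some $(b,e)\in D'$ and an edge $(a,b)\to(b,e)$ into $D'$, which verifies the edge condition in (ii); dually $a\in D'_{2}$ yields $(e,a)\in D'$ and an edge $(e,a)\to(a,b)$, verifying (iii).

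Finally I would check that $q+1$ is the least possible size in (ii) and (iii). If a set $D''$ of vertices of $\varTheta(R)\setminus\{[0],[1]\}$ has the property in (ii) and some line $b_{0}$ is not in $D''_{1}$, then every vertex $(a,b_{0})$ has no edge into $D''$ and hence lies in $D''$; in particular $(b_{0},b_{0})\in D''$, which forces $b_{0}\in D''_{1}$, a contradiction. Thus $D''_{1}$ is the set of all $q+1$ lines and $|D''|\ge q+1$; the argument for (iii) is identical with first and second coordinates swapped. The step I expect to be the main obstacle is the sharp lower bound $|D|\ge q$ in (i): the crude estimates $(q+1-|D_{1}|)(q+1-|D_{2}|)\le|D|$ and $|D_{1}|,|D_{2}|\le|D|$ are not tight enough, and one really needs the structural observation that the "uncovered" coordinate sets $A$ and $B$ are disjoint and that the overlap $C=D_{1}\cap D_{2}$ forces additional elements of $D$, which is exactly what produces the extra term $(|A|-1)(|B|-1)$.
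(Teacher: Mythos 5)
Your proof is correct and follows essentially the same route as the paper: identify the nontrivial vertices with ordered pairs of lines of $F_q^2$, exhibit an almost-diagonal set $D$ of size $q$ plus one extra vertex $d$, and prove the lower bounds by counting which lines occur as first/second coordinates (the paper's ``image/kernel in $D$'' argument); your $|A|\,|B|+|C|$ count for the domination bound and your reading of (ii)--(iii) inside the subgraph without $[0],[1]$ match the paper's implicit setup, just packaged slightly differently.
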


\begin{proof}
Let $\mathscr{V}=\left\{ V_{1},V_{2},\ldots,V_{k}\right\} ,k=\binom{n}{1}_{q}$ be the set of all $1$-dimensional subspaces of $F_{q}^{n}$.
Since $n=2$, every vertex in $\varTheta\left(R\right)\setminus\left\{ \left[0\right],\left[1\right]\right\} $
is of the form $\left(V_{i},V_{j}\right)$ for some $i,j\in\left\{ 1,2,\ldots,k\right\} $.

Let $D'$ be any subset satisfying condition \ref{enu:edge--D'--v}. Using terminology from the proof
of Proposition~\ref{prop:dominating}, if for some $i\in\left\{ 1,2,\ldots,k\right\} $,
$V_{i}$ is not a kernel in $D'$ then for every $j\in\left\{ 1,2,\ldots,k\right\} $
there is no edge from $D'$ to $\left(V_{i},V_{j}\right)$. Hence,
$\left(V_{i},V_{j}\right)\in D'$ by assumption. In this case, $\left|D'\right|\geq\binom{n}{1}_{q}$. On the other hand, if $V_{i}$ is a kernel in $D'$ for all $i\in\left\{ 1,2,\ldots,k\right\} $,
then $\left|D'\right|\geq\binom{n}{1}_{q}$ as well. This establishes the bound for the size of the set in \ref{enu:edge--D'--v}. Similar argument works for \ref{enu:edge--v--D'}.

Let $D$ be any dominating set for $\overline{\varTheta}\left(R\right)\setminus\left\{ \left[0\right],\left[1\right]\right\}$. If $V_{i}$ is a kernel in $D$ for
all $i\in\left\{ 1,2,\ldots,k\right\} $ then $\left|D\right|\geq\binom{n}{1}_{q}$ . So suppose $V_{i}$ is not a kernel in $D$ for some $i$. As in
the proof of Proposition~\ref{prop:dominating}, for every $j\neq i$,
either $\left(V_{i},V_{j}\right)\in D$ or $\left(V_{j},V_{l_{j}}\right)\in D$
for some $l_{j}\in\left\{ 1,2,\ldots,k\right\} $. Since the sets
$\left\{ \left(V_{i},V_{j}\right),\left(V_{j},V_{l_{j}}\right)\right\}$, $j\in\left\{ 1,2,\ldots,k\right\} \setminus\left\{ i\right\} $,
are disjoint, we conclude that $\left|D\right|\geq k-1\geq\binom{n}{1}_{q}-1$.

Now, we construct $D$ and $d$ that satisfy all the conditions. Observe
that $k=\binom{n}{1}_{q}\geq3$. Hence, we can take $D=\left\{ \left(V_{1},V_{2}\right)\right\} \cup \left\{\left(V_{i},V_{i}\right):i\in\left\{ 3,4,\ldots,k\right\} \right\} $
and $d=\left(V_{2},V_{1}\right)$. Only $V_{1}$ is not a kernel in
$D$ and only $V_{2}$ is not an image in $D$. Hence, $\left(V_{1},V_{2}\right)$
is the only vertex not adjacent (in any direction) to any vertex in
$D$. But $\left(V_{1},V_{2}\right)\in D$, so $D$ is a dominating
set.
Every element of $\mathscr{V}$ is a kernel as well as an image in
$D'=D\cup\left\{ d\right\} $, hence $D'$ satisfies \ref{enu:edge--v--D'}
and \ref{enu:edge--D'--v}.
\end{proof}

\section{Graph-theoretic characterization of $\varTheta\left(M_{n}\left(F_{q}\right)\right)$}\label{sec:characterization}

Motivated by \cite[Propositions~4.1 and 4.2]{Dju-Jev-Sto}, the aim of this section is to give a purely graph-theoretic characterization of the graph $\varTheta\left(M_{n}\left(F_{q}\right)\right)$.

For a set $X \subseteq V(G)$, where $G$ is a directed graph, define 
\[
\cl_{\textrm{t}}\left(X\right)=N^{+}\left(N^{-}\left(X\right)\right) \quad\textup{and}\quad \cl_{\textrm{s}}\left(X\right)=N^{-}\left(N^{+}\left(X\right)\right).
\]

\begin{lemma}\label{lem:closure}
For a given directed graph $G$, $\cl_{\textrm{t}}$
(respectively $\cl_{\textrm{s}}$) is a closure operator on
$V\left(G\right)$, i.e. it satisfies the following properties for
all subsets $X,Y\subseteq V\left(G\right)$:
\begin{enumerate}
\item\label{enu:closure1} $X\subseteq\cl_{\textrm{t}}\left(X\right)$,
\item\label{enu:closure2} $X\subseteq Y$ implies $\cl_{\textrm{t}}\left(X\right)\subseteq\cl_{\textrm{t}}\left(Y\right)$,
\item\label{enu:closure3} $\cl_{\textrm{t}}\left(\cl_{\textrm{t}}\left(X\right)\right)=\cl_{\textrm{t}}\left(X\right)$.
\end{enumerate}
We will call $\cl_{\textrm{t}}\left(X\right)$ the \emph{target
closure} of $X$ and $\cl_{\textrm{s}}\left(X\right)$ the
\emph{source closure} of $X$.
\end{lemma}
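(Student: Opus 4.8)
The plan is to recognize that the pair of maps $N^{-}$ and $N^{+}$ forms an antitone Galois connection (a polarity) induced by the edge relation of $G$, and that $\cl_{\textrm{t}}=N^{+}\circ N^{-}$ and $\cl_{\textrm{s}}=N^{-}\circ N^{+}$ are then the associated closure operators. Since reversing every edge of $G$ interchanges $N^{+}$ with $N^{-}$, and hence $\cl_{\textrm{t}}$ with $\cl_{\textrm{s}}$, it suffices to verify \ref{enu:closure1}--\ref{enu:closure3} for $\cl_{\textrm{t}}$; the statements for $\cl_{\textrm{s}}$ then follow by applying the result to the reversed graph.

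First I would record two elementary facts, both read off directly from the definitions of $N^{+}$ and $N^{-}$. (a) \emph{Both $N^{+}$ and $N^{-}$ are inclusion-reversing}: if $X\subseteq Y$, then every common pre-neighbour of all vertices of $Y$ is in particular a common pre-neighbour of all vertices of $X$, so $N^{-}(Y)\subseteq N^{-}(X)$, and symmetrically $N^{+}(Y)\subseteq N^{+}(X)$. (b) \emph{$X\subseteq N^{+}(N^{-}(X))$ and $X\subseteq N^{-}(N^{+}(X))$ for every $X$}: indeed, if $x\in X$ then by the definition of $N^{-}(X)$ there is an edge $v\to x$ for every $v\in N^{-}(X)$, so $x$ is a common post-neighbour of all vertices of $N^{-}(X)$, i.e. $x\in N^{+}(N^{-}(X))$; the second inclusion is symmetric.

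Then \ref{enu:closure1} is exactly the first inclusion in (b). Property \ref{enu:closure2} follows by applying (a) twice: $X\subseteq Y$ gives $N^{-}(Y)\subseteq N^{-}(X)$, and a second application of (a) gives $N^{+}(N^{-}(X))\subseteq N^{+}(N^{-}(Y))$. For \ref{enu:closure3} the key intermediate identity is the cancellation law $N^{+}\circ N^{-}\circ N^{+}=N^{+}$: applying $N^{+}$ to the inclusion $X\subseteq N^{-}(N^{+}(X))$ from (b) and using that $N^{+}$ reverses inclusions gives $N^{+}(N^{-}(N^{+}(X)))\subseteq N^{+}(X)$, while applying (b) with $N^{+}(X)$ in the role of $X$ gives the reverse inclusion $N^{+}(X)\subseteq N^{+}(N^{-}(N^{+}(X)))$. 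Now $\cl_{\textrm{t}}(\cl_{\textrm{t}}(X))=N^{+}\bigl(N^{-}\bigl(N^{+}(N^{-}(X))\bigr)\bigr)$, and applying this identity with $N^{-}(X)$ in place of $X$ collapses the right-hand side to $N^{+}(N^{-}(X))=\cl_{\textrm{t}}(X)$, which is \ref{enu:closure3}.

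I do not expect a genuine obstacle: once the Galois-connection viewpoint is fixed, everything reduces to (a) and (b), which are immediate from the definitions. The only point needing a moment's care is idempotence \ref{enu:closure3}, which is not a direct consequence of (a) and (b) but requires the auxiliary identity $N^{+}\circ N^{-}\circ N^{+}=N^{+}$ (equivalently $N^{-}\circ N^{+}\circ N^{-}=N^{-}$ on the $\cl_{\textrm{s}}$ side); this is the standard cancellation law for polarities and follows formally from (a) and (b) as indicated above.
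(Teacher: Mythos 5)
Your proposal is correct and takes essentially the same route as the paper: both rest on the antitone property of $N^{+},N^{-}$ and the unit inclusions $X\subseteq N^{+}(N^{-}(X))$, $X\subseteq N^{-}(N^{+}(X))$, with the $\cl_{\textrm{s}}$ case handled by the evident symmetry. Your cancellation law $N^{+}\circ N^{-}\circ N^{+}=N^{+}$ is just the mirror form of the identity $N^{-}(X)=N^{-}\left(\cl_{\textrm{t}}(X)\right)$ that the paper proves directly for \ref{enu:closure3}, so the Galois-connection framing is only a repackaging of the same argument.
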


\begin{proof}
\smallskip\noindent\ref{enu:closure1} The inclusion $X\subseteq\cl_{\textrm{t}}\left(X\right)$ follows directly from the definition of closure.

\smallskip\noindent\ref{enu:closure2} $X\subseteq Y$ implies $N^{-}\left(X\right)\supseteq N^{-}\left(Y\right)$
which further implies $N^{+}\left(N^{-}\left(X\right)\right)\subseteq N^{+}\left(N^{-}\left(Y\right)\right)$.

\smallskip\noindent\ref{enu:closure3} By property \ref{enu:closure1}, we have $X\subseteq\cl_{\textrm{t}}\left(X\right)$
and consequently $N^{-}\left(X\right)\supseteq N^{-}\left(\cl_{\textrm{t}}\left(X\right)\right)$.
By definition of $\cl_{\textrm{t}}\left(X\right)$, every
vertex in $N^{-}\left(X\right)$ is a common pre-neighbour of all
vertices in $\cl_{\textrm{t}}\left(X\right)$. Hence, $N^{-}\left(X\right)\subseteq N^{-}\left(\cl_{\textrm{t}}\left(X\right)\right)$.
We conclude that $N^{-}\left(X\right)=N^{-}\left(\cl_{\textrm{t}}\left(X\right)\right)$
which implies $N^{+}\left(N^{-}\left(X\right)\right)=N^{+}\left(N^{-}\left(\cl_{\textrm{t}}\left(X\right)\right)\right)$.
\end{proof}

We will use these properties of closure operators without further
reference.
A set $X$ will be called \emph{target-closed} (respectively \emph{source-closed})
if $\cl_{\textrm{t}}\left(X\right)=X$ (respectively $\cl_{\textrm{s}}\left(X\right)=X$).
Observe that an arbitrary intersection of target-closed sets is again
a target-closed set. Indeed, for two target-closed sets $A$ and $B$,
properties \ref{enu:closure2} and \ref{enu:closure3} in Lemma~\ref{lem:closure} imply $\cl_{\textrm{t}}\left(A\cap B\right)\subseteq\cl_{\textrm{t}}\left(A\right)=A$
and similarly for $B$. Hence, $\cl_{\textrm{t}}\left(A\cap B\right)\subseteq A\cap B$
which implies that $\cl_{\textrm{t}}\left(A\cap B\right)=A\cap B$
by property \ref{enu:closure1}. The proof for arbitrary intersections
is similar.

Given a directed graph $G$, let $d_{0}>d_{1}>d_{2}>\cdots>d_{n}$
be all the possible outdegrees of vertices of $G$. We will denote
by $V_{k}\left(G\right)$ the set of all vertices of $G$ whose outdegree
equals $d_{k}$. Clearly, the sets $V_{k}\left(G\right)$ form a partition
of $V\left(G\right)$. For $x\in V_{k}\left(G\right)$, we will say
that the \emph{type} of vertex $x$, denoted by $T(x)$, is equal to $k$.

In the next theorem we give a purely graph-theoretic characterization of the graph $\varTheta\left(M_{n}\left(F_{q}\right)\right)$. In the proof we will need the notion of projective spaces over finite fields. We denote the $(n-1)$-dimensional projective space over $F_q$ by $PG(n-1,q)$.
The points of $PG(n-1,q)$ are $1$-dimensional vector subspaces of $F_q^n$. A \emph{subspace} of $PG(n-1,q)$ is the set of points whose union is a vector subspace of $F_q^n$.
For further information on $PG(n-1,q)$ we refer the reader to Hirschfeld \cite{Hir} and Casse \cite{Cas}.

\begin{theorem}\label{prop:noncommutative--staircase}
Let $n\neq3$ be a positive
integer and $q>1$ a prime power. Up to a graph isomorphism, there
is a unique directed graph $G$ with no multiple edges that satisfies the
following five properties:
\begin{enumerate}
\item\label{enu:noncomm--sg1} $V\left(G\right)=\bigcup_{k=0}^{n}V_{k}\left(G\right)$, i.e. the number of different outdegrees in $G$ is $n+1$.
\item\label{enu:noncomm--sg2} $\left|V_{k}\left(G\right)\right|=\binom{n}{k}_{q}^{2}$ for all $k\in\left\{ 0,1,\ldots,n\right\} $.
\item\label{enu:noncomm--sg3} $\left|N^{+}\left(x\right)\cap V_{j}\left(G\right)\right|=\left|N^{-}\left(x\right)\cap V_{j}\left(G\right)\right|=\binom{n}{j}_{q}\binom{n-k}{j}_{q}$ for all $x\in V_{k}\left(G\right)$ and all $j,k\in\left\{ 0,1,\ldots,n\right\} $.
\item\label{enu:noncomm--sg4} Every target-closed set is a target closure
of a single vertex. Every source-closed set is a source closure of
a single vertex.
\item\label{enu:noncomm--sg5} $\left|\cl_{\textrm{t}}\left(x\right)\cap\cl_{\textrm{s}}\left(y\right)\cap V_{k}\left(G\right)\right|=1$
for all $x,y\in V_{k}\left(G\right)$ and all $k\in\left\{ 0,1,\ldots,n\right\} $.
\end{enumerate}
The graph $G$ is isomorphic to $\varTheta\left(M_{n}\left(F_{q}\right)\right)$.
\end{theorem}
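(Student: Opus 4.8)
\emph{Strategy.} The statement has two halves: that $\varTheta(M_n(F_q))$ satisfies \ref{enu:noncomm--sg1}--\ref{enu:noncomm--sg5}, and that any directed graph without multiple edges satisfying these five properties is isomorphic to $\varTheta(M_n(F_q))$. The first half is a verification; the second, which gives both uniqueness and the identification, is where the work lies, and the idea is to \emph{reconstruct the subspace lattice of $F_q^n$ from the combinatorics of $G$} and then read the graph off it.

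\emph{$\varTheta(M_n(F_q))$ satisfies the five properties.} I would work throughout with the bijection $[A]\mapsto(\im A,\ker A)$ and the edge rule ``$(V_1,W_1)\to(V_2,W_2)$ iff $W_1\supseteq V_2$''. Properties \ref{enu:noncomm--sg1}--\ref{enu:noncomm--sg3} follow from Proposition~\ref{prop:degree} once one notes that $\deg^{+}((V,W))$ is strictly increasing in $\dim W$, so a vertex lies in $V_k(G)$ precisely when $\dim V=k$. For the last two properties I would compute the closure operators directly from the edge rule, obtaining $\cl_{\textrm{t}}((V,W))=\{(U,Z):U\subseteq V\}$ and $\cl_{\textrm{s}}((V,W))=\{(A,B):B\supseteq W\}$; hence the target-closed sets are exactly the sets $\{(U,Z):U\subseteq V_0\}$ with $V_0$ a subspace of $F_q^n$, each of which is $\cl_{\textrm{t}}$ of any vertex whose image is $V_0$ (dually for source-closed sets), giving \ref{enu:noncomm--sg4}. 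Property \ref{enu:noncomm--sg5} is then immediate: if $x=(V,W)$ and $y=(V',W')$ both have type $k$, then $\cl_{\textrm{t}}(x)\cap\cl_{\textrm{s}}(y)\cap V_k(G)$ consists of the pairs $(U,Z)$ with $U\subseteq V$, $Z\supseteq W'$, $\dim U=k$, forcing $U=V$ and then $Z=W'$.

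\emph{Reconstructing the geometry from $G$.} Now let $G$ be any graph without multiple edges satisfying the five properties. From \ref{enu:noncomm--sg2}--\ref{enu:noncomm--sg3} (with $j=n$, using \eqref{eq:razpisano}) one first gets that $V_0(G)=\{o\}$ and $V_n(G)=\{z\}$ are singletons with $N^{+}(o)=V(G)$ (a loop at $o$) and $N^{+}(z)=\{o\}$; these are the analogues of $[0]$ and $[1]$. Next consider the poset $\mathcal{P}_{\textrm{t}}$ of target-closed subsets of $V(G)$ under inclusion: it is a lattice since intersections of target-closed sets are target-closed (as observed after Lemma~\ref{lem:closure}), it has least element $\{o\}$ and greatest element $V(G)$, and by \ref{enu:noncomm--sg4} each element is $\cl_{\textrm{t}}(v)$ for some vertex $v$. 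Using \ref{enu:noncomm--sg2}, \ref{enu:noncomm--sg3} and the $q$-binomial identities \eqref{eq:razpisano}--\eqref{eq:rekurzija}, I would show that $\mathcal{P}_{\textrm{t}}$ is graded with exactly $\binom{n}{k}_q$ elements of rank $k$, that the type of a vertex equals the rank of its target-closure, and symmetrically for the source-closed poset $\mathcal{P}_{\textrm{s}}$. In particular the atoms of $\mathcal{P}_{\textrm{t}}$ (``points''), its coatoms (``hyperplanes''), and the induced incidence form an abstract point--hyperplane incidence structure with the parameters of $PG(n-1,q)$; one checks it obeys the Veblen--Young axioms of a projective space and that every line has $q+1\geq3$ points.

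\emph{The main obstacle, and the conclusion.} The crux is to upgrade ``correct parameters'' to ``genuinely $PG(n-1,q)$'', i.e.\ $\mathcal{P}_{\textrm{t}}\cong L(F_q^n)$. For $n=1$ the graph has two vertices and there is nothing to prove; for $n=2$ the space $PG(1,q)$ is just a set of $q+1$ points, which $\mathcal{P}_{\textrm{t}}$ determines at once. For $n\geq4$ the recovered projective space has dimension $\geq3$, hence is Desarguesian and, by the coordinatization (Veblen--Young) theorem, is the projective space of a vector space over a division ring (see, e.g., \cite{Hir}); since every line has $q+1$ points this division ring is the field with $q$ elements, by Wedderburn's theorem, so $\mathcal{P}_{\textrm{t}}\cong L(F_q^n)$. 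This is exactly where $n\neq3$ enters: for $n=3$ one only recovers a projective plane of order $q$, which need not be Desarguesian, and a non-Desarguesian plane yields a graph satisfying \ref{enu:noncomm--sg1}--\ref{enu:noncomm--sg5} not isomorphic to $\varTheta(M_3(F_q))$. The genuinely delicate point, I expect, is verifying that the graph axioms force the Veblen--Young (equivalently, modular complemented) structure rather than merely the right cardinalities at each rank; this is where \ref{enu:noncomm--sg5}, rigidly linking $\mathcal{P}_{\textrm{t}}$, $\mathcal{P}_{\textrm{s}}$ and the vertices of each fixed type, does the essential work. Finally, having identified $\mathcal{P}_{\textrm{t}}$ and $\mathcal{P}_{\textrm{s}}$ with the subspace lattice of $F_q^n$ (order-preservingly, resp.\ order-reversingly), I would use \ref{enu:noncomm--sg5} to identify $V_k(G)$ with the pairs of subspaces of dimensions $k$ and $n-k$ via $x\mapsto(\cl_{\textrm{t}}(x),\cl_{\textrm{s}}(x))$ and check that the edge relation of $G$ becomes ``$W_1\supseteq V_2$'', yielding the isomorphism $G\cong\varTheta(M_n(F_q))$.
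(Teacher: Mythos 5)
Your overall strategy is the same as the paper's: verify \ref{enu:noncomm--sg1}--\ref{enu:noncomm--sg5} for $\varTheta(M_n(F_q))$ in the $(\im A,\ker A)$ model (your computation of the closures and of \ref{enu:noncomm--sg4}, \ref{enu:noncomm--sg5} is exactly the paper's Step~17), then, for an abstract $G$, rebuild a projective space from the target-closed sets, invoke the coordinatization theorem in projective dimension $\geq 3$ (indeed the only place $n\neq 3$ is needed), handle $n\leq 2$ by hand, and finally turn vertices into pairs of subspaces. However, there is a genuine gap: the entire substance of the converse half is the derivation, from \ref{enu:noncomm--sg1}--\ref{enu:noncomm--sg5} alone, that the family of target-closures is the subspace poset of a projective geometry, and this is precisely what you leave at the level of ``I would show'' and ``one checks the Veblen--Young axioms''. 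Having the correct cardinalities $\binom{n}{k}_q$ at each rank is not enough (you say so yourself), and the verification is not routine: in the paper it occupies Steps~1--14, including (a) the dichotomy that closures of two vertices of the same type are equal or meet trivially inside that type, which is what makes the ranks well defined; (b) the count $|\cl_{\textrm{t}}(x)\cap V_k(G)|=\binom{n}{k}_q$; (c) the existence and uniqueness of the opposite vertex $x^{\textrm{op}}$ and the identity $\cl_{\textrm{t}}(x)=N^{+}(x^{\textrm{op}})$, which is the dictionary between closures and neighbourhoods; (d) the counting arguments showing that $X\in\mathscr{P}_k$ and a point $Y\nsubseteq X$ lie in a unique member of $\mathscr{P}_{k+1}$, and that every line contained in that member meets $X$; and (e) the inductive proof of the dimension formula (Step~14), which is what the axiomatic characterization of $PG(n-1,q)$ actually demands. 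None of these is argued in your proposal, and without them the claim that property \ref{enu:noncomm--sg5} ``does the essential work'' is only a guess about where a proof would come from.

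A second, smaller omission concerns your final dictionary $x\mapsto(\cl_{\textrm{t}}(x),\cl_{\textrm{s}}(x))$ with two independently chosen identifications of the target-closed and source-closed lattices with the subspace lattice of $F_q^n$: chosen independently, these need not transport the edge relation of $G$ to the relation $W_1\supseteq V_2$. To check edges one needs the neighbourhood--closure identity $N^{+}(x)=\cl_{\textrm{t}}(x^{\textrm{op}})$, so that $x\to y$ if and only if $\cl_{\textrm{t}}(y)\subseteq\cl_{\textrm{t}}(x^{\textrm{op}})$, and then a \emph{single} lattice isomorphism applied to both coordinates; this is exactly how the paper defines $\psi(x)=\bigl(\widehat{\varphi}(S(\cl_{\textrm{t}}(x))),\widehat{\varphi}(S(\cl_{\textrm{t}}(x^{\textrm{op}})))\bigr)$ in Steps~18--20. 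Your framework can be repaired along these lines, but as written both the geometric core (a)--(e) and this compatibility step are missing, so the proposal is an accurate outline of the paper's route rather than a proof.
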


We remark that property \ref{enu:noncomm--sg3} implies $\deg^{+}\left(x\right)=\deg^{-}\left(x\right)$
for every $x\in V\left(G\right)$, so the conditions ($i$)--($v$) are symmetric with respect to indegree and outdegree.

\begin{proof}
We divide the proof into several steps.

\begin{step}{1}
For all $x,y\in V_{k}\left(G\right)$, $\cl_{\textrm{t}}\left(x\right)\cap\cl_{\textrm{t}}\left(y\right)\cap V_{k}\left(G\right)\neq\emptyset$
implies $\cl_{\textrm{t}}\left(x\right)=\cl_{\textrm{t}}\left(y\right)$,
and $\cl_{\textrm{s}}\left(x\right)\cap\cl_{\textrm{s}}\left(y\right)\cap V_{k}\left(G\right)\neq\emptyset$
implies $\cl_{\textrm{s}}\left(x\right)=\cl_{\textrm{s}}\left(y\right)$.
\end{step}

Suppose $z\in\cl_{\textrm{t}}\left(x\right)\cap\cl_{\textrm{t}}\left(y\right)\cap V_{k}\left(G\right)$.
Then, by definition of $\cl_{\textrm{t}}$, $N^{-}\left(z\right)\supseteq N^{-}\left(x\right)\cup N^{-}\left(y\right)$.
This gives us
\[
N^{-}\left(z\right)\cap V_{j}\left(G\right)\supseteq\left(N^{-}\left(x\right)\cap V_{j}\left(G\right)\right)\cup\left(N^{-}\left(y\right)\cap V_{j}\left(G\right)\right)
\]
for all $j\in\left\{ 0,1,\ldots,n\right\}$.
Property \ref{enu:noncomm--sg3} implies that $\left|N^{-}\left(z\right)\cap V_{j}\left(G\right)\right|=\left|N^{-}\left(x\right)\cap V_{j}\left(G\right)\right|=\left|N^{-}\left(y\right)\cap V_{j}\left(G\right)\right|.$
Hence, $N^{-}\left(z\right)\cap V_{j}\left(G\right)=N^{-}\left(x\right)\cap V_{j}\left(G\right)=N^{-}\left(y\right)\cap V_{j}\left(G\right).$
Since this holds for all $j$, we have $N^{-}\left(x\right)=N^{-}\left(y\right)$
and consequently $\cl_{\textrm{t}}\left(x\right)=\cl_{\textrm{t}}\left(y\right)$.

\begin{step}{2}
$T\left(y\right)\leq T\left(x\right)$ for all $y\in\cl_{\textrm{t}}\left(x\right)$.
\end{step}

By property \ref{enu:noncomm--sg3}, $\left|N^{-}\left(x\right)\cap V_{n-T\left(x\right)}\left(G\right)\right|=\binom{n}{n-T\left(x\right)}_{q}\binom{n-T\left(x\right)}{n-T\left(x\right)}_{q}\neq0$, hence there exists $z\in N^{-}\left(x\right)\cap V_{n-T\left(x\right)}\left(G\right)$.
Since $y\in N^{+}\left(N^{-}\left(x\right)\right)$, we have $y\in N^{+}\left(z\right).$
Therefore, $\left|N^{+}\left(z\right)\cap V_{T\left(y\right)}\left(G\right)\right|\neq0$.
By property \ref{enu:noncomm--sg3} we have $\left|N^{+}\left(z\right)\cap V_{T\left(y\right)}\left(G\right)\right|=\binom{n}{T\left(y\right)}_{q}\binom{n-T\left(z\right)}{T\left(y\right)}_{q}$. The second factor is equal to $\binom{T\left(x\right)}{T\left(y\right)}_{q}$ by definition of $z$ and has to be nonzero. So, by definition of
$q$-binomial coefficients, we obtain $T\left(y\right)\leq T\left(x\right)$.

\begin{step}{3}
$y\in N^{+}\left(x\right)$ (equivalently, $x\in N^{-}\left(y\right)$)
implies $\cl_{\textrm{t}}\left(y\right)\subseteq N^{+}\left(x\right)$
and $\cl_{\textrm{s}}\left(x\right)\subseteq N^{-}\left(y\right)$.
\end{step}

By definition, $y\in N^{+}\left(x\right)$ implies $x\in N^{-}\left(y\right)$, which
further implies $N^{+}\left(x\right)\supseteq N^{+}\left(N^{-}\left(y\right)\right)=\cl_{\textrm{t}}\left(y\right)$.
The proof for source closure is similar.

\begin{step}{4}
$\left|\cl_{\textrm{t}}\left(x\right)\cap V_{k}\left(G\right)\right|=\left|\cl_{\textrm{t}}\left(y\right)\cap V_{k}\left(G\right)\right|$ for all $x,y\in V_{k}\left(G\right)$.
\end{step}

Define a map $f:\cl_{\textrm{t}}\left(x\right)\cap V_{k}\left(G\right)\to\cl_{\textrm{t}}\left(y\right)\cap V_{k}\left(G\right)$
as follows. For $z\in\cl_{\textrm{t}}\left(x\right)\cap V_{k}\left(G\right)$,
we use property \ref{enu:noncomm--sg5} to define $f\left(z\right)$
uniquely by the condition 
\[
\left\{ f\left(z\right)\right\} =\cl_{\textrm{t}}\left(y\right)\cap\cl_{\textrm{s}}\left(z\right)\cap V_{k}\left(G\right).
\]
Similarly, define $g:\cl_{\textrm{t}}\left(y\right)\cap V_{k}\left(G\right)\to\cl_{\textrm{t}}\left(x\right)\cap V_{k}\left(G\right)$
by 
\[
\left\{ g\left(w\right)\right\} =\cl_{\textrm{t}}\left(x\right)\cap\cl_{\textrm{s}}\left(w\right)\cap V_{k}\left(G\right).
\]
It suffices to prove that maps $f$ and $g$ are inverses of each other.
Choose any $z\in\cl_{\textrm{t}}\left(x\right)\cap V_{k}\left(G\right)$.
Since $f\left(z\right)\in\cl_{\textrm{s}}\left(f\left(z\right)\right)\cap\cl_{\textrm{s}}\left(z\right)\cap V_{k}\left(G\right)$,
Step~1 implies $\cl_{\textrm{s}}\left(f\left(z\right)\right)=\cl_{\textrm{s}}\left(z\right)$.
Hence $z\in\cl_{\textrm{t}}\left(x\right)\cap\cl_{\textrm{s}}\left(f\left(z\right)\right)\cap V_{k}\left(G\right)=\left\{ g\left(f\left(z\right)\right)\right\} $.
We conclude that $g\left(f\left(z\right)\right)=z$. Similarly, $f\left(g\left(w\right)\right)=w$.

\begin{step}{5}
$\left|\cl_{\textrm{t}}\left(x\right)\cap V_{k}\left(G\right)\right|=\left|\cl_{\textrm{s}}\left(x\right)\cap V_{k}\left(G\right)\right|=\binom{n}{k}_{q}$ for all $x\in V_{k}\left(G\right)$.
\end{step}

Let $z\in V_{n-k}\left(G\right)$. If $y\in N^{+}\left(z\right)\cap V_{k}\left(G\right)$,
then $\cl_{\textrm{t}}\left(y\right)\cap V_{k}\left(G\right)\subseteq N^{+}\left(z\right)\cap V_{k}\left(G\right)$
by Step~3, so $N^{+}\left(z\right)\cap V_{k}\left(G\right)$ is a
union of sets of the form $\cl_{\textrm{t}}\left(y\right)\cap V_{k}\left(G\right)$
where $y\in N^{+}\left(z\right)\cap V_{k}\left(G\right)$. By Step~1, two sets of this form are either disjoint or equal and, by Step~4, they all have the same size. Hence, $\left|\cl_{\textrm{t}}\left(x\right)\cap V_{k}\left(G\right)\right|$
divides $\left|N^{+}\left(z\right)\cap V_{k}\left(G\right)\right|$, which is equal to $\binom{n}{k}_{q}\binom{k}{k}_{q}=\binom{n}{k}_{q}$ by property \ref{enu:noncomm--sg3}. Similarly, $\left|\cl_{\textrm{s}}\left(x\right)\cap V_{k}\left(G\right)\right|$
divides $\binom{n}{k}_{q}$.

By Step~1, the set $V_{k}\left(G\right)$ is a disjoint union of some
number of sets of the form $\cl_{\textrm{t}}\left(w\right)\cap V_{k}\left(G\right)$,
where $w\in V_{k}\left(G\right)$. Since each set of this form intersects
$\cl_{\textrm{s}}\left(x\right)\cap V_{k}\left(G\right)$
in precisely one element by property \ref{enu:noncomm--sg5}, $V_{k}\left(G\right)$
is a disjoint union of $\left|\cl_{\textrm{s}}\left(x\right)\cap V_{k}\left(G\right)\right|$
sets of the form $\cl_{\textrm{t}}\left(w\right)\cap V_{k}\left(G\right)$.
Using Step~4 and property \ref{enu:noncomm--sg2}, we obtain $\left|\cl_{\textrm{t}}\left(x\right)\cap V_{k}\left(G\right)\right|\cdot\left|\cl_{\textrm{s}}\left(x\right)\cap V_{k}\left(G\right)\right|=\left|V_{k}\left(G\right)\right|=\binom{n}{k}_{q}^{2}$. Both factors on the left-hand side divide $\binom{n}{k}_{q}$, hence $\left|\cl_{\textrm{t}}\left(x\right)\cap V_{k}\left(G\right)\right|=\left|\cl_{\textrm{s}}\left(x\right)\cap V_{k}\left(G\right)\right|=\binom{n}{k}_{q}$.

\begin{step}{6}
$\left|N^{+}\left(x\right)\cap N^{-}\left(y\right)\cap V_{n-k}\left(G\right)\right|=1$ for all $x,y\in V_{k}\left(G\right)$.
\end{step}

By property \ref{enu:noncomm--sg3}, there are $x',y'\in V_{n-k}\left(G\right)$
such that $x'\in N^{+}\left(x\right)$ and $y'\in N^{-}\left(y\right)$.
Using property \ref{enu:noncomm--sg5}, let
\[
\left\{ z\right\} =\cl_{\textrm{t}}\left(x'\right)\cap\cl_{\textrm{s}}\left(y'\right)\cap V_{n-k}\left(G\right).
\]

By definition of closures, it follows that $z\in N^{+}\left(x\right)\cap N^{-}\left(y\right)$.
To prove uniqueness of $z$, suppose $w\in N^{+}\left(x\right)\cap N^{-}\left(y\right)\cap V_{n-k}\left(G\right)$.
By Step~3, we have $\cl_{\textrm{t}}\left(w\right)\cap V_{n-k}\left(G\right)\subseteq N^{+}\left(x\right)\cap V_{n-k}\left(G\right)$.
But, using Step~5 and property \ref{enu:noncomm--sg3}, we get $\left|\cl_{\textrm{t}}\left(w\right)\cap V_{n-k}\left(G\right)\right|=\binom{n}{n-k}_{q}=\left|N^{+}\left(x\right)\cap V_{n-k}\left(G\right)\right|$. Thus 
\[
\cl_{\textrm{t}}\left(w\right)\cap V_{n-k}\left(G\right)=N^{+}\left(x\right)\cap V_{n-k}\left(G\right).
\]

Similarly $\cl_{\textrm{s}}\left(w\right)\cap V_{n-k}\left(G\right)=N^{-}\left(y\right)\cap V_{n-k}\left(G\right)$.
By replacing $w$ with $z$ in the last two equations, we deduce $\cl_{\textrm{t}}\left(z\right)\cap V_{n-k}\left(G\right)=\cl_{\textrm{t}}\left(w\right)\cap V_{n-k}\left(G\right)$
and $\cl_{\textrm{s}}\left(z\right)\cap V_{n-k}\left(G\right)=\cl_{\textrm{s}}\left(w\right)\cap V_{n-k}\left(G\right)$.
By property \ref{enu:noncomm--sg5}, we conclude
\[
\left\{ z\right\} =\cl_{\textrm{t}}\left(z\right)\cap\cl_{\textrm{s}}\left(z\right)\cap V_{n-k}\left(G\right)=\cl_{\textrm{t}}\left(w\right)\cap\cl_{\textrm{s}}\left(w\right)\cap V_{n-k}\left(G\right)=\left\{ w\right\},
\]
which completes the proof of Step~6.

\smallskip For any $v\in V_{k}\left(G\right)$, Step~6 allows us to define $v^{\textrm{op}}$, the \emph{opposite vertex} of $v$,
by the condition 
\[
\left\{ v^{\textrm{op}}\right\} =N^{+}\left(v\right)\cap N^{-}\left(v\right)\cap V_{n-k}\left(G\right).
\]
Observe that $\left(v^{\textrm{op}}\right)^{\textrm{op}}=v$ since
$v\in N^{+}\left(v^{\textrm{op}}\right)\cap N^{-}\left(v^{\textrm{op}}\right)\cap V_{k}\left(G\right)$.
In addition, the proof of Step~6 shows that
\[
\cl_{\textrm{t}}\left(v^{\textrm{op}}\right)\cap V_{n-k}\left(G\right)=N^{+}\left(v\right)\cap V_{n-k}\left(G\right).
\]
Replacing $v$ by $v^{\textrm{op}}$, we obtain
\begin{equation}\label{eq:closure--neighbourhood}
\cl_{\textrm{t}}\left(v\right)\cap V_{k}\left(G\right)=N^{+}\left(v^{\textrm{op}}\right)\cap V_{k}\left(G\right).
\end{equation}

\begin{step}{7}
$\cl_{\textrm{t}}\left(x\right)=N^{+}\left(x^{\textrm{op}}\right)$
and $\cl_{\textrm{s}}\left(x\right)=N^{-}\left(x^{\textrm{op}}\right)$ for all $x \in V(G)$.
\end{step}

Let $x\in V_{k}\left(G\right)$. By Step~3 and the definition of $x^{\textrm{op}}$,
$\cl_{\textrm{t}}\left(x\right)\subseteq N^{+}\left(x^{\textrm{op}}\right)$.
To prove the opposite inclusion, let $z\in N^{+}\left(x^{\textrm{op}}\right)$.
By property \ref{enu:noncomm--sg4}, we have $\cl_{\textrm{t}}\left(\left\{ x,z\right\} \right)=\cl_{\textrm{t}}\left(u\right)$
for some $u\in V_{j}\left(G\right)$, $j\in\left\{ 0,1,\ldots,n\right\} $.
Since $x^{\textrm{op}}\in N^{-}\left(\left\{ x,z\right\} \right)$
and $u\in\cl_{\textrm{t}}\left(u\right)=N^{+}\left(N^{-}\left(\left\{ x,z\right\} \right)\right)$,
we have $u\in N^{+}\left(x^{\textrm{op}}\right)$. By property \ref{enu:noncomm--sg3}
(using similar argument as in Step~2), we obtain $j\leq k$. Since
$x\in\cl_{\textrm{t}}\left(u\right)$, Step~2 implies $k\leq j$.
Hence, $j=k.$ By \eqref{eq:closure--neighbourhood}, we have
\[
\cl_{\textrm{t}}\left(x\right)\cap V_{k}\left(G\right)=N^{+}\left(x^{\textrm{op}}\right)\cap V_{k}\left(G\right)\ni u.
\]
This implies $u\subseteq\cl_{\textrm{t}}\left(x\right)$,
hence $\cl_{\textrm{t}}\left(x\right)=\cl_{\textrm{t}}\left(u\right)\ni z$.
We conclude that $N^{+}\left(x^{\textrm{op}}\right)\subseteq\cl_{\textrm{t}}\left(x\right)$.

\smallskip In the next few steps, we will prove several properties of the target-closed
sets which will enable us to introduce a projective space isomorphic
to $PG\left(n-1,q\right)$. We will make use of the characterization
given in \cite[p.~39]{Hir}.

Let $\mathscr{P}_{k}$ be the set of all $\cl_{\textrm{t}}\left(x\right)$,
where $x\in V_{k}\left(G\right)$. By Step~2, the sets $\mathscr{P}_{k}$
for different indices $k$ are disjoint. Elements of $\mathscr{P}_{1}$
will be called \emph{points}, and elements of $\mathscr{P}{}_{2}$ will be
called \emph{lines}, and we will be using all the standard geometric terminology
such as point lies on a line, two lines intersect, etc. 

\begin{step}{8}
Every element of $\mathscr{P}_{m}$ contains $\binom{m}{k}_{q}$ elements of $\mathscr{P}_{k}$ and is contained in $\binom{n-m}{n-k}_{q}$ elements of $\mathscr{P}_{k}$. In particular, $\left|\mathscr{P}_{k}\right|=\binom{n}{k}_{q}$.
\end{step}

Let $x\in V_{m}\left(G\right)$ and $y\in V_{k}\left(G\right)$. Then,
by Step~7, $\cl_t\left(y\right)\subseteq\cl_{\textrm{t}}\left(x\right)$
is equivalent to $\cl_t\left(y\right)\subseteq N^{+}\left(x^{\textrm{op}}\right)$
which is, by Step~3, further equivalent to $y\in N^{+}\left(x^{\textrm{op}}\right)$
and this is, again by Step~3, equivalent to $\cl_{\textrm{t}}\left(y\right)\cap V_{k}\left(G\right)\subseteq N^{+}\left(x^{\textrm{op}}\right)\cap V_{k}\left(G\right)$.
In addition, it follows from Step~1 and Step~3 that $N^{+}\left(x^{\textrm{op}}\right)\cap V_{k}\left(G\right)$
is a disjoint union of sets of the form $\cl_{\textrm{t}}\left(z\right)\cap V_{k}\left(G\right)$,
where $z\in N^{+}\left(x^{\textrm{op}}\right)\cap V_{k}\left(G\right)$.
Hence, using Step~5 and property \ref{enu:noncomm--sg3}, we see that
$\cl_{\textrm{t}}\left(x\right)$ contains
\[
\frac{\left|N^{+}\left(x^{\textrm{op}}\right)\cap V_{k}\left(G\right)\right|}{\left|\cl_{\textrm{t}}\left(y\right)\cap V_{k}\left(G\right)\right|}=\frac{\binom{n}{k}_{q}\binom{m}{k}_{q}}{\binom{n}{k}_{q}}=\binom{m}{k}_{q}
\]
elements of $\mathscr{P}_{k}$. To prove the second part, observe
that, by Step~7, $\cl_{\textrm{t}}\left(x\right)\subseteq\cl_{\textrm{t}}\left(y\right)$
is equivalent to $x\in N^{+}\left(y^{\textrm{op}}\right)$
which is equivalent to $y^{\textrm{op}}\in N^{-}\left(x\right)$ and
this is, again by Step~7, equivalent to $\cl_{\textrm{s}}\left(y^{\textrm{op}}\right)\subseteq\cl_{\textrm{s}}\left(x^{\textrm{op}}\right)$.
By symmetry with the above and the fact that $T\left(x^{\textrm{op}}\right)=n-m$
and $T\left(y^{\textrm{op}}\right)=n-k$, we conclude that $\cl_{\textrm{t}}\left(x\right)$
is contained in $\binom{n-m}{n-k}_{q}$ elements of $\mathscr{P}_{k}$.

To prove $\left|\mathscr{P}_{k}\right|=\binom{n}{k}_{q}$, let $x\in V_{n}\left(G\right)$. By Step~7, we have $\cl_{\textrm{t}}\left(x\right)=N^{+}\left(x^{\textrm{op}}\right)$,
where $x^{\textrm{op}}\in V_{0}\left(G\right)$. For every $j\in\left\{ 0,1,\ldots,n\right\} $,
properties \ref{enu:noncomm--sg2} and \ref{enu:noncomm--sg3} give
us
\[
\left|N^{+}\left(x^{\textrm{op}}\right)\cap V_{j}\left(G\right)\right|=\binom{n}{j}_{q}^{2}=\left|V_{j}\left(G\right)\right|,
\]
which implies $V_{j}\left(G\right)\subseteq N^{+}\left(x^{\textrm{op}}\right)$.
By property \ref{enu:noncomm--sg1}, $\cl_{\textrm{t}}\left(x\right)=V\left(G\right)$.
Now, $\left|\mathscr{P}_{k}\right|=\binom{n}{k}_{q}$ follows from the first part of Step~8.

\smallskip Observe that in our situation, an element of $\mathscr{P}_{k}$ is
not a union of points it contains, but rather their closure, as
we show next.

\begin{step}{9}
$\cl_{\textrm{t}}\left(x\right)=\cl_{\textrm{t}}\left(\bigcup\left\{ P\in\mathscr{P}_{1}:P\subseteq\cl_{\textrm{t}}\left(x\right)\right\} \right)$ for all $x\in V\left(G\right)$.
\end{step}

By property \ref{enu:noncomm--sg4}, $\cl_{\textrm{t}}\left(\bigcup\left\{ P\in\mathscr{P}_{1}:P\subseteq\cl_{\textrm{t}}\left(x\right)\right\} \right)=\cl_{\textrm{t}}\left(u\right)$
for some $u\in V\left(G\right)$. By Step~8, $\cl_{\textrm{t}}\left(x\right)$
contains $\binom{T\left(x\right)}{1}_{q}$ points and $\cl_{\textrm{t}}\left(u\right)$ contains $\binom{T\left(u\right)}{1}_{q}$ points. By construction, $\cl_{\textrm{t}}\left(u\right)$
contains all the points of $\cl_{\textrm{t}}\left(x\right)$,
so $\binom{T\left(u\right)}{1}_{q}\geq\binom{T\left(x\right)}{1}_{q}$. On the other hand, $\binom{T\left(u\right)}{1}_{q}\leq\binom{T\left(x\right)}{1}_{q}$ since $\cl_{\textrm{t}}\left(u\right)\subseteq\cl_{\textrm{t}}\left(x\right)$.
Hence, $\binom{T\left(u\right)}{1}_{q}=\binom{T\left(x\right)}{1}_{q}$ and consequently $T\left(u\right)=T\left(x\right)$. Thus, Step~1
implies $\cl_{\textrm{t}}\left(u\right)=\cl_{\textrm{t}}\left(x\right)$.

\begin{step}{10}
For $X\in\mathscr{P}_{k}$ and $Y\in\mathscr{P}_{1}$,
where $Y\nsubseteq X$, there is at most one $Z\in\mathscr{P}_{k+1}$
such that $X,Y\subseteq Z$.
\end{step}

Suppose $Z,Z'\in\mathscr{P}_{k+1}$ both contain $X$ and $Y$. Then
$\cl_{\textrm{t}}\left(X\cup Y\right)\subseteq Z\cap Z'$.
Since $Z\cap Z'$ is a target-closed set, we have $Z\cap Z'=\cl_{\textrm{t}}\left(u\right)$
for some $u\in V_{j}\left(G\right)$ by property \ref{enu:noncomm--sg4}.
By Step~2, we have $k\leq j$ since $X\subseteq\cl_{\textrm{t}}\left(u\right)$.
If $k=j$, then $X\subseteq\cl_{\textrm{t}}\left(u\right)$
would imply $X=\cl_{\textrm{t}}\left(u\right)\supseteq Y$
by Step~1, a contradiction. Hence $k<j$. By Step~2, we also have
$j\leq k+1$ since $u\in Z$. Thus, $j=k+1$. So, $u\in Z\cap Z'\cap V_{k+1}\left(G\right)$
which implies $Z=Z'$ by Step~1.

\begin{step}{11}
For $X\in\mathscr{P}_{k}$ and $Y\in\mathscr{P}_{1}$,
where $Y\nsubseteq X$, there is precisely one $Z\in\mathscr{P}_{k+1}$
such that $X,Y\subseteq Z$.
\end{step}

First suppose $k=1$. Let us count the number of pairs $\left\{ X,Y\right\} $,
where $X\in\mathscr{P}_{1}$, $Y\in\mathscr{P}_{1}$ and $Y\nsubseteq X$,
such that $X,Y\subseteq Z$ for some $Z\in\mathscr{P}_{2}$. For convenience,
we will simply say that a pair $\left\{ X,Y\right\} $ is contained
in $Z$ if $X,Y\subseteq Z$. Observe that, by Step~1, the condition
$Y\nsubseteq X$ is equivalent to $Y\neq X$. By Step~8, every element
of $\mathscr{P}_{2}$ contains $\binom{2}{1}_{q}$ elements of $\mathscr{P}_{1}$. Hence, every $Z\in\mathscr{P}_{2}$
contains $\dbinom{\binom{2}{1}_{q}}{2}$ such pairs $\left\{ X,Y\right\}$. But, by Step~10, two distinct elements
of $\mathscr{P}_{2}$ cannot contain the same pair $\left\{ X,Y\right\} $.
Hence, by the equality $\left|\mathscr{P}_{2}\right|=\binom{n}{2}_{q}$ in Step~8, there are exactly $\dbinom{\binom{2}{1}_{q}}{2}\cdot\binom{n}{2}_{q}$ such pairs $\left\{ X,Y\right\} $. A short calculation shows that
$\dbinom{\binom{2}{1}_{q}}{2}\cdot\binom{n}{2}_{q}=\dbinom{\binom{n}{1}_{q}}{2}$, but, by Step~8, this is precisely the number of pairs of distinct
elements of $\mathscr{P}_{1}$. This shows that every pair of distinct
elements of $\mathscr{P}_{1}$ is contained in some $Z\in\mathscr{P}_{2}$
and, by Step~10, this $Z$ is unique.

Now suppose $k>1$. First, let us count the number of pairs $\left\{ X,Y\right\} $,
where $X\in\mathscr{P}_{k}$, $Y\in\mathscr{P}_{1}$ and $Y\nsubseteq X$,
such that $X,Y\subseteq Z$ for some $Z\in\mathscr{P}_{k+1}$. We
fix some $Z\in\mathscr{P}_{k+1}$ and count such pairs contained in
$Z$, as follows. By Step~8, we first choose $X$ in $\binom{k+1}{k}_{q}=\binom{k+1}{1}_{q}$ ways and then $Y$ in $\binom{k+1}{1}_{q}-\binom{k}{1}_{q}$ ways (the number of points in $Z$ minus the number of points in $X$).
Observe that no pair is counted twice since $k>1$. So there are $\binom{k+1}{1}_{q}\cdot\left(\binom{k+1}{1}_{q}-\binom{k}{1}_{q}\right)$ such pairs in a given $Z\in\mathscr{P}_{k+1}$. By Step~10, each
pair $\left\{ X,Y\right\} $ is contained in only one $Z\in\mathscr{P}_{k+1}$.
Hence, by the equality $\left|\mathscr{P}_{k+1}\right|=\binom{n}{k+1}_{q}$ in Step~8, there are exactly $\binom{k+1}{1}_{q}\cdot\left(\binom{k+1}{1}_{q}-\binom{k}{1}_{q}\right)\cdot\binom{n}{k+1}_{q}$ pairs $\left\{ X,Y\right\} $, where $X\in\mathscr{P}_{k}$, $Y\in\mathscr{P}_{1}$
and $Y\nsubseteq X$, that are contained in some $Z$.

Now, let us count the number of pairs $\left\{ X,Y\right\} $, where
$X\in\mathscr{P}_{k}$, $Y\in\mathscr{P}_{1}$ and $Y\nsubseteq X$
(here, we do not insist that $X$ and $Y$ should be contained in
some $Z\in\mathscr{P}_{k+1}$). Similar as above, we first choose
$X\in\mathscr{P}_{k}$ and then $Y\in\mathscr{P}_{1}$, $Y\nsubseteq X$,
to count that there are exactly $\binom{n}{k}_{q}\cdot\left(\binom{n}{1}_{q}-\binom{k}{1}_{q}\right)$ such pairs $\left\{ X,Y\right\} $. A short calculation shows that
$\binom{k+1}{1}_{q}\cdot\left(\binom{k+1}{1}_{q}-\binom{k}{1}_{q}\right)\cdot\binom{n}{k+1}_{q}=\binom{n}{k}_{q}\cdot\left(\binom{n}{1}_{q}-\binom{k}{1}_{q}\right)$, which implies that each pair $\left\{ X,Y\right\} $, where $X\in\mathscr{P}_{k}$,
$Y\in\mathscr{P}_{1}$ and $Y\nsubseteq X$, is in fact contained
in some $Z\in\mathscr{P}_{k+1}$.

\begin{step}{12}
If $X\in\mathscr{P}_{k}$ and $Y\in\mathscr{P}_{1}$,
where $Y\nsubseteq X$, then $\cl_{\textrm{t}}\left(X\cup Y\right)\in\mathscr{P}_{k+1}$.
\end{step}

By property \ref{enu:noncomm--sg4}, $\cl_{\textrm{t}}\left(X\cup Y\right)=\cl_{\textrm{t}}\left(u\right)$
for some $u\in V_{j}\left(G\right)$ where $j\in\left\{ 0,1,\ldots,n\right\} $.
By Step~11, $\cl_{\textrm{t}}\left(u\right)\subseteq\cl_{\textrm{t}}\left(v\right)$
for some $v\in V_{k+1}\left(G\right)$. By Step~2, we have $k\leq j\leq k+1$.
If $j=k$, then the inclusion $X\subseteq\cl_{\textrm{t}}\left(u\right)$
implies $X=\cl_{\textrm{t}}\left(u\right)\supseteq Y$ by
Step~1, a contradiction. Hence, $j=k+1$.

\begin{step}{13}
For all $X\in\mathscr{P}_{k}$, $Y\in\mathscr{P}_{2}$
and $Z\in\mathscr{P}_{k+1}$, where $X,Y\subseteq Z$, there exists
$W\in\mathscr{P}_{1}$ such that $W\subseteq X\cap Y$.
\end{step}

We count the number of lines that are contained in $Z$ and contain a common
point with $X$. Step~8 implies that every line contains $\binom{2}{1}_{q}\geq3$
distinct points. Step~11 says, in particular, that a line is determined
uniquely by two distinct points it contains. We will use this fact
implicitly in our counting. We first count those lines contained in
$Z$, that have only one common point with $X$, as follows. We choose
a point contained in $X$ in $\binom{k}{1}_{q}$ ways and then a point
contained in $Z\setminus X$ in $\binom{k+1}{1}_{q}-\binom{k}{1}_{q}$
ways. These two points determine a line that has only one common point
with $X$ (if a line contained two distinct points in $X$, then the
whole line would be contained in $X$), but this way every line is
counted $\binom{2}{1}_{q}-1$ times (this is the number of points
on the line, that are not contained in $X$). So there are
\[
\frac{\binom{k}{1}_{q}\cdot\left(\binom{k+1}{1}_{q}-\binom{k}{1}_{q}\right)}{\binom{2}{1}_{q}-1}=q^{k-1}\cdot\binom{k}{1}_{q}
\]
lines that are contained in $Z$ and have only one common point with
$X$. Any line that has more than one common point with $X$, is contained
in $X$, and there are $\binom{k}{2}_{q}$ such lines by Step~8. So,
using the recursion formula \eqref{eq:rekurzija} for $q$-binomial coefficients, we see
that altogether there are $\binom{k}{2}_{q}+q^{k-1}\cdot\binom{k}{1}_{q}=\binom{k+1}{2}_{q}$
lines contained in $Z$ that have a common point with $X$. However,
by Step~8, there are only $\binom{k+1}{2}_{q}$ lines contained in
$Z$ altogether, so every line contained in $Z$ has a common point with $X$.

\begin{step}{14}
If $\cl_{\textrm{t}}\left(\left\{ x,y\right\} \right)=\cl_{\textrm{t}}\left(z\right)$
and $\cl_{\textrm{t}}\left(x\right)\cap\cl_{\textrm{t}}\left(y\right)=\cl_{\textrm{t}}\left(w\right)$
then $T\left(x\right)+T\left(y\right)=T\left(z\right)+T\left(w\right)$.
\end{step}

It follows from Step~2 that $T\left(w\right)\leq T\left(y\right)$.
Suppose $T\left(w\right)=T\left(y\right)$. Then $\cl_{\textrm{t}}\left(w\right)=\cl_{\textrm{t}}\left(y\right)$
by Step~1. Consequently, $\cl_{\textrm{t}}\left(y\right)\subseteq\cl_{\textrm{t}}\left(x\right)$,
hence $\cl_{\textrm{t}}\left(x\right)=\cl_{\textrm{t}}\left(\left\{ x,y\right\} \right)=\cl_{\textrm{t}}\left(z\right)$
and $T\left(x\right)=T\left(z\right)$. In this
case, the claim holds.

We continue by induction on $T\left(y\right)-T\left(w\right)$. So
suppose $T\left(y\right)-T\left(w\right)\geq1$. Then, by Step~8, $\cl_{\textrm{t}}\left(y\right)$
contains more points than $\cl_{\textrm{t}}\left(w\right)$,
so we can choose a vertex $p\in V_{1}\left(G\right)$, such that $p\in\cl_{\textrm{t}}\left(y\right)\setminus\cl_{\textrm{t}}\left(w\right)$.
By Step~12, we have
\begin{align*}
\cl_{\textrm{t}}\left(\left\{ x,p\right\} \right) &=\cl_{\textrm{t}}\left(\cl_{\textrm{t}}\left(x\right)\cup\cl_{\textrm{t}}\left(p\right)\right)=\cl_{\textrm{t}}\left(x'\right) \quad\textup{and}\\
\cl_{\textrm{t}}\left(\left\{ w,p\right\} \right) &=\cl_{\textrm{t}}\left(\cl_{\textrm{t}}\left(w\right)\cup\cl_{\textrm{t}}\left(p\right)\right)=\cl_{\textrm{t}}\left(w'\right),
\end{align*}
where $T\left(x'\right)=T\left(x\right)+1$ and $T\left(w'\right)=T\left(w\right)+1$.
We will show that
\begin{align*}
\cl_{\textrm{t}}\left(\left\{ x',y\right\} \right) &=\cl_{\textrm{t}}\left(z\right) \quad\textup{and}\\
\cl_{\textrm{t}}\left(x'\right)\cap\cl_{\textrm{t}}\left(y\right) &=\cl_{\textrm{t}}\left(w'\right).
\end{align*}
Clearly, we have $\cl_{\textrm{t}}\left(\left\{ x',y\right\} \right)\subseteq\cl_{\textrm{t}}\left(z\right)$, and in addition, $\cl_{\textrm{t}}\left(z\right)=\cl_{\textrm{t}}\left(\left\{ x,y\right\} \right)\subseteq\cl_{\textrm{t}}\left(\left\{ x',y\right\} \right)$.
So, $\cl_{\textrm{t}}\left(\left\{ x',y\right\} \right)=\cl_{\textrm{t}}\left(z\right)$.
Take any $r\in\cl_{\textrm{t}}\left(x'\right)\cap\cl_{\textrm{t}}\left(y\right)$
such that $r\notin\cl_{\textrm{t}}\left(p\right)$. Then $\cl_{\textrm{t}}\left(\left\{ p,r\right\} \right)\in\mathscr{P}_{2}$
by Step~12. Since  $\cl_{\textrm{t}}\left(x\right)$ and $\cl_{\textrm{t}}\left(\left\{ p,r\right\} \right)$ are subsets of $\cl_{\textrm{t}}\left(x'\right)$,
by Step~13, there is $s\in V_{1}\left(G\right)$ such that $\cl_{\textrm{t}}\left(s\right)\subseteq\cl_{\textrm{t}}\left(\left\{ p,r\right\} \right)\cap\cl_{\textrm{t}}\left(x\right)$.
Since $p,r\in\cl_{\textrm{t}}\left(y\right)$, we have $\cl_{\textrm{t}}\left(s\right)\subseteq\cl_{\textrm{t}}\left(w\right)\subseteq\cl_{\textrm{t}}\left(w'\right)$.
In particular, $p\notin\cl_{\textrm{t}}\left(s\right)$ since
$p\notin\cl_{\textrm{t}}\left(w\right)$. Hence, $\cl_{\textrm{t}}\left(\left\{ p,s\right\} \right)\in\mathscr{P}_{2}$
by Step~12. But $\cl_{\textrm{t}}\left(\left\{ p,s\right\} \right)\subseteq\cl_{\textrm{t}}\left(\left\{ p,r\right\} \right)$,
and thus $\cl_{\textrm{t}}\left(\left\{ p,s\right\} \right)=\cl_{\textrm{t}}\left(\left\{ p,r\right\} \right)$
by Step~1. This implies $r\in\cl_{\textrm{t}}\left(\left\{ p,s\right\} \right)\subseteq\cl_{\textrm{t}}\left(w'\right)$
by the above. Since $\cl_{\textrm{t}}\left(p\right)\subseteq\cl_{\textrm{t}}\left(w'\right)$,
we have thus shown that $\cl_{\textrm{t}}\left(x'\right)\cap\cl_{\textrm{t}}\left(y\right)\subseteq\cl_{\textrm{t}}\left(w'\right)$.
On the other hand, $\cl_{\textrm{t}}\left(w'\right)=\cl_{\textrm{t}}\left(\left\{ w,p\right\} \right)\subseteq\cl_{\textrm{t}}\left(x'\right)\cap\cl_{\textrm{t}}\left(y\right)$,
hence $\cl_{\textrm{t}}\left(x'\right)\cap\cl_{\textrm{t}}\left(y\right)=\cl_{\textrm{t}}\left(w'\right)$.
Since $T\left(y\right)-T\left(w'\right)=T\left(y\right)-T\left(w\right)-1<T\left(y\right)-T\left(w\right)$,
it follows by induction that $T\left(x'\right)+T\left(y\right)=T\left(z\right)+T\left(w'\right)$
and thus $T\left(x\right)+T\left(y\right)=T\left(z\right)+T\left(w\right)$.

\smallskip Now, we introduce a projective space $\mathscr{S}_{n-1}$ following
a characterization in \cite[p.~39]{Hir} (see also \cite{Cas}). The set
of points of our projective space $\mathscr{S}_{n-1}$ will be $\mathscr{P}_{1}$.
For any $X\in\mathscr{P}_{k}$, $k\in\left\{ 0,1,\ldots,n\right\} $,
we introduce a subspace
\[
S\left(X\right)=\left\{ P\in\mathscr{P}_{1}:P\subseteq X\right\} 
\]
and define its (projective) dimension to be $\dim \left(S\left(X\right)\right)=k-1$.
Note that, by Step~9, $S$ is a bijection from the set of all target-closed
sets to the set of all subspaces of $\mathscr{P}_{1}$, and since the
sets $\mathscr{P}_{k}$ for different indices $k$ are disjoint, the
dimension is well-defined.

\begin{step}{15}
For $n\geq4$, $\mathscr{S}_{n-1}$ is a projective space isomorphic to $PG\left(n-1,q\right)$.
\end{step}

We will show that the space $\mathscr{S}_{n-1}$ satisfies the axioms for projective
space $PG\left(n-1,q\right)$ given in \cite[p.~39]{Hir}. Clearly, the set of all possible dimensions
of subspaces is $\left\{ -1,0,\ldots,n-1\right\} $. By property \ref{enu:noncomm--sg2},
there is unique subspace of dimension $-1$, and by Step~2, it is precisely
the empty set. Observe that subspaces of dimension $0$ are precisely
points. By property \ref{enu:noncomm--sg2}, there is a unique subspace
of dimension $n-1$, and by the proof of Step~8, it is precisely the whole $\mathscr{P}_{1}$.
If $S\left(X\right)\subseteq S\left(Y\right)$ then $X\subseteq Y$
by Step~9, which implies $\dim \left(S\left(X\right)\right)\leq\dim \left(S\left(Y\right)\right)$
by Step~2. Here, $S\left(X\right)=S\left(Y\right)$ if and only if
$X=Y$ which is equivalent to $\dim \left(S\left(X\right)\right)=\dim \left(S\left(Y\right)\right)$
by Step~1. The intersection of two subspaces is again a subspace.
This follows from the fact that target-closed sets are closed for
intersections and from property \ref{enu:noncomm--sg4}. Given
two subspaces $S\left(X\right)$ and $S\left(Y\right)$, the span
of $S\left(X\right)$ and $S\left(Y\right)$, i.e. the intersection
of all the subspaces containing both, is clearly $S\left(\cl_{\textrm{t}}\left(X\cup Y\right)\right)$
by Step~9. Step~14 easily implies
\[
\dim S\left(X\right)+\dim S\left(Y\right)=\dim \left(S\left(X\right)\cap S\left(Y\right)\right)+\dim S\left(\cl_{\textrm{t}}\left(X\cup Y\right)\right)
\]
because $S\left(X\right)\cap S\left(Y\right)=S\left(X\cap Y\right)$.
Finally, by Step~8, every subspace of dimension $1$ contains $\binom{2}{1}_{q}=q+1\geq3$
points.

\begin{step}{16}
For $n \leq 2$, $\mathscr{S}_{n-1}$ is a projective space isomorphic to $PG\left(n-1,q\right)$.
\end{step}

If $n=1$, then the projective space $\mathscr{S}_{n-1}$ contains
only one point, so the claim trivially holds. If $n=2$, then the projective
space $\mathscr{S}_{n-1}$ consists of one projective line, i.e. $1$-dimensional
subspace, with $q+1$ points. Hence, it is trivially isomorphic to
$PG\left(n-1,q\right)$.

\begin{step}{17}
Graph $\varTheta\left(M_{n}\left(F_{q}\right)\right)$ satisfies properties ($i$)--($v$).
\end{step}

Let $G=\varTheta(M_n(F_q))$. Properties ($i$)--($iii$) follow from the proof of Propositions~\ref{prop:degree} and \ref{prop:same--graphs--same--rings} upon noticing that
\begin{equation}\label{eq:17-1}
V_k(G)=\set{(V,W) \in V(G)}{\dim_{F_q} V=k,\ \dim_{F_q} W=n-k}.
\end{equation}
Let $X$ be set of vertices of $G$, say $X=\set{(V_\alpha,W_\alpha)}{\alpha \in A}$. It is easy to check that
\begin{align}
\cl_t(X) &=\set{(V,W) \in V(G)}{V \subseteq \bigcup_{\alpha \in A} V_\alpha} \quad \textup{and} \label{eq:17-2}\\
\cl_s(X) &=\set{(V,W) \in V(G)}{\bigcap_{\alpha \in A} W_\alpha \subseteq W}. \label{eq:17-3}
\end{align}
Hence, $\cl_t(X)=\cl_t((V',W'))$ where $V'$ is the linear span of $\bigcup_{\alpha \in A} V_\alpha$ and $W'$ is any linear subspace of $F_q^n$ such that $\dim_{F_q} V'+\dim_{F_q} W'=n$.
Similarly, $\cl_s(X)=\cl_s((V'',W''))$ where $W''=\bigcap_{\alpha \in A} W_\alpha$ and $V''$ is any linear subspace of $F_q^n$ such that $\dim_{F_q} V''+\dim_{F_q} W''=n$. This proves property ($iv$).
To prove property ($v$), let $x=(V_x,W_x)$ and $y=(V_y,W_y)$ be elements of $V_k(G)$ for some $k$ and suppose $(V,W) \in \cl_t(x) \cap \cl_s(y) \cap V_k(G)$. Equalities~\eqref{eq:17-1}, \eqref{eq:17-2} and \eqref{eq:17-3} then imply $V \subseteq V_x$ and $\dim_{F_q} V=\dim_{F_q} V_x=k$, hence $V=V_x$. Similarly we get $W=W_y$. Therefore, we clearly have
$$\cl_t(x) \cap \cl_s(y) \cap V_k(G)=\{(V_x,W_y)\},$$
which proves ($v$).

\smallskip To finish the proof, we need to show that $G$ is isomorphic
to $\varTheta\left(M_{n}\left(F_{q}\right)\right)$. By Steps~15 and
16, there is a collineation $\varphi:\mathscr{S}_{n-1}\to PG\left(n-1,q\right)$.
By \cite[\S 9 Theorem~5 and \S 8 Theorem~17]{Ben}, this collineation
extends uniquely to a lattice isomorphism $\widehat{\varphi}$ between
the lattices of subspaces of the projective spaces involved. Observe that
for any $x\in V\left(G\right)$, the element $\widehat{\varphi}\left(S\left(\cl_{\textrm{t}}\left(x\right)\right)\right)$
corresponds to vector subspace of $F_{q}^{n}$ and will be henceforth
viewed as such. With this in mind, we can define a map $\psi:V\left(G\right)\to V\left(\varTheta\left(M_{n}\left(F_{q}\right)\right)\right)$
by the rule
\[
\psi\left(x\right)=\left(\widehat{\varphi}\left(S\left(\cl_{\textrm{t}}\left(x\right)\right)\right),\widehat{\varphi}\left(S\left(\cl_{\textrm{t}}\left(x^{\textrm{op}}\right)\right)\right)\right).
\]
Observe that $T\left(x\right)+T\left(x^{\textrm{op}}\right)=n$, so
$\dim \left(S\left(\cl_{\textrm{t}}\left(x\right)\right)\right)+\dim \left(S\left(\cl_{\textrm{t}}\left(x^{\textrm{op}}\right)\right)\right)=n-2$
and hence $\dim _{F_{q}}\left(\widehat{\varphi}\left(S\left(\cl_{\textrm{t}}\left(x\right)\right)\right)\right)+\dim _{F_{q}}\left(\widehat{\varphi}\left(S\left(\cl_{\textrm{t}}\left(x^{\textrm{op}}\right)\right)\right)\right)=n$.
This shows that $\psi\left(x\right)$ is indeed a vertex of $\varTheta\left(M_{n}\left(F_{q}\right)\right)$. 

\begin{step}{18}
$\psi$ is injective.
\end{step}

Suppose $\psi\left(x\right)=\psi\left(y\right)$. The injectivity
of $\widehat{\varphi}$ implies $S\left(\cl_{\textrm{t}}\left(x\right)\right)=S\left(\cl_{\textrm{t}}\left(y\right)\right)$
and $S\left(\cl_{\textrm{t}}\left(x^{\textrm{op}}\right)\right)=S\left(\cl_{\textrm{t}}\left(y^{\textrm{op}}\right)\right)$.
By Step~9, this further implies $\cl_{\textrm{t}}\left(x\right)=\cl_{\textrm{t}}\left(y\right)$
and $\cl_{\textrm{t}}\left(x^{\textrm{op}}\right)=\cl_{\textrm{t}}\left(y^{\textrm{op}}\right)$.
By Step~7, the last equality implies $N^{+}\left(x\right)=N^{+}\left(y\right)$
and hence $\cl_{\textrm{s}}\left(x\right)=\cl_{\textrm{s}}\left(y\right)$.
In addition, $T\left(x\right)=T\left(y\right)$ by Step~2. By property
\ref{enu:noncomm--sg5}, we obtain
\[
\left\{ x\right\} =\cl_{\textrm{t}}\left(x\right)\cap\cl_{\textrm{s}}\left(x\right)\cap V_{T\left(x\right)}\left(G\right)=\cl_{\textrm{t}}\left(y\right)\cap\cl_{\textrm{s}}\left(y\right)\cap V_{T\left(y\right)}\left(G\right)=\left\{ y\right\} .
\]
Hence, $\psi$ is injective.

\begin{step}{19}
$\psi$ is surjective.
\end{step}

Let $\left(V,W\right)$ be a vertex of $\varTheta\left(M_{n}\left(F_{q}\right)\right)$.
Then $\dim _{F_{q}}V+\dim _{F_{q}}W=n$. By property
\ref{enu:noncomm--sg4}, we have $\cl_{\textrm{t}}\left(\widehat{\varphi}^{-1}\left(V\right)\right)=\cl_{\textrm{t}}\left(v\right)$
for some $v\in V\left(G\right)$. Observe that, by Step~9, 
\begin{equation}
\widehat{\varphi}^{-1}\left(V\right)=S\left(\cl_{\textrm{t}}\left(v\right)\right)\label{eq:18}
\end{equation}
 which implies that $T\left(v\right)=\dim \widehat{\varphi}^{-1}\left(V\right)+1=\dim _{F_{q}}V$
since $\widehat{\varphi}$ is a lattice isomorphism. Similarly, $\cl_{\textrm{t}}\left(\widehat{\varphi}^{-1}\left(W\right)\right)=\cl_{\textrm{t}}\left(w\right)$
where $T\left(w\right)=\dim _{F_{q}}W$. In particular, $T\left(v\right)=T\left(w^{\textrm{op}}\right)$,
hence by property \ref{enu:noncomm--sg5}, there is a vertex $x\in V\left(G\right)$
such that $\left\{ x\right\} =\cl_{\textrm{t}}\left(v\right)\cap\cl_{\textrm{s}}\left(w^{\textrm{op}}\right)\cap V_{T\left(v\right)}\left(G\right)$.
By Step~1, we have $\cl_{\textrm{t}}\left(x\right)=\cl_{\textrm{t}}\left(v\right)$
and, by equality (\ref{eq:18}), $\widehat{\varphi}\left(S\left(\cl_{\textrm{t}}\left(x\right)\right)\right)=V$.
Similarly, $\cl_{\textrm{s}}\left(x\right)=\cl_{\textrm{s}}\left(w^{\textrm{op}}\right)$.
As in the proof of Step~18, this implies that $\cl_{\textrm{t}}\left(x^{\textrm{op}}\right)=\cl_{\textrm{t}}\left(w\right)$.
As before, this implies $\widehat{\varphi}\left(S\left(\cl_{\textrm{t}}\left(x^{\textrm{op}}\right)\right)\right)=W$.
We conclude that $\psi\left(x\right)=\left(V,W\right)$, so $\psi$
is surjective.

\begin{step}{20}
$\psi$ is a graph isomorphism.
\end{step}

Let $x\rightarrow y$ be an edge in $G$. In other words, $y\in N^{+}\left(x\right)$.
Then, by Step~7, $y\in\cl_{\textrm{t}}\left(x^{\textrm{op}}\right)$,
which implies $\cl_{\textrm{t}}\left(y\right)\subseteq\cl_{\textrm{t}}\left(x^{\textrm{op}}\right)$.
Since $\widehat{\varphi}$ is a lattice isomorphism, we deduce $\widehat{\varphi}\left(S\left(\cl_{\textrm{t}}\left(y\right)\right)\right)\subseteq\widehat{\varphi}\left(S\left(\cl_{\textrm{t}}\left(x^{\textrm{op}}\right)\right)\right)$.
Hence, there is an edge $\psi\left(x\right)\rightarrow\psi\left(y\right)$
in $\varTheta\left(M_{n}\left(F_{q}\right)\right)$. On the other
hand, if there is an edge $\psi\left(x\right)\rightarrow\psi\left(y\right)$
in $\varTheta\left(M_{n}\left(F_{q}\right)\right)$, then $\widehat{\varphi}\left(S\left(\cl_{\textrm{t}}\left(y\right)\right)\right)\subseteq\widehat{\varphi}\left(S\left(\cl_{\textrm{t}}\left(x^{\textrm{op}}\right)\right)\right)$.
The fact that $\widehat{\varphi}$ is a lattice isomorphism, and Step~9 imply $\cl_{\textrm{t}}\left(y\right)\subseteq\cl_{\textrm{t}}\left(x^{\textrm{op}}\right)$.
Hence, $y\in N^{+}\left(x\right)$ by Step~7, so there is an edge $x\rightarrow y$
in $G$. Since graphs involved have no multiple edges, this shows
that $\psi$ is a graph isomorphism.
\end{proof}

\begin{remark}
If $n=3$, then the graph $\varTheta\left(M_{n}\left(F_{q}\right)\right)$
still has the properties described in Theorem~\ref{prop:noncommutative--staircase},
but it is not the only graph that satisfies those properties, essentially because there exist non-Desarguesian projective planes. Every projective plane, Desarquesian or not, induces a graph with properties described in Theorem~\ref{prop:noncommutative--staircase}.
We simply take the vertices of the graph to be pairs of projective subspaces with projective dimensions summing up to $n-2$ and define edges via inclusions in the same way as they are defined in $\varTheta\left(M_{n}\left(F_{q}\right)\right)$. Nevertheless, graph $\varTheta\left(M_{3}\left(F_{q}\right)\right)$ also has all
of the properties deduced in Steps~1--14 and we will be using this fact
in the rest of the paper.
\end{remark}

\section{Structural connections between $M_{n}\left(F_{q}\right)$ and $\varTheta\left(M_{n}\left(F_{q}\right)\right)$}\label{sec:structure}

In this section we prove two strong connections between the ring $M_{n}\left(F_{q}\right)$ and the graph $\varTheta\left(M_{n}\left(F_{q}\right)\right)$, namely
\begin{enumerate}[leftmargin=*]
\item for $n \neq 2$, every graph automorphism of $\varTheta\left(M_{n}\left(F_{q}\right)\right)$ is induced by a ring automorphism of $M_{n}\left(F_{q}\right)$ (see Theorem~\ref{thm:automorphisms}), and
\item for $n\neq 1$, the graph structure of $\varTheta\left(M_{n}\left(F_{q}\right)\right)$ uniquely determines the ring structure of $M_{n}\left(F_{q}\right)$ (see Theorem~\ref{prop:matrix--same--graph--same--ring}).
\end{enumerate}

For an automorphism $\sigma$ of a field $F$ and a matrix $X=\left(x_{ij}\right)_{i,j}\in M_{m,n}\left(F\right)$,
written in the standard basis, we denote $X^{\sigma}=\left(\sigma\left(x_{ij}\right)\right)_{i,j}$.
For a subset $\mathcal{A}\subseteq M_{m,n}\left(F\right)$, we denote
$\mathcal{A}^{\sigma}=\left\{ X^{\sigma}:X\in\mathcal{A}\right\} $.
Recall that the group of automorphisms of a finite field $F_q$, where $q=p^m$, is a cyclic group of order $m$, generated by the automorphism $\tau(\lambda)=\lambda^p$.

\begin{theorem}\label{thm:automorphisms}
Let $n\neq2$ be a positive integer and $f$ a graph automorphism
of $\varTheta\left(M_{n}\left(F_{q}\right)\right)$. Then there is
an invertible matrix $A\in M_{n}\left(F_{q}\right)$ and an automorphism
$\sigma$ of $F_{q}$ such that $f\left(\left[X\right]\right)=\left[AX^{\sigma}A^{-1}\right]$
for all $X\in M_{n}\left(F_{q}\right)$.
\end{theorem}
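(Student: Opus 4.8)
The plan is to show that every graph automorphism $f$ of $\varTheta(M_n(F_q))$ acts, through the bijective correspondence $[A]\leftrightarrow(\im A,\ker A)$ of Section~\ref{sec:matrices}, as $(V,W)\mapsto(\widehat\varphi(V),\widehat\varphi(W))$ for a single lattice automorphism $\widehat\varphi$ of the subspace lattice of $F_q^n$, and then to invoke the Fundamental Theorem of Projective Geometry to realize $\widehat\varphi$ through a semilinear map; the ring automorphism $X\mapsto AX^{\sigma}A^{-1}$ will then turn out to induce exactly $f$.

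\textbf{Preservation properties of $f$.} Since $f$ preserves indegrees and outdegrees, it preserves the partition $V(G)=\bigcup_k V_k(G)$ and the type function $T$; since it preserves $N^{+}$ and $N^{-}$ on all subsets, it preserves $\cl_{\textrm{t}}$, $\cl_{\textrm{s}}$, and hence the opposite-vertex operation, so $f(x^{\textrm{op}})=f(x)^{\textrm{op}}$. Consequently $f$ permutes the family $\mathscr{P}_1$ of points and the family $\mathscr{P}_2$ of lines while preserving incidence; here I use that Steps~1--14 of the proof of Theorem~\ref{prop:noncommutative--staircase} are valid for \emph{every} $n$, including $n=3$, by the Remark following that theorem. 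Thus $f$ induces a collineation $\varphi$ of $\mathscr{S}_{n-1}$. For $n\neq 3$ this space is isomorphic to $PG(n-1,q)$ by Steps~15--16, and for $n=3$ it is naturally $PG(2,q)$ since $G=\varTheta(M_3(F_q))$; so in every case $\varphi$ is a collineation of $PG(n-1,q)$.

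\textbf{From $\varphi$ to a lattice automorphism and to $f$.} Using Step~9 together with the fact that $f$ commutes with arbitrary unions and with $\cl_{\textrm{t}}$, one checks that $f$ sends each target-closed set to the target closure of the images of the points it contains; hence, under the identification $S$ of target-closed sets with subspaces of $F_q^n$, the action of $f$ on target-closed sets is precisely the unique lattice automorphism $\widehat\varphi$ of $\mathrm{Sub}(F_q^n)$ extending $\varphi$. Now take a vertex $x$, corresponding to a pair $(V,W)$ with $\cl_{\textrm{t}}(x)\leftrightarrow V$ and $\cl_{\textrm{t}}(x^{\textrm{op}})\leftrightarrow W$, and write $f(x)=(V^{*},W^{*})$. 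The first coordinate gives $V^{*}=\widehat\varphi(V)$; applying the same to $x^{\textrm{op}}$ and using $f(x^{\textrm{op}})=f(x)^{\textrm{op}}=(W^{*},V^{*})$ gives $W^{*}=\widehat\varphi(W)$. Hence $f\big((V,W)\big)=(\widehat\varphi(V),\widehat\varphi(W))$ for every vertex.

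\textbf{Conclusion via the FTPG.} For $n\geq 3$ we have $n-1\geq 2$, so the Fundamental Theorem of Projective Geometry furnishes $A\in GL_n(F_q)$ and an automorphism $\sigma$ of $F_q$ with $\widehat\varphi(V)=AV^{\sigma}$ for all $V\subseteq F_q^n$, where $V^{\sigma}=\{v^{\sigma}:v\in V\}$. A short computation with the description $v\mapsto X^{\sigma}v$ of the linear map $X^{\sigma}$ gives $\im(AX^{\sigma}A^{-1})=A(\im X)^{\sigma}$ and $\ker(AX^{\sigma}A^{-1})=A(\ker X)^{\sigma}$, so by functoriality the ring automorphism $g\colon X\mapsto AX^{\sigma}A^{-1}$ satisfies $\varTheta(g)\big((V,W)\big)=(AV^{\sigma},AW^{\sigma})=f\big((V,W)\big)$; therefore $f=\varTheta(g)$, i.e. $f(\left[X\right])=\left[AX^{\sigma}A^{-1}\right]$. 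The case $n=1$ is immediate, since $\varTheta(M_1(F_q))$ consists only of the vertex $\left[0\right]$ (carrying a loop) and the vertex $\left[1\right]$ (carrying none), so its only automorphism is the identity, induced by $\mathrm{id}_{M_1(F_q)}$. The main obstacle is the middle step: extracting a lattice automorphism of $\mathrm{Sub}(F_q^n)$ from the purely combinatorial data and verifying, via the opposite-vertex trick, that \emph{both} coordinates of $f(x)$ are governed by the \emph{same} $\widehat\varphi$; once that is in place the FTPG finishes the proof, and the reason $n=2$ must be excluded is exactly that every permutation of the $q+1$ points of $PG(1,q)$ is a collineation, so the FTPG—and with it the conclusion—fails there.
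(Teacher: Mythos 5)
Your proposal is correct and follows essentially the same route as the paper: extract from $f$ an inclusion-preserving action on the subspace lattice, use the opposite-vertex operation to see that both coordinates of $f((V,W))$ are governed by the same lattice automorphism, and then apply the Fundamental Theorem of Projective Geometry together with the computation of $\im(AX^{\sigma}A^{-1})$ and $\ker(AX^{\sigma}A^{-1})$. The only real difference is presentational: the paper defines $\varphi$ and $\varphi'$ directly from the two coordinates via $N^{-}$/$N^{+}$-containments and then proves $\varphi=\varphi'$, whereas you route through the $\mathscr{P}_k$/collineation machinery of Theorem~\ref{prop:noncommutative--staircase} (which is legitimate, since that machinery applies to the concrete graph even for $n=3$, as the Remark notes), a heavier but valid way to reach the same lattice automorphism.
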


\begin{proof}
Case $n=1$ is clear because $\varTheta\left(M_{1}\left(F_{q}\right)\right)$
has only one vertex with one loop. So, suppose $n\geq3$.
Let $\left(V_{1},W_{1}\right)$, $\left(V_{2},W_{2}\right)$, $\left(V_{1}',W_{1}'\right)$,
$\left(V_{2}',W_{2}'\right)$ be vertices of $\varTheta\left(M_{n}\left(F_{q}\right)\right)$
such that $f\left(\left(V_{1},W_{1}\right)\right)=\left(V_{1}',W_{1}'\right)$
and $f\left(\left(V_{2},W_{2}\right)\right)=\left(V_{2}',W_{2}'\right)$.
Observe that $V_{1}\subseteq V_{2}$ if and only if $V_{1}'\subseteq V_{2}'$,
and $V_{1}=V_{2}$ if and only if $V_{1}'=V_{2}'$. Indeed, by definition
of edges, inclusion $V_{1}\subseteq V_{2}$ is equivalent to $N^{-}\left(\left(V_{1},W_{1}\right)\right)\supseteq N^{-}\left(\left(V_{2},W_{2}\right)\right)$
which is further equivalent to $N^{-}\left(\left(V_{1}',W_{1}'\right)\right)\supseteq N^{-}\left(\left(V_{2}',W_{2}'\right)\right)$,
i.e. $V_{1}'\subseteq V_{2}'$. Equivalence of equalities is proved
similiarly. In the same manner, it can be shown that $W_{1}=W_{2}$
if and only if $W_{1}'=W_{2}'$. 

This allows us to define maps $\varphi,\varphi':L \to L$,
where $L$ is the lattice of subspaces of the projective space $PG\left(n-1,q\right)$,
by the condition
\begin{equation}
f\left(\left(V,W\right)\right)=\left(\varphi\left(V\right),\varphi'\left(W\right)\right)\label{eq:2}
\end{equation}
for all $\left(V,W\right)\in V\left(\varTheta\left(M_{n}\left(F_{q}\right)\right)\right)$.
It is clear that $\varphi$ is a bijective map. Also, observe that,
by the above, we have that $V_{1}\subseteq V_{2}$ if and only if
$\varphi\left(V_{1}\right)\subseteq\varphi\left(V_{2}\right)$. Hence,
$\varphi$ is a lattice isomorphism. By the Fundamental theorem of
projective geometry, $\varphi$ is induced by a bijective semilinear
map $\mathcal{S}$ on $F_{q}^{n}$. Let $\sigma$ be the corresponding automorphism
of the field $F_{q}$. We now prove that $\varphi=\varphi'$. We borrow
the notion of the opposite vertex from the proof of Theorem~\ref{prop:noncommutative--staircase}.
Observe that opposite vertex is defined in terms of neighbourhoods
and degrees, hence the relation of being opposite is preserved by
graph automorphism. From the proof of Proposition~\ref{prop:same--graphs--same--rings},
it follows that $\left(V,W\right)\in V_{k}\left(\varTheta\left(M_{n}\left(F_{q}\right)\right)\right)$,
where $k=\dim V$. Hence, it is easy to see that $\left(V,W\right)^{\textrm{op}}=\left(W,V\right)$.
Therefore,
\begin{align*}
\left(\varphi\left(V\right),\varphi'\left(W\right)\right) &= f\left(\left(V,W\right)\right)=f\left(\left(W,V\right)^{\textrm{op}}\right)=\\
&= f\left(\left(W,V\right)\right)^{\textrm{op}}=\left(\varphi\left(W\right),\varphi'\left(V\right)\right)^{\textrm{op}}=\left(\varphi'\left(V\right),\varphi\left(W\right)\right),
\end{align*}
which shows that $\varphi=\varphi'$.

Now, define a map $\mathcal{L}:F_{q}^{n} \to F_{q}^{n}$ by
$\mathcal{L}\left(x\right)=\mathcal{S}\left(x^{\sigma^{-1}}\right)$, where
$x \in F_q^n$ is written in the standard basis. It is easy to see that $\mathcal{L}$
is a bijective linear map. Let $A$ be its matrix in the standard
basis. Then $A$ is an invertible matrix and $\mathcal{S}\left(x\right)=Ax^{\sigma}$ for all $x \in F_q^n$.
Let $\left[X\right]=\left(V,W\right)$ be a vertex of $\varTheta\left(M_{n}\left(F_{q}\right)\right)$,
where $X\in M_{n}\left(F_{q}\right)$, $V=\im X$ and $W=\ker X$.
By the above, we have
\[
\varphi\left(V\right)=\left\{ \mathcal{S}\left(v\right):v\in V\right\} =\left\{ Av^{\sigma}:v\in V\right\} .
\]
On the other hand, $\im\left(AX^{\sigma}A^{-1}\right)=A\cdot\im\left(X^{\sigma}\right) = A \cdot\left(\im X\right)^{\sigma}$
since $\sigma$ is a field automorphism. Hence, $\varphi\left(V\right)=\im\left(AX^{\sigma}A^{-1}\right)$.
Similarly, $\varphi'\left(W\right)=\varphi\left(W\right)=\left\{ Aw^{\sigma}:w\in W\right\} $
and $\ker\left(AX^{\sigma}A^{-1}\right)=A\cdot\ker\left(X^{\sigma}\right)=A\cdot\left(\ker X\right)^{\sigma}=\varphi'\left(W\right)$.
By (\ref{eq:2}), we thus have
\[
f\left(\left[X\right]\right)=\left(\im\left(AX^{\sigma}A^{-1}\right),\ker\left(AX^{\sigma}A^{-1}\right)\right)=\left[AX^{\sigma}A^{-1}\right].
\]
\end{proof}

For $n=2$, Theorem~\ref{thm:automorphisms} fails. Observe that vertices in $\varTheta(M_2(F_q))$ distinct from $(0,F_q^2)$ and $(F_q^2,0)$ are all of the form $(V,W)$, where $V$ and $W$ are 1-dimensional subspaces of $F_q^2$. Let $\pi$ be any permutation of 1-dimensional subspaces of $F_q^2$. Then the map $\varphi$ defined by
\begin{align*}
\varphi((0,F_q^2)) &=(0,F_q^2),\\
\varphi((F_q^2,0)) &=(F_q^2,0),\\
\varphi((V,W)) &= (\pi(V),\pi(W)), \quad \dim_{F_q} V=\dim_{F_q} W=1,
\end{align*}
is a graph automorphism of $\varTheta(M_2(F_q))$. However, not all such maps are of the form as described in Theorem~\ref{thm:automorphisms}.

We remark that for the compressed zero-divisor graph $\Gamma_E$, the group of automorphisms of $\Gamma_E(M_2(F_q))$ was described in \cite{Ma-Wan-Zho}.
 
\begin{theorem}\label{prop:matrix--same--graph--same--ring}
If $\varTheta\left(R\right)\cong\varTheta\left(M_{n}\left(F_{q}\right)\right)$
where $n \neq 1$ and $R$ is a finite unital ring, then $R\cong M_{n}\left(F_{q}\right)$.
\end{theorem}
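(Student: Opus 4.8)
The plan is to prove that $R$ is forced to be a simple finite ring, so that $R\cong M_k(E)$ for some finite field $E$ by the Artin--Wedderburn theorem (finite division rings being fields), and then to apply Proposition~\ref{prop:same--graphs--same--rings} to the isomorphism $\varTheta(M_n(F_q))\cong\varTheta(R)\cong\varTheta(M_k(E))$, whose left-hand side has index $n>1$, to conclude $n=k$ and $F_q\cong E$, hence $R\cong M_n(F_q)$. Since $n=1$ is excluded, assume throughout that $n\geq 2$. I would first establish that $R$ is directly indecomposable: if $R\cong R_1\times R_2$, then $\varTheta(M_n(F_q))\cong\varTheta(R)\cong\varTheta(R_1)\times\varTheta(R_2)$ by Proposition~\ref{prop:noncommutative--preserves--products}, so Proposition~\ref{prop:noncommutative--preproduct} yields a ring decomposition $M_n(F_q)=K_1\times K_2$ with $\varTheta(K_i)\cong\varTheta(R_i)$; as $M_n(F_q)$ is simple, one factor, say $K_1$, is zero, hence $\varTheta(R_1)\cong\varTheta(0)$ has a single vertex, which forces $1\sim 0$ in $R_1$ and therefore $R_1=0$.

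The next step is to read the one-sided ideal structure of $R$ off the graph. All of Steps~1--14 in the proof of Theorem~\ref{prop:noncommutative--staircase} are purely graph-theoretic and so apply to $\varTheta(R)$; from them one extracts a projective space $\mathscr{S}_{n-1}$ (a projective plane when $n=3$), and since this extraction depends only on the isomorphism type of the graph and $\varTheta(R)\cong\varTheta(M_n(F_q))$, the space $\mathscr{S}_{n-1}$ is isomorphic to the one obtained from $\varTheta(M_n(F_q))$, namely the Desarguesian $PG(n-1,q)$. Unwinding the definitions of $N^{\pm}$ in terms of annihilators gives $\cl_{\textrm t}([a])=\{[c]:c\in\ann_r(\ann_\ell(a))\}$ and dually for $\cl_{\textrm s}$; combined with property~\ref{enu:noncomm--sg4}, this identifies the lattice of ``double-annihilator'' right (resp. left) ideals of $R$ with the subspace lattice of $F_q^n$, a complemented modular lattice of length $n$, while property~\ref{enu:noncomm--sg5} records precisely how each $\sim$-class of $R$ sits inside the resulting pair of lattices.

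The main obstacle is to upgrade this rigidity to the conclusion $J(R)=0$. The route I would pursue is to show that $R$ is von Neumann regular: using that the lattice of double-annihilator right ideals is complemented and modular, together with the ``opposite vertex'' bijection from Step~6 of the proof of Theorem~\ref{prop:noncommutative--staircase} (which for $\varTheta(M_n(F_q))$ is $(V,W)\mapsto(W,V)$ and produces, inside $\cl_{\textrm t}([a])\cap\cl_{\textrm s}([a])$, a canonical vertex behaving like an idempotent generator of $aR$), one aims to realize every principal right ideal of $R$ as a direct summand. A finite von Neumann regular ring has zero Jacobson radical and is therefore semisimple, and being directly indecomposable it is simple, so $R\cong M_k(E)$ and the theorem follows as in the first paragraph. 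I expect this last step to be the genuinely difficult one: the graph only remembers the zero-product relation, so manufacturing honest idempotents of $R$ --- and verifying that the lattice isomorphisms furnished by the previous step are compatible with multiplication and not merely with annihilation --- is exactly where the absence of commutativity must be handled with care, and it may require combinatorial input beyond Steps~1--14.
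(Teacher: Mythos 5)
Your overall architecture (show $R$ is simple, invoke Artin--Wedderburn plus Wedderburn's little theorem, then use Proposition~\ref{prop:same--graphs--same--rings} to pin down $n$ and $q$) matches the paper's endgame, and your direct-indecomposability step via Propositions~\ref{prop:noncommutative--preserves--products} and \ref{prop:noncommutative--preproduct} is correct. But the proposal has a genuine gap exactly where you flag it: the claim that $R$ is von Neumann regular (equivalently $J(R)=0$) is never proved, only hoped for. The observation that the double-annihilator right (resp.\ left) ideals form a lattice isomorphic to the subspace lattice of $F_q^n$ does not by itself produce idempotents or control the Jacobson radical: annihilator lattices can be very well behaved in rings that are far from regular, because the graph records only the zero-product relation and says nothing a priori about how $\sim$-classes multiply. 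The ``opposite vertex'' of Step~6 is a vertex of the graph, not an element of $R$ with any identified multiplicative property, so there is no mechanism in your outline for turning it into an idempotent generator of $aR$. Since ruling out a nonzero radical is the entire content of the theorem beyond bookkeeping, the proposal as written does not constitute a proof.

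The paper bridges precisely this gap by a different and more elementary device, which you may find instructive to compare. For $x\in R$ it sets $U_x=\{r\in R:[r]\in\cl_{\mathrm{s}}([x])\}$ and observes $U_x=\ann_\ell(\widehat{x})$ for any representative $\widehat{x}$ of $[x]^{\mathrm{op}}$, so $U_x$ is a left ideal containing $Rx$. For $a$ with $[a]\in V_1(\varTheta(R))$ it then builds a map $\tau:\mathscr{P}_1\to\cl_{\mathrm{s}}([a])\cap V_1(\varTheta(R))$ sending $\cl_{\mathrm{t}}([r])$ to $[r'a]$ for a suitable $r'$ with $r'a\neq 0$; injectivity plus the count $|\mathscr{P}_1|=\binom{n}{1}_q=|\cl_{\mathrm{s}}([a])\cap V_1(\varTheta(R))|$ forces surjectivity, whence every class in $\cl_{\mathrm{s}}([a])$ meets (and hence lies in) $Ra$, giving $U_a=Ra$. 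From this the paper proves directly that $R$ is \emph{prime}: given $x,y\neq 0$, a degree count produces $r$ with $[r]\in V_1$ and $ry\neq 0$, then $U_{ry}=Rry$ and a dual argument produce $s\in Rry$ with $xs\neq 0$, so $xRy\neq 0$. A finite prime ring is simple artinian, so $R\cong M_{n'}(F_{q'})$, and Proposition~\ref{prop:same--graphs--same--rings} finishes as in your plan; note that primeness makes your indecomposability step unnecessary. If you want to salvage your route, the missing ingredient is some argument of this kind that ties a one-sided ideal read off the graph (such as $U_a$) back to a principal one-sided ideal of $R$; without it, semisimplicity does not follow from the lattice data alone.
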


\begin{proof}
We will be using some notions introduced in the proof of Theorem~\ref{prop:noncommutative--staircase}
and some results obtained there. For each $x\in R$, define $U_{x}=\left\{ r\in R:\left[r\right]\in\cl_{\textrm{s}}\left(\left[x\right]\right)\right\} $.
Choose any representative $\widehat{x}\in R$ of the class $\left[x\right]^{\textrm{op}}$.
Since $\cl_{\textrm{s}}\left(\left[x\right]\right)=N^{-}\left(\left[\widehat{x}\right]\right)$
by Step~9 of the proof of Theorem~\ref{prop:noncommutative--staircase},
it follows that $U_{x}=\ann_{\ell}\left(\widehat{x}\right)$.
In particular, $U_{x}$ is a left ideal in $R$, hence $Rx\subseteq U_{x}$.
We will show that for every $a\in R$ such that $\left[a\right]\in V_{1}\left(\varTheta\left(R\right)\right)$,
we have in fact $U_{a}=Ra$. So, fix $a\in R$ such that $\left[a\right]\in V_{1}\left(\varTheta\left(R\right)\right)$.
Observe that, by Proposition~\ref{prop:degree}, we have $\left(V,W\right)\in V_{1}\left(\varTheta\left(M_{n}\left(F_{q}\right)\right)\right)$
if and only if $\dim _{F_{q}}V=1$. In addition, it is easy
to verify that for such a vertex, we have $N^{+}\left(\cl_{\textrm{t}}\left(\left(V,W\right)\right)\right)=\left\{ \left[0\right]\right\} $
since 
\begin{align*}
\cl_{\textrm{t}}\left(\left(V,W\right)\right) &= N^{+}\left(\left(V,W\right)^{\textrm{op}}\right)=N^{+}\left(\left(W,V\right)\right)=\\
&= \left\{ \left(X,Y\right):X\subseteq V,\dim _{F_{q}}Y=n-\dim _{F_{q}}X\right\} .
\end{align*}
Consequently, if $\left[r\right]\in V_{1}\left(\varTheta\left(R\right)\right)$,
then $N^{+}\left(\cl_{\textrm{t}}\left(\left[r\right]\right)\right)=\left\{ \left[0\right]\right\} \not\ni\left[a\right]$.
Hence, there exists $\left[r'\right]\in\cl_{\textrm{t}}\left(\left[r\right]\right)$
such that $r'a\neq0$. So, for each $\cl_{\textrm{t}}\left(\left[r\right]\right)\in\mathscr{P}_{1}$,
we can fix such an $r'$. Since $U_{a}$ is a left ideal, we have
$\left[r'a\right]\in\cl_{\textrm{s}}\left(\left[a\right]\right)$.
In a similar way we can show that $\left[r'a\right]\in\cl_{\textrm{t}}\left(\left[r\right]\right)$.
In particular, this implies that $\left[r'a\right]\in V_{1}\left(\varTheta\left(R\right)\right)$
by Step~2 of the proof of Theorem~\ref{prop:noncommutative--staircase}.
Hence, $\left[r'a\right]\in\cl_{\textrm{s}}\left(\left[a\right]\right)\cap V_{1}\left(\varTheta\left(R\right)\right)$.
Therefore, we can define a map
\[
\tau:\mathscr{P}_{1} \to \cl_{\textrm{s}}\left(\left[a\right]\right)\cap V_{1}\left(\varTheta\left(R\right)\right)
\]
by setting $\tau\left(\cl_{\textrm{t}}\left(\left[r\right]\right)\right)=\left[r'a\right]$, where $\cl_{\textrm{t}}\left(\left[r\right]\right) \in \mathscr{P}_{1}$.
Since $\left[r'a\right]\in\cl_{\textrm{t}}\left(\left[r\right]\right)$
and $\left[r'a\right]\in V_{1}\left(\varTheta\left(R\right)\right)$,
we have $\cl_{\textrm{t}}\left(\left[r'a\right]\right)=\cl_{\textrm{t}}\left(\left[r\right]\right)$ by Step 1 of the proof of Theorem~\ref{prop:noncommutative--staircase}.
This easily implies that $\tau$ is injective. Observe that, by the
proof of Theorem~\ref{prop:noncommutative--staircase}, we have $\left|\cl_{\textrm{s}}\left(\left[a\right]\right)\cap V_{1}\left(\varTheta\left(R\right)\right)\right|=\left|N^{-}\left(\left[a\right]^{\textrm{op}}\right)\cap V_{1}\left(\varTheta\left(R\right)\right)\right|=\binom{n}{1}_{q}=\left|\mathscr{P}_{1}\right|$.
Thus, $\tau$ is also surjective, which implies that $U_{a}\subseteq Ra$
since $\cl_{\textrm{s}}\left(\left[a\right]\right)=\left(\cl_{\textrm{s}}\left(\left[a\right]\right)\cap V_{1}\left(\varTheta\left(R\right)\right)\right)\cup\left\{ \left[0\right]\right\} $.
We conclude that $U_{a}=Ra$.

Next, we show that $R$ is a prime ring. Let $x,y\neq0$ be elements
of $R$, where $\left[y\right]\in V_{j}\left(\varTheta\left(R\right)\right)$,
$j>0$. Then, $\left|N^{-}\left(\left[y\right]\right)\cap V_{1}\left(\varTheta\left(R\right)\right)\right|=\binom{n}{1}_{q}\binom{n-j}{1}_{q}<\binom{n}{1}_{q}^{2}=\left|V_{1}\left(\varTheta\left(R\right)\right)\right|$,
hence there exists $r\in R$ such that $\left[r\right]\in V_{1}\left(\varTheta\left(R\right)\right)$
and $ry\neq0$. It follows that $\left[ry\right]\in V_{1}\left(\varTheta\left(R\right)\right)$.
So, by the above, $U_{ry}=Rry$. Similar argument as
before shows that $N^{-}\left(\cl_{\textrm{s}}\left(\left[ry\right]\right)\right)=\left\{ \left[0\right]\right\} \not\ni\left[x\right]$,
hence there exists $\left[s\right]\in\cl_{\textrm{s}}\left(\left[ry\right]\right)$
such that $xs\neq0$. This implies that $s\in U_{ry}=Rry$.
We conclude that $xRy\neq0$, which shows that $R$ is a prime ring.
As a finite ring, $R$ is also artinian, so it follows from Wedderburn-Artin
Theorem that $R$ is isomorphic to $M_{n'}\left(D\right)$ for some
division ring $D$ and nonnegative integer $n'$. Clearly, $D$ has to be finite, hence it is commutative
by Wedderburn's little theorem. Any finite commutative division ring is isomorphic
to some Galois field. Hence, $R\cong M_{n'}\left(F_{q'}\right)$ for
some prime power $q'$. By Proposition~\ref{prop:same--graphs--same--rings},
we have $n'=n$ and $q'=q$.
\end{proof}

We remark that a version of Theorem~\ref{prop:matrix--same--graph--same--ring} for the uncompressed zero-divisor graph $\Gamma(R)$ was proved in \cite{Akb-Moh-2} and generalized to matrix rings over commutative rings in \cite{Akb-Moh-3}.

As a corollary to Theorems~\ref{prop:noncommutative--staircase} and \ref{prop:matrix--same--graph--same--ring} we obtain a graph-theoretic characterization of finite semisimple rings.

\begin{theorem}\label{thm:semisimple-iff}
A finite unital ring $R$ is semisimple
if and only if 
\[ \varTheta\left(R\right)\cong\prod_{i=1}^{m}\varTheta\left(M_{n_{i}}\left(F_{q_{i}}\right)\right) \]
 for some positive integers $n_{i}$ and prime powers $q_{i}$.
\end{theorem}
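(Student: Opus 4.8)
The plan is to prove the two implications separately. The forward direction is essentially Artin--Wedderburn theory combined with the fact that $\varTheta$ preserves finite products (Proposition~\ref{prop:noncommutative--preserves--products}), while for the converse I would peel off the factors one at a time using Proposition~\ref{prop:noncommutative--preproduct} and then identify each factor with the help of Theorem~\ref{prop:matrix--same--graph--same--ring}.

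First I would do the ``only if'' direction. If $R$ is semisimple, then by the Artin--Wedderburn theorem $R$ is a finite direct product of matrix rings over division rings; since $R$ is finite, each of these division rings is finite, hence a field by Wedderburn's little theorem. Thus $R \cong \prod_{i=1}^m M_{n_i}(F_{q_i})$ for suitable $n_i$ and $q_i$, and applying Proposition~\ref{prop:noncommutative--preserves--products} we obtain $\varTheta(R) \cong \prod_{i=1}^m \varTheta(M_{n_i}(F_{q_i}))$.

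For the ``if'' direction I would argue by induction on $m$ that the hypothesis forces a ring decomposition $R \cong \prod_{i=1}^m K_i$ with $\varTheta(K_i) \cong \varTheta(M_{n_i}(F_{q_i}))$; the case $m=1$ is trivial. For the inductive step, set $L = \prod_{i=2}^m M_{n_i}(F_{q_i})$; by the ``only if'' direction already proved, $\varTheta(L) \cong \prod_{i=2}^m \varTheta(M_{n_i}(F_{q_i}))$, so $\varTheta(R) \cong \varTheta(M_{n_1}(F_{q_1})) \times \varTheta(L)$, and Proposition~\ref{prop:noncommutative--preproduct} splits $R$ as $K_1 \times K'$ with $\varTheta(K_1) \cong \varTheta(M_{n_1}(F_{q_1}))$ and $\varTheta(K') \cong \prod_{i=2}^m \varTheta(M_{n_i}(F_{q_i}))$, to which the induction hypothesis applies. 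It then suffices to show that each $K_i$ is a matrix ring over a finite field, for then $R$ is such a finite product and hence semisimple.

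Finally I would identify $K_i$. If $n_i \neq 1$, then Theorem~\ref{prop:matrix--same--graph--same--ring} gives $K_i \cong M_{n_i}(F_{q_i})$ directly. The remaining case $n_i = 1$ is the only one not covered by the earlier results, and is the point that needs a small separate argument, because $\varTheta(F_q)$ is the same two-vertex graph for every prime power $q$ (a loop on $[0]$ together with the two arrows between $[0]$ and $[1]$), so it cannot recover $q$. Here I would note that $\varTheta(K_i) \cong \varTheta(F_{q_i})$ has exactly two vertices, hence $K_i \neq 0$ and, since $[0] = \{0\}$ in any ring, the only $\sim$-classes of $K_i$ are $[0]$ and $[1]$; consequently every nonzero element of $K_i$ is a unit, so $K_i$ is a finite division ring, and therefore a finite field by Wedderburn's little theorem (though possibly not isomorphic to $F_{q_i}$). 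In particular $K_i \cong M_1(K_i)$ is a matrix ring over a finite field, which completes the argument. The only genuinely delicate point is thus this $n_i = 1$ degeneracy; everything else is a direct assembly of Propositions~\ref{prop:noncommutative--preserves--products} and \ref{prop:noncommutative--preproduct} and Theorem~\ref{prop:matrix--same--graph--same--ring}.
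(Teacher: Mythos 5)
Your proposal is correct and follows essentially the same route as the paper: the forward direction via Artin--Wedderburn plus Proposition~\ref{prop:noncommutative--preserves--products}, and the converse by inductively splitting $R$ with Proposition~\ref{prop:noncommutative--preproduct}, invoking Theorem~\ref{prop:matrix--same--graph--same--ring} when $n_i>1$, and handling $n_i=1$ exactly as the paper does (two vertices force all nonzero elements to be units, so the factor is a finite field, not necessarily $F_{q_i}$). No gaps.
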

 
\begin{proof}
If $R$ is semisimple, then it is a direct product of matrix rings
over finite division rings. Since every finite division ring is a
field, the conclusion follows from Proposition~\ref{prop:noncommutative--preserves--products}.

Conversely, suppose $\varTheta\left(R\right)\cong\prod_{i=1}^{m}\varTheta\left(M_{n_{i}}\left(F_{q_{i}}\right)\right)$.
By Proposition~\ref{prop:noncommutative--preserves--products}, we
have $\varTheta\left(R\right)\cong\varTheta\left(M_{n_{1}}\left(F_{q_{1}}\right)\right)\times\varTheta\left(\prod_{i=2}^{m}M_{n_{i}}\left(F_{q_{i}}\right)\right)$.
Proposition~\ref{prop:noncommutative--preproduct} implies that $R\cong R_{1}\times R_{1}'$,
where $\varTheta\left(R_{1}\right)\cong\varTheta\left(M_{n_{1}}\left(F_{q_{1}}\right)\right)$
and $\varTheta\left(R_{1}'\right)\cong\prod_{i=2}^{n}\varTheta\left(M_{n_{i}}\left(F_{q_{i}}\right)\right)$.
By induction we get $R\cong\prod_{i=1}^{n}R_{i}$, where $\varTheta\left(R_{i}\right)\cong\varTheta\left(M_{n_{i}}\left(F_{q_{i}}\right)\right)$.
If $n_{i}=1$ for some $i\in\left\{ 1,2,\ldots,m\right\} $, then
$\varTheta\left(R_{i}\right)$ has only two vertices, so every nonzero
element of $R_{i}$ is invertible. Thus, being finite, $R_{i}$ is
a field. If $n_{i}>1$ for some $i\in\left\{ 1,2,\ldots,m\right\} $,
then $R_{i}\cong M_{n_{i}}\left(F_{q_{i}}\right)$ by Theorem~\ref{prop:matrix--same--graph--same--ring}. Hence, $R$ is semisimple.
\end{proof}

\begin{theorem}\label{thm:iso-semisimple}
Let $R$ and $S$ be finite unital rings, where $S$ is semisimple
and has no nonzero homomorphic image isomorphic to a field. If $\varTheta\left(R\right)\cong\varTheta\left(S\right)$,
then $R\cong S$.
\end{theorem}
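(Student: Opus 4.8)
The plan is to reduce the statement to the matrix-ring results already established. First I would invoke the Wedderburn--Artin theorem, together with Wedderburn's little theorem (finite division rings are commutative), to write $S \cong \prod_{i=1}^{m} M_{n_{i}}\left(F_{q_{i}}\right)$ for suitable positive integers $n_{i}$ and prime powers $q_{i}$. The elementary but crucial observation is that the nonzero homomorphic images of $S$ are, up to isomorphism, exactly the partial products $\prod_{i \in T} M_{n_{i}}\left(F_{q_{i}}\right)$ with $\emptyset \neq T \subseteq \{1, \dots, m\}$, and such a ring is a field if and only if $T = \{i\}$ with $n_{i} = 1$. Hence the hypothesis on $S$ is equivalent to the statement that $n_{i} \geq 2$ for every $i$.

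Second, I would transport the product decomposition of $S$ across the graph isomorphism. By Proposition~\ref{prop:noncommutative--preserves--products} we have $\varTheta(R) \cong \varTheta(S) \cong \prod_{i=1}^{m} \varTheta\left(M_{n_{i}}\left(F_{q_{i}}\right)\right)$. Peeling off one tensor factor at a time and applying Propositions~\ref{prop:noncommutative--preserves--products} and \ref{prop:noncommutative--preproduct}, exactly as in the proof of Theorem~\ref{thm:semisimple-iff}, I would obtain a ring decomposition $R \cong \prod_{i=1}^{m} R_{i}$ with $\varTheta(R_{i}) \cong \varTheta\left(M_{n_{i}}\left(F_{q_{i}}\right)\right)$ for each $i$.

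Finally, since $n_{i} \neq 1$ for every $i$, Theorem~\ref{prop:matrix--same--graph--same--ring} applies to each factor and gives $R_{i} \cong M_{n_{i}}\left(F_{q_{i}}\right)$; taking the direct product over $i$ yields $R \cong \prod_{i=1}^{m} M_{n_{i}}\left(F_{q_{i}}\right) \cong S$. The only place the hypothesis on $S$ is actually used is this last step, and it is also where the statement could fail without it: if some $n_{i}$ equalled $1$, then $\varTheta(R_{i})$ would be the two-vertex graph $\varTheta\left(F_{q_{i}}\right)$, which forces $R_{i}$ to be a finite field but does not determine its cardinality. So I do not expect a genuine obstacle here --- the substantive work is already contained in Theorems~\ref{prop:noncommutative--staircase}, \ref{prop:matrix--same--graph--same--ring} and \ref{thm:semisimple-iff}; the remaining task is the bookkeeping above together with a careful check that ``no field quotient'' is precisely the condition that removes every matrix block of size one.
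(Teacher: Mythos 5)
Your proposal is correct and follows essentially the same route as the paper: the hypothesis on $S$ is translated via Wedderburn--Artin and Wedderburn's little theorem into $S\cong\prod_{i=1}^{m}M_{n_i}(F_{q_i})$ with every $n_i>1$, the decomposition $R\cong\prod_{i=1}^{m}R_i$ with $\varTheta(R_i)\cong\varTheta(M_{n_i}(F_{q_i}))$ is obtained exactly as in the proof of Theorem~\ref{thm:semisimple-iff} using Propositions~\ref{prop:noncommutative--preserves--products} and \ref{prop:noncommutative--preproduct}, and Theorem~\ref{prop:matrix--same--graph--same--ring} finishes each factor. Your added remark identifying ``no field quotient'' with the absence of $1\times 1$ blocks is exactly the reduction the paper makes implicitly, so there is nothing to correct.
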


\begin{proof}
The assumptions imply $S\cong\prod_{i=1}^{m}M_{n_{i}}\left(F_{q_{i}}\right)$,
where $n_{i}>1$ for all $i\in\left\{ 1,2,\ldots,m\right\} $. Hence, the
conclusion follows from the proof of Theorem~\ref{thm:semisimple-iff}.
\end{proof}

\section{Infinite rings}\label{sec:inf}

In analogy with the commutative case (see \cite[Definition~6.1]{Dju-Jev-Sto}), the following is a possible extension of Definition~\ref{def:theta} to infinite rings.

\begin{definition}\label{def:inf}
For an arbitrary unital ring $R$, $\varTheta(R)$ is a graph whose vertex set is the set of equivalence classes of elements of $R$, where two elements $a,b \in R$ are equivalent if and only if $aR=bR$ and $Ra=Rb$. There is a directed edge $[x] \to [y]$ in $\varTheta(R)$ ($[x]$ and $[y]$ not necessarily distinct) if and only if $xy=0$.
\end{definition}

Proposition~\ref{prop:aux} and its left-hand sided version ensures that for finite rings (and even for artinian rings) Definitions~\ref{def:inf} is equivalent to Definition~\ref{def:inf}.

\section{Acknowledgements}
This research was supported by the Slovenian Research Agency, project number BI-BA/16-17-025.

\end{document}